\newcommand{\enorm}[1]{{\left\vert\kern-0.25ex\left\vert\kern-0.25ex\left\vert #1 
  \right\vert\kern-0.25ex\right\vert\kern-0.25ex\right\vert}}
 \newtheorem{remark}{Remark}[section]
\begin{document}
\title{Robust discretization of flow in \\ fractured porous media}
\author{Wietse M. Boon\footnotemark[2] \and Jan M. Nordbotten\footnotemark[2]\ \footnotemark[3] \and Ivan Yotov \footnotemark[4]}

\maketitle

\renewcommand{\thefootnote}{\fnsymbol{footnote}}
\footnotetext[2]{Department of Mathematics, University of Bergen, Postbox 7803, 5020 Bergen, Norway (\email{wietse.boon@uib.no}, \email{jan.nordbotten@uib.no}) This work was supported in part by Norwegian Research Council grants 233736 and 228832.}
\footnotetext[3]{Department of Civil and Environmental Engineering, Princeton University, Princeton, NJ 08544, USA.}
\footnotetext[4]{Department of Mathematics, University of Pittsburgh, Pittsburgh, PA, USA.}
\renewcommand{\thefootnote}{\arabic{footnote}}

\begin{abstract}
	Flow in fractured porous media represents a challenge for discretization methods due to the disparate scales and complex geometry. Herein we propose a new discretization, based on the mixed finite element method and mortar methods. Our formulation is novel in that it employs the normal fluxes as the mortar variable within the mixed finite element framework, resulting in a formulation that couples the flow in the fractures with the surrounding domain with a strong notion of mass conservation. The proposed discretization handles complex, non-matching grids, and allows for fracture intersections and termination in a natural way, as well as spatially varying apertures. The discretization is applicable to both two and three spatial dimensions. A priori analysis shows the method to be optimally convergent with respect to the chosen mixed finite element spaces, which is sustained by numerical examples.
\end{abstract}
\begin{keywords}
	mixed finite element, mortar finite element, fracture flow
\end{keywords}

\pagestyle{myheadings}
\thispagestyle{plain}
\markboth{W. M. Boon, J. M. Nordbotten and I. Yotov}{ROBUST DISCRETIZATION OF FLOW IN FRACTURED POROUS MEDIA}

\section{Introduction} 
\label{sec:introduction}

	Fractures are ubiquitous in natural rocks, and in many cases have a leading order impact on the structure of fluid flow \cite{adler1999fractures,Helmig}. Due to great differences in permeability, the fractures may either conduct the flow or act as blocking features. Due to their significant impact, detailed and robust modeling of coupled flow between fractures and a permeable rock is essential in applications spanning from enhanced geothermal systems, to CO$_2$ storage and petroleum extraction. 

	Because of the complex structure of natural fracture networks \cite{Helmig}, it remains a challenge to provide robust and flexible discretization methods. Here, we identify a few distinct features which are attractive from the perspective of applications. The method formulated in this work is specifically designed to meet these goals.

	First, we emphasize the importance of mass conservative discretizations. This is of particular significance when the flow field is coupled to transport (of e.g. heat or composition), as transport schemes are typically very sensitive to non-conservative flow fields \cite{levequeFVM}. 
	The second property of interest is grid flexibility. This is important both in order to accommodate the structure of the fracture network, but also in order to honor other properties of the problem, such as material heterogeneities or anthropogenic features such as wells \cite{mallisongridding}.
	Third, it is necessary that discretization methods are robust in the physically relevant limits. In the case of fractures, it is imperative to allow for arbitrarily large aspect ratios, that is to say, thin fractures with arbitrarily small apertures, including the aperture going to zero as fractures terminate.
	Finally, our interest is in provably stable and convergent methods. 


	Since their aspect ratios frequently range as high as 100-1000, it is appealing to consider fractures as lower-dimensional features, as was first explored in \cite{alboin1999,alboin2002modeling}. In this setting, we consider a three-dimensional domain of permeable rock, within which (multiple) fractures will be represented by (multiple) two-dimensional manifolds. In the case where two or more fractures intersect, we will naturally also be interested in the intersection lines and points. Our approach handles such manifolds, lines, and points in a unified manner.

	Several methods have been proposed to discretize fractured porous media, some of which are reviewed below. However, to our knowledge, no method has been presented which fulfills the four design goals outlined above. 

	A natural discretization approach to obtain conservative discretizations is to consider finite volume methods adapted to fracture networks (see e.g. \cite{karimi2004,Sandve2}). Here, the fractures are added as hybrid cells between the matrix cells. The small cells which are formed at the intersections are then excluded with the use of transformations in order to save condition numbers and computational cost. However, the formulation requires the grids to match along the fractures. The incorporation of non-matching grids along faults was analyzed by Tunc et al. \cite{Tunc}. While the presented finite volume formulations are formally consistent methods, convergence analyses of these methods are lacking. 

	Alternatively, the extended finite element (XFEM) approach \cite{Scotti,Formaggia,Schwenck2015} is a method in which the surroundings are meshed independently from the fractures. The fracture meshes are then added afterwards, crossing through the domain and cutting the elements. Although this may be attractive from a meshing perspective, the cut elements may become arbitrarily small such that special constructions are needed to ensure stability. Such constructions are typically introduced whenever multiple fractures, intersections, and fracture endings are considered in the model. Our aim is to develop a method with a unified approach to such features and a different approach is therefore chosen. Admittedly, the construction of meshes will be more involved for complicated cases but we aim to relieve this by allowing for non-matching grids.

	The Mixed Finite Element (MFE) method \cite{Boffi,brezzi1991mixed} is employed in this work, since it provides two important advantages. The method defines the flux as a separate variable and mass conservation can therefore be imposed locally. Furthermore, the tools necessary to perform rigorous analysis can be adapted from those available in the literature.

	Mortar methods, as introduced in \cite{bernardi}, form an appealing framework for fracture modeling, since both non-matching grids and intersections are naturally handled. The combination with MFE has since been explored extensively (see e.g. \cite{Arbogast,pencheva2003balancing}). The idea of conductive fractures was first exploited in \cite{Roberts,Roberts2}, where Darcy flow is allowed inside the mortar space based on the pressure variable. However, in previously developed mortar MFE methods, the choice of using the pressure variable in the mortar space does not allow for strong flux continuity. 

	Herein we propose a new method, based on the structure of mortar MFE methods. Our formulation is novel in that it employs the fluxes as the mortar variable within the mixed finite element framework. Thus, the proposed method couples the flow in fractures with the surrounding domain using a stronger notion of mass conservation. For domain decomposition with matching grids, flux Lagrange multiplier for MFE methods was proposed in \cite{GW}. To the best of our knowledge, this technique has not been explored in the context of mortar MFE methods on non-matching grids. The method is designed with the four goals outlined above in mind. 

	We formulate the method hierarchically, which allows for a unified treatment of the permeable domain, the fractures, intersection lines, and intersection points in arbitrary dimensions. We show through rigorous analysis that the method is robust with respect to the aspect ratio, however we exclude the case of degenerate normal permeability from our analysis. The numerical results verify all the analytical results, and furthermore indicate stability also in the case of degenerate normal permeability. 

	The paper is organized as follows. Section~\ref{sec:model_formulation} introduces the model in a continuous setting and explains the concept of composite function spaces formed by function spaces with different dimensions. Section~\ref{sec:discretization} is devoted to the discretized problem and the analytical proofs of properties such as stability and convergence. Finally, results of numerical experiments confirming the theory in two and three dimensions are presented in Section~\ref{sec:numerical_results}. We point out that a full numerical comparison to the alternative discretization methods discussed above has been conducted separately as part of a benchmark study \cite{flemisch2017benchmarks}.

\section{Model Formulation} 
\label{sec:model_formulation}

	In this section, we first describe the notion of working with subdomains with different dimensions and introduce the notation used in this paper. Next, the governing equations for the continuous problem are derived and presented. The section is concluded with the derivation of the weak formulation of the problem.

\subsection{Geometric Representation} 
\label{sub:geometric_representation}

	Consider an $n$-dimensional domain $\Omega$, which is decomposed into subdomains with different dimensionalities.  Setting the ambient dimension of the problem $n$ equal to 2 or 3 will suffice for most practical purposes, but the theory allows for $n$ to be arbitrary. The subdomains of dimension $n-1$ then represent fractures, while the lower-dimensional domains represent intersection lines and points. 

	We start by establishing notation. Let $N^d$ denote the total number of $d$-dimensional subdomains and let each open, $d$-dimensional subdomain be denoted by $\Omega^d_i$ with $0 \leq d \leq n$ and counting index ${i \in \{ 1,2, \dots N^d\}}$. For notational simplicity, the union of all $d$-dimensional subdomains is denoted by $\Omega^d$:
	\begin{align*}
		\Omega^d &= \bigcup_{i=1}^{N^d} \Omega^d_i.
	\end{align*}
	A key concept in the decomposition is that all intersections of $d$-dimensional subdomains are considered as $(d-1)$-dimensional domains. In turn, the domain $\Omega^{d-1}$ is excluded from $\Omega^d$. For example, the point at the intersection between two lines becomes a new, lower-dimensional subdomain $\Omega^0$ which is removed from $\Omega^1$. An illustration of the decomposition in two dimensions is given in Figure~\ref{fig: Domain_decomp} (Left). The procedural decomposition by dimension applies equally well to problems in three dimensions.

	\begin{figure}[thbp]
		\centering
		\includegraphics[width = 0.9\linewidth]{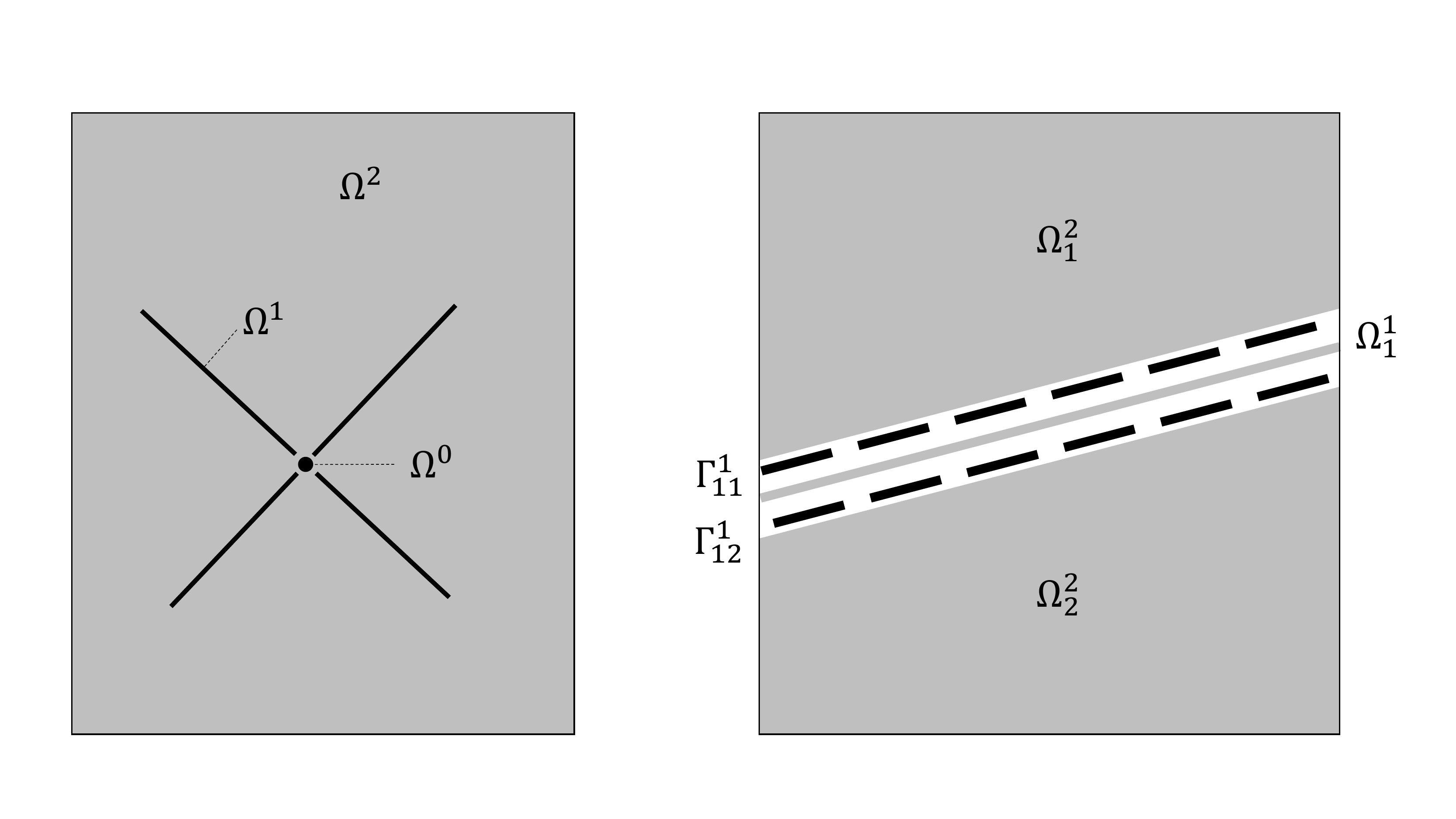}
		\caption{(Left) The domain is decomposed into subdomains where the dimensionality of each subdomain is given by the superscript. This decomposition allows us to model fractures and intersections as lower-dimensional features in the domain. In this particular illustration, we have four fracture segments, thus $N^1 = 4$. (Right) The interface $\Gamma^1$ in case of a single fracture. We define $\Gamma$ as the union of interfaces between domains of codimension one.}
		\label{fig: Domain_decomp}
	\end{figure}

	Physically, the flow between domains of different dimension (e.g. between fracture and matrix) is of particular interest. We are thus careful with the interfaces between subdomains of successive dimension. For each subdomain $\Omega^d_i$ with $d \le n - 1$, we define $\mathcal{J}_i^d$ as a set of local counting indices which enumerates its adjacent $d$-interfaces. In turn, each interface is denoted by $\Gamma_{ij}^d$ with $j \in \mathcal{J}_i^d$. Analogous to the notation as employed above, we define the following geometric entities as
	\begin{align*}
		\Gamma^d_i &= \bigcup_{j \in \mathcal{J}_i^d} \Gamma^d_{ij}, & \Gamma^d &= \bigcup_{i=1}^{N^d} \Gamma^d_i, & \Gamma &= \bigcup_{d=0}^{n-1} \Gamma^d.
	\end{align*}

	The interface $\Gamma^d_i$ coincides spatially with $\Omega^d_i$, but its importance lies in being a subset of the boundary of the adjacent $(d+1)$-dimensional domains. An illustration of $\Gamma^1$ in a two-dimensional setting is given in Figure~\ref{fig: Domain_decomp} (Right).

	At this point, we have the necessary entities to introduce the dimensional decomposition of the domain $\Omega$ and its boundary:
 	\begin{align}
 		\Omega \cup \partial \Omega &= \left(\bigcup_{d=0}^{n} \Omega^d \right) \cup \left(\bigcup_{d=1}^{n}\partial \Omega^d \backslash \Gamma^{d-1} \right).
 	\end{align}

	Let $\bm{\nu}$ denote the outward unit normal to $\Omega^d$, defined on $\partial \Omega^d$. By definition, $\bm{\nu}$ on $\Gamma^d$ is thus directed from $\Omega^{d+1}$ to $\Omega^d$, i.e. towards the lower-dimensional subdomain.

	The boundary of the model domain will enter naturally with the governing equations below. We emphasize that domains of any dimension may contact the domain boundary. Also, the case of subdomains with codimension two will not be considered in this work (e.g. line wells in 3D or two planar fractures meeting at a point). Nevertheless, it is possible to fit those cases into this framework by introducing specifically constructed subdomains of intermediate dimension.
	
	As a minor comment we note that the geometric representation, as well as much of the analysis below, can be generalized to calculus on manifolds. However, while the framework of manifolds does increase the mathematical elegance, and in some places simplifies and makes the exposition more precise, we believe that the current presentation is accessible to a wider readership. As an immediate consequence of this choice, we will from here on assume that all domains $\Omega^d_i$ are flat.

\subsection{Governing equations} 
\label{sub:governing_equations}

	The model considered in this work is governed by two physical relationships, namely mass conservation and Darcy's law. In particular, it is assumed that Darcy's law holds not just in the porous material, but also in all lower-dimensional subdomains. This corresponds to the physical situation of either thin open fractures (Poiseulle flow), or fractures filled with some material. The mathematical representations of these relationships have been well established and employed by several models \cite{Angot,Arbogast,Scotti,Roberts}. Here, we will introduce these relationships within the dimensional decomposition framework. Starting with the governing equations in the surrounding regions, we then continue with their analogues in lower-dimensional subdomains and finish with the coupling equations.

	First, let us consider the surroundings $\Omega^n$. We aim to find the flux $\bm{u}^n$ and pressure $p^n$ satisfying
	\begin{subequations}
		\begin{align}
			\bm{u}^n &= - K \nabla p^n &\text{in } \Omega^n, \label{eq: Darcy}\\
			\nabla \cdot \bm{u}^n &= f &\text{in } \Omega^n, \label{eq: massconv}\\
			p^n &= g &\text{on } \partial\Omega_D^n, \label{eq: omega_D}\\
			\bm{u}^n \cdot \bm{\nu} &= 0 &\text{on } \partial\Omega_N^n. \label{eq: omega_N}
		\end{align}
	\end{subequations}
	Here, we assume that the boundary of $\Omega$ can be parititioned as $\partial \Omega = \partial\Omega_D \cup \partial\Omega_N$, with $\partial\Omega_D \cap \partial\Omega_N = \emptyset$ and $\partial\Omega_D$ with positive measure. We assume that each subdomain $\Omega_i^n$ of $\Omega^n$ has a non-empty Dirichlet boundary, i.e. $|\partial \Omega_i^n \cap \partial \Omega_D| > 0$. The following notation is then employed within the dimensional decomposition framework:
	\begin{align*}
		\partial \Omega^d_D &= \partial \Omega^d \cap \partial \Omega_D, &
		\partial \Omega^d_N &= \partial \Omega^d \cap \partial \Omega_N, & 1 \le d \le n.
	\end{align*}

	Furthermore, $K$ is a bounded, symmetric, positive definite, $n \times n$ tensor representing the material permeability. Equation \eqref{eq: Darcy} is known as Darcy's law and equation \eqref{eq: massconv} is conservation of mass in the case of incompressible fluids.

	We continue with the governing equations defined on the lower-dimensional subdomains. In order to derive these equations with the correct scaling, two physical parameters are introduced, inherent to the geometry of the problem. First, on each $\Gamma_{ij}^d$, $0 \le d \le n-1$, let $\gamma_{ij}^d$ denote the length from $\Gamma_{ij}^d$ to the center of $\Omega^d_i$. For brevity, we will generally omit the indices on $\gamma$ and all other parameters
	
	Secondly, on each subdomain $\Omega^d_i$ with $0 \le d \le n-1$, let $\epsilon$ represent the square root of the cross-sectional length if $d = n - 1$, area if $d = n - 2$, or volume if $d = n - 3$. Ergo, $\epsilon$ scales as $\gamma^{\frac{n - d}{2}}$ by definition. We assume that both $\epsilon$ and $\gamma$ are bounded and known a priori and extended to the surroundings by setting $\epsilon = \gamma = 1$ in $\Omega^n$. 

	In general, we allow $\epsilon$ and $\gamma$ to vary spatially. As such, we are particularly interested in the case of closing fractures, i.e. where $\epsilon$ and $\gamma$ decrease to zero. Regarding the rate at which this is possible, we assume that the following holds almost everywhere in the sense of the Lebesgue measure:
	\begin{align}
		| \nabla \epsilon | \lesssim \epsilon^{\frac{1}{2}}, \label{grad eps bound}
	\end{align}
	with $|\cdot|$ denoting the Euclidean norm.

	Here, and onwards, the notation $a \lesssim b$ is used to imply that a constant $C > 0$ exists, independent of $\epsilon$, $\gamma$, and later $h$ such that $a \le Cb$. The relations $\gtrsim$ and $\eqsim$ have analogous meaning.
	
	The hat-notation $\hat{\epsilon}$ is used to denote the trace of $\epsilon$ onto $\Gamma_{ij}^d$ from one level higher, i.e. $\epsilon$ defined on $\Omega^{d+1}$:
	\begin{align*}
		\hat{\epsilon}_{ij}^d &:= \epsilon_{j}^{d+1}|_{\Gamma_{ij}^d}, & j &\in \mathcal{J}_i^d, &d &\le n-1.
	\end{align*}
	We set $\hat{\epsilon} = 1$ in $\Omega^n$. Due to the construction of the dimensional decomposition, we assume that $\Omega_i^d$ borders on a subdomain $\Omega_j^{d+1}$ with positive aperture for at least one index $j_{\max} \in \mathcal{J}_i^d$. The parameter $\hat{\epsilon}$ corresponding to this index is referred to as follows
	\begin{align}
		\hat{\epsilon}_{\max}(x) &:= \hat{\epsilon}_{i,j_{\max}}^d(x) > 0
		, & x &\in \Omega_i^d, & 0 &\le d \le n - 1. \label{eps_max bound}
	\end{align}

	The relationship between $\epsilon$ and $\hat{\epsilon}_{\max}$ is then assumed to satisfy
	\begin{align}
		\| \epsilon^{\frac{1}{2}} \|_{L^{\infty}(\Omega_i^d)} \| \hat{\epsilon}_{\max}^{-1} \|_{L^{\infty}(\Omega_i^d)} &\lesssim 1, & 1 &\le d \le n. \label{eps upperbounded by eps_max}
	\end{align}
	To justify this property, we derive for $d \ge 1$ and $n \le 3$ that
		\begin{align*}
			\hat{\epsilon}_{\max}^{-1} \epsilon^{\frac{1}{2}} 
			\eqsim \hat{\gamma}^{-\frac{n - (d+1)}{2}} \gamma^{\frac{n - d}{4}} 
			\lesssim \gamma^{- \frac{n - d}{4} + \frac{1}{2}}
			\lesssim \gamma^{0} = 1.
		\end{align*}
	Since this relationship will later be used for the fluxes in $\Omega^d$, \eqref{eps upperbounded by eps_max} is not necessarily imposed for $d = 0$.
	
	With the defined $\epsilon$, the scaled flux $\bm{u}^d$, $1 \le d \le n$, is introduced such that
	\begin{align}
		\bm{u}^d := \epsilon \tilde{\bm{u}}^d, \label{scaled fluxes}
	\end{align} 
	with $\tilde{\bm{u}}^d$ as the average, tangential flux in $\Omega^d$. In other words, $\bm{u}^d$ can be described as an intermediate definition between the average flux $\tilde{\bm{u}}^d$ and the integrated flux (given by $\epsilon \bm{u}^d$). It is reminiscent of the scaled flux presented in \cite{arbogast2017mixed}.

	In order to derive the conservation equation on a lower-dimensional surface, the fluxes entering through the boundary $\Gamma^d$ must be accounted for \cite{nordbottenbook}. Let $\lambda^d$, $0 \le d \le n-1$, denote $\bm{u}^{d + 1} \cdot \bm{\nu}$ on $\Gamma^d$. Here $\bm{\nu}$ is the normal vector associated with $\Gamma^d$ as defined in Subsection~\ref{sub:geometric_representation}. 

	Mass may enter the fracture from one side and continue tangentially through the fracture creating a (pointwise) difference in normal fluxes. To capture this jump, $\lambda^d$ will consist of multiple components $\lambda_{ij}^d$, each representing a scaled flux across $\Gamma_{ij}^d$. 

	Recall the set $\mathcal{J}_i^d$ of local indices at $\Omega^d_i$ as defined in Subsection~\ref{sub:geometric_representation}. The jump operator is then given by
	\begin{align}
		\llbracket \cdot \rrbracket &: L^2(\Gamma^d) \to L^2(\Omega^d), & 
		\llbracket \lambda \rrbracket|_{\Omega^d_i} &= -\sum_{j \in \mathcal{J}_i^d} \lambda_{ij}^d, & 
		0 \le d \le n-1. \label{jump}
	\end{align}

	The definitions introduced in this section allow us to deduce the mass conservation equation for the lower-dimensional domains. Let us consider $\Omega^1$ with $n = 2$ and integrate the mass conservation equation \eqref{eq: massconv} over a quadrilateral region $\omega$ illustrated in Figure~\ref{fig: Reduction}. 
	\begin{figure}[thbp]
		\centering
		\includegraphics[width = \linewidth]{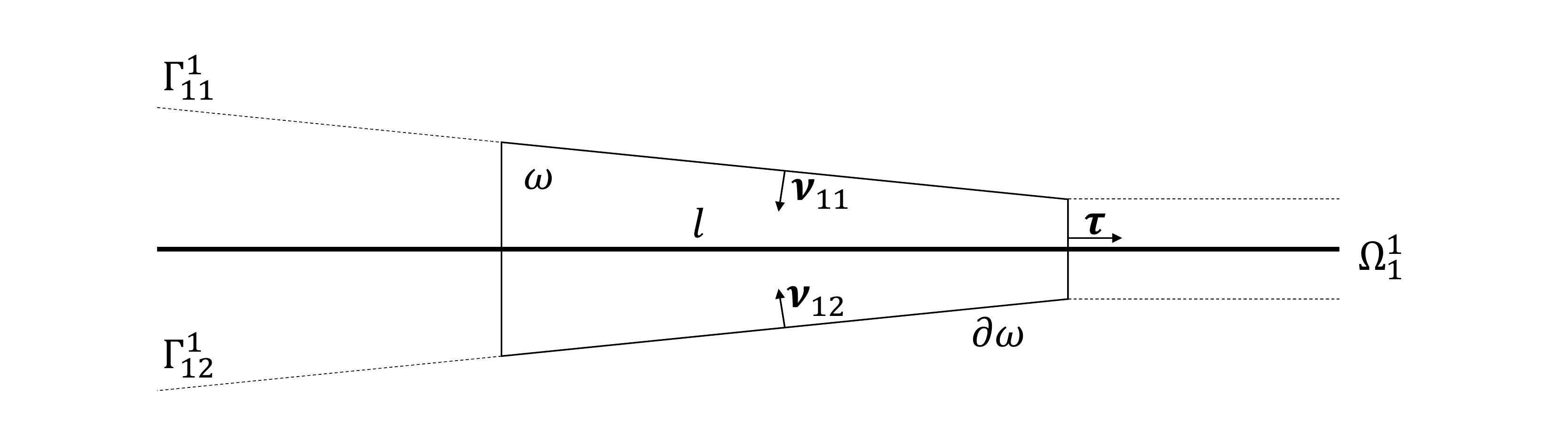}
		\caption{Local geometry for derivation of the conservation law. $\Omega^1$ represents the reduced, lower-dimensional manifold whereas the boundary between the fracture and matrix is given by $\Gamma$.}
		\label{fig: Reduction}
	\end{figure}

	We apply the divergence theorem on $\omega$ to derive
	\begin{align}
		\int_{\omega} \nabla \cdot \bm{u} = 
		\int_{\partial \omega_r} \bm{u} \cdot \bm{\tau}
		- \int_{\partial \omega_l} \bm{u} \cdot \bm{\tau}
		- \int_{\partial \omega \cap \Gamma} \bm{u} \cdot \bm{\nu}.
	\end{align}

	Next, we let the width of $\omega$, given by $l$, decrease to zero. The definition of the scaled fluxes from \eqref{scaled fluxes} and the factor $\hat{\epsilon} = 1$ gives us
	\begin{align}
		\lim_{l \to 0} l^{-1} \int_{\omega} \nabla \cdot \bm{u} &= 
		\nabla_{\tau} \cdot \epsilon \bm{u}^1 
		+ \big\llbracket \big(1 + \big|\nabla_{\tau} \frac{\gamma}{2}\big|^2 \big)^{\frac{1}{2}} \hat{\epsilon} \bm{u}^2 \cdot \bm{\nu} \big\rrbracket \nonumber\\
		&= \nabla_{\tau} \cdot \epsilon \bm{u}^1 
		+ \big\llbracket \big(1 + \big|\nabla_{\tau} \frac{\gamma}{2}\big|^2 \big)^{\frac{1}{2}} \hat{\epsilon} \lambda^1 \big\rrbracket,
	\end{align}
	with $\nabla_{\tau}$ the nabla operator tangential to $\Omega^1$.
	Note that the term $\big(1 + \big|\nabla_{\tau} \frac{\gamma}{2}\big|^2 \big)^{\frac{1}{2}}$ is close to unity since the changes in aperture are small by \eqref{grad eps bound}. We will therefore omit this factor for simplicity of exposition, while understanding that it can be subsumed into the definition of $\hat{\epsilon}$ at no additional theoretical complexity, and thus state the resulting conservation law as 
	\begin{align}
		\nabla \cdot \epsilon \bm{u}^d + \llbracket \hat{\epsilon} \lambda^d \rrbracket &=  \epsilon^2 f^d &\text{in } \Omega^d, \ 1 \le d \le n-1. \label{massconvlower}
	\end{align}	
	Here, $f^d$ represents the averaged source terms within $\Omega^d$. From here on, we denote $\nabla$ as the $d$-dimensional vector differential operator in $\Omega^d$. The case $d=0$ deserves additional attention since there is no tangential direction in which flow is possible. In turn, the mass conservation equation is reduced to 
	\begin{align}
		\llbracket \hat{\epsilon} \lambda^0 \rrbracket  &= \epsilon^2 f^0 &\text{in } \Omega^0. \label{Massconvlower 0}
	\end{align}	

	Equation \eqref{massconvlower} is simplified by introducing the semi-discrete differential operator $D$:
	\begin{align}
		D \cdot [\bm{u}^d, \lambda^d] &:= \nabla \cdot \bm{u}^d + \llbracket \lambda^d \rrbracket. \label{div D}
	\end{align}

	Continuing with the constitutive relationships, we consider Darcy's law in lower dimensions as described by the following linear expression:
	\begin{align}
		\epsilon^{-1} \bm{u}^d &= - K\nabla p^d &\text{in } \Omega^d, \ 1 \le d \le n-1. \label{Darcylower}
	\end{align}
	Note that we abuse notation once more by defining the permeability $K$ as a $d \times d$ tensor when used in $\Omega^d$. 

	The required boundary conditions for the lower-dimensional problems are chosen in the following way. First, the fracture may cross the domain and end on the boundary $\partial \Omega$. In that case, the imposed boundary condition in $\Omega^d$ is chosen to coincide with the boundary condition defined for the corresponding portion of $\partial \Omega$. In other words, if the fracture ends on $\partial \Omega_N$, a no-flux condition is imposed. On the other hand, if it ends on $\partial \Omega_D$, the pressure value is set to the average of $g$ across the cross section of $\Omega^d$, which we denote by $g^d$.
		\begin{align*}
			p^d &= g^d &\text{on } &\partial\Omega_D^d, \\
			\bm{u}^d \cdot \bm{\nu} &= 0 &\text{on } &\partial\Omega_N^d, & 1 &\le d \le n-1.
		\end{align*}
	The remainder of $\partial \Omega^d$ either borders on a lower-dimensional domain or represents an immersed tip. In the former case, a flux boundary condition is imposed on $\Gamma^{d-1}$ using the previously defined variable $\lambda^{d-1}$. In case of immersed tips, we assume that the mass transfer through the tip is negligible due to the large ratio between the fracture aperture and length. Therefore, in accordance with \cite{Angot}, a no-flux boundary condition is imposed. The boundary conditions are summarized as
	\begin{align*}
		\bm{u}^d \cdot \bm{\nu} &= \lambda^{d - 1} & \text{ on } \Gamma^{d-1 },&  \ 1 \le d \le n,\\
		\bm{u}^d \cdot \bm{\nu} &= 0 & \text{ on } \partial \Omega^d \backslash (\Gamma^{d-1 } \cup \partial \Omega),& \ 1 \le d \le n-1.
	\end{align*} 
	We will also allow for $\epsilon \downarrow 0$ at fracture tips, leading to a degenerate equation wherein the boundary condition is mute.

	Analogous to \cite{Angot,Roberts2}, Darcy's law is assumed to describe the flow normal to the fracture. For this, we introduce the normal permeability $K_{\nu}^d$ in $\Omega^d$ and impose the following relationship between the scaled, normal flux $\lambda^d$ and the pressure difference on $\Gamma_{ij}^d$:
	\begin{align}
		\hat{\epsilon}^{-1} \lambda_{ij}^d &= - K_{\nu} \frac{p_i^d - p^{d+1}|_{\Gamma_{ij}^d}}{\gamma}, & \ 0 \le d \le n-1, \label{pressure jump}
	\end{align}
	where we use the notation $p_i^d = p^d|_{\Omega_i^d}$. Moreover, sufficient regularity of $p$ is assumed in order to take such traces.

	The above represents the full description of the model equations considered herein, and is the setting in which the numerical method is constructed and validated. However, the analysis of both the continuous and discrete settings is restricted to the case where we have two further constants $c_0$ and $c_1$ such that the normal permeability is not degenerate in the following sense:
	\begin{align}
		0 < c_0 \le \gamma K_{\nu}^{-1} \le c_1 < \infty, \label{gamma_0}
	\end{align}
	similar to \cite{Roberts2}. We note in particular that the lower bound is needed for the completeness of the solution space under the chosen norms, see Lemma~\ref{Lem: Completeness}.

	The above equations comprise our model problem for flow in fractured porous media. 
	
\subsection{Weak Formulation} 
\label{sub:weak_formulation}

	Let us continue by deriving the weak formulation of the problem. For this, we introduce the function spaces associated with the dimensional decomposition introduced in Subsection~\ref{sub:geometric_representation}. For each value of $d$ denoting the dimensionality, let the function space $\bm{V}^d$ contain the (tangential) flux, let $\Lambda^d$ contain the flux across subdomain interfaces, and let $Q^d$ contain the pressure. For the continuous weak formulation, we define these function spaces as
	\begin{align*}
			\bm{V}^d &= \big\{\bm{v} \in (L^2(\Omega^d))^d: \nabla \cdot \epsilon \bm{v} \in L^2(\Omega^d)
			,\ (\epsilon \bm{v} \cdot \bm{\nu})|_{\partial \Omega^d \backslash (\Gamma^{d-1} \cup \partial \Omega_D)} = 0 \big\}, & 1 &\leq d \leq n,\\
			\Lambda^d &= L^2(\Gamma^d),  & 0 &\leq d \leq n-1,\\
			Q^d &= L^2(\Omega^d), & 0 &\leq d \leq n.
	\end{align*}
	The key tool used to create a succinct method, is to create dimensionally structured function spaces by applying the direct sum over all different dimensionalities. Particularly, we define the composite function spaces
	\begin{align}
		\mathscr{V} &= \bigoplus_{d=1}^n \bm{V}^d, & 
		\mathit{\Lambda} &= \bigoplus_{d=0}^{n-1} \Lambda^d, & 
		\mathscr{Q} &= \bigoplus_{d=0}^n Q^d. \label{eq: direct_sum}
	\end{align}

	The dimensionally structured space $\mathit{\Lambda}$ will contain the normal flux across $\Gamma$ and act as a mortar space. To avoid doubly defining the normal fluxes across $\Gamma$ with functions from $\mathscr{V}$ and $\mathit{\Lambda}$, a final function space is defined containing functions with zero normal flux across $\Gamma$:
	\begin{align}
		\bm{V}^d_0 &= \left\{\bm{v} \in \bm{V}^d: \epsilon \bm{v} \cdot \bm{\nu} = 0 \text{ on } \Gamma^{d - 1} \right\}, & 1 &\leq d \leq n, \nonumber\\
		\mathscr{V}_0 &= \bigoplus_{d=1}^n \bm{V}^d_0.
	\end{align}

	To rigorously impose the essential boundary condition on $\Gamma$, a linear extension operator $\mathcal{R}$ is introduced for functions belonging to $\mathit{\Lambda}$. The construction of this operator is done using the dimensional decomposition. For $0 \le d \le n - 1$, let the operator $\mathcal{R}^d: \Lambda^d \to \bm{V}^{d + 1}$ be defined such that
	\begin{align}
		\mathcal{R}^d \lambda^d \cdot \bm{\nu} &= \begin{cases}
		\lambda^d \ \text{on} \ \Gamma^d \\ 0 \ \text{on} \ \partial \Omega^{d+1} \backslash \Gamma^d,
		\end{cases} \label{eq: ExtensionR}
	\end{align}
	in which $\bm{\nu}$ represents the unit normal vector associated with $\Gamma^d$. The image of $\mathcal{R}^d$ has slightly higher regularity than $H(\operatorname{div}; \Omega^{d+1})$ with normal trace in $L^2(\partial \Omega^{d+1})$. Now, let us define the operator $\mathcal{R}: \mathit{\Lambda} \to \mathscr{V}$ as
	\begin{align*}
		\mathcal{R} \lambda = \bigoplus_{d = 0}^{n-1} \mathcal{R}^d \lambda^d.
	\end{align*} 
	At this point, some freedom remains in the choice of $\mathcal{R}$. Even though the resulting method is not affected by the eventual choice, a specific extension operator is constructed later in \eqref{Rtilde} which has favorable properties for the sake of the analysis.

	Due to this construction, the flux will always be composed of a pair $(\bm{u}_0, \lambda)$ which gives rise to the space $\mathscr{X}$ given by
	\begin{align}
		\mathscr{X} = \mathscr{V}_0 \times \mathit{\Lambda}. \label{space X}
	\end{align}

	With the appropriate function spaces and operators defined, we continue with the derivation of the weak form of the problem. The derivation is standard for all equations except for \eqref{pressure jump}, which requires some additional attention. For a given $\Omega_i^d$, $0 \le d \le n-1$, let us test \eqref{pressure jump} with a function $\mu^d \in \Lambda^d$.	After summation over $j \in \mathcal{J}_i^d$, we obtain
	\begin{align}
		\sum_{j \in \mathcal{J}_i^d} \langle \frac{\gamma}{K_{\nu}} \lambda_{ij}^d, \mu_{ij}^d \rangle_{\Gamma_{ij}^d} &= \sum_{j \in \mathcal{J}_i^d} \langle p^{d + 1}, \hat{\epsilon} \mu^d_{ij} \rangle_{\Gamma_{ij}^d} + ( p_i^d, \llbracket \hat{\epsilon} \mu^d \rrbracket )_{\Omega_i^d}, \label{normalDarcy}
	\end{align}
	where $\langle \cdot, \cdot \rangle_{\Gamma_{ij}^d}$ and $(\cdot, \cdot)_{\Omega_i^d}$ denote the $L^2$-inner products on $\Gamma_{ij}^d$ and $\Omega_i^d$, respectively.
	A useful aspect of this relationship is that the first term on the right-hand side is exactly the boundary term which appears in the weak form of Darcy's law \eqref{Darcylower} after partial integration. The notation is simplified by introducing the inner products and the associated norms in the dimensional decomposition framework:
	\begin{align*}
		(\cdot,\cdot)_{\Omega} &= \sum_{d = 0}^{n} (\cdot,\cdot)_{\Omega^d} = \sum_{d = 0}^{n} \sum_{i = 1}^{N^d} (\cdot,\cdot)_{\Omega_i^d}, &
		\| \cdot \|_{L^2(\Omega)}^2 &= \sum_{d = 0}^{n} \| \cdot \|_{L^2(\Omega^d)}^2, \\
		\langle \cdot,\cdot \rangle_{\Gamma} &= \sum_{d = 0}^{n-1} \langle\cdot,\cdot\rangle_{\Gamma^d} = \sum_{d = 0}^{n-1} \sum_{i = 1}^{N^d} \sum_{j \in \mathcal{J}_i^d} \langle\cdot,\cdot\rangle_{\Gamma_{ij}^d} , &
		\| \cdot \|_{L^2(\Gamma)}^2 &= \sum_{d = 0}^{n-1} \| \cdot \|_{L^2(\Gamma^d)}^2.
	\end{align*}
	We are now ready to state the variational form of the problem:

	The weak solution $(\bm{u}_0, \lambda, p) \in \mathscr{V}_{0} \times \mathit{\Lambda} \times \mathscr{Q}$ satisfies
	\begin{subequations} \label{full system}
		\begin{align}
			(K^{-1} (\bm{u}_0 + \mathcal{R} \lambda), \bm{v}_0)_{\Omega} - (p, \nabla \cdot \epsilon \bm{v}_0)_{\Omega} 
			&= - \langle g, \epsilon \bm{v}_0 \cdot \bm{\nu} \rangle_{\partial \Omega_D}
			& \forall \bm{v}_0 &\in \mathscr{V}_0, \label{full u}\\
			(K^{-1} (\bm{u}_0 + \mathcal{R} \lambda), \mathcal{R} \mu)_{\Omega} 
			- ( p, \nabla \cdot \epsilon \mathcal{R} \mu)_{\Omega} \nonumber \\
			+ \langle \frac{\gamma}{K_{\nu}} \lambda, \mu \rangle_{\Gamma}
			- ( p, \llbracket \hat{\epsilon} \mu \rrbracket )_{\Omega} &= 0 & \forall \mu &\in \mathit{\Lambda}, \label{full lambda} \\
			-(\nabla \cdot \epsilon (\bm{u}_0 + \mathcal{R} \lambda), q)_{\Omega} 
			- ( \llbracket \hat{\epsilon} \lambda \rrbracket, q )_{\Omega}
			&= -(\epsilon^2 f,q)_{\Omega} 
			& \forall q &\in \mathscr{Q}. \label{full p}
		\end{align}
	\end{subequations}
	We set all functions not defined for certain indexes (such as $\bm{u}_0^0$ and $\lambda^n$) to zero such that the unified presentation is well-defined. Equation \eqref{full u} follows from \eqref{eq: Darcy} and \eqref{Darcylower}, whereas equation \eqref{full lambda} follows additionally from \eqref{normalDarcy}. Finally, equation \eqref{full p} follows from equations \eqref{eq: massconv}, \eqref{massconvlower} and \eqref{Massconvlower 0}. In the above, we assume that $g \in H^{\frac{1}{2}}(\partial \Omega_D)$ and $f \in L^2(\Omega)$ which guarantees that the right-hand side terms in \eqref{full u} and \eqref{full lambda} are well-posed. In particular, since $\epsilon \bm{v_0}^d \in H(\operatorname{div}; \Omega^d)$ and $\epsilon \bm{v_0}^d \cdot \bm{\nu} = 0$ on $\partial \Omega^d \backslash \partial \Omega_D^d$, then $\epsilon \bm{v_0}^d \cdot \bm{\nu} \in H^{-\frac{1}{2}}(\partial \Omega_D^d)$, see e.g. \cite{galvis2007non}.

	We note that for fractures which have $\epsilon = \gamma = 0$ uniformly, this model reduces to a domain decomposition method which uses $\lambda$ as a flux mortar to impose continuity of pressure in a weak sense. 
	
	The next step is to observe that the system \eqref{full system} can be classified as a saddle point problem. For this purpose, we rewrite the problem into a different format by using the divergence operator $D$ from \eqref{div D} and the bilinear forms $a$ and $b$ given by
	\begin{subequations} \label{operators}
		\begin{align}
			a(\bm{u}_0, \lambda;\ \bm{v}_0, \mu) &= 
			(K^{-1} (\bm{u}_0 + \mathcal{R} \lambda), \bm{v}_0 + \mathcal{R} \mu)_{\Omega}
			+ \langle \frac{\gamma}{K_{\nu}} \lambda, \mu \rangle_{\Gamma}, \label{operatora}\\
			b(\bm{v}_0, \mu;\ p) &= 
			- ( p, D \cdot [\epsilon (\bm{v}_0 + \mathcal{R} \mu), \hat{\epsilon} \mu])_{\Omega}. \label{operatorb}
		\end{align}
	\end{subequations}
		
	These definitions allows us to rewrite system \eqref{full system} to the following, equivalent problem: \\
	Find the functions ${(\bm{u}_0, \lambda, p) \in \mathscr{V}_{0} \times \mathit{\Lambda} \times \mathscr{Q}}$ such that
	\begin{align}
		a(\bm{u}_0, \lambda;\ \bm{v}_0, \mu) + 
				b(\bm{v}_0, \mu;\ p) 
				- b(\bm{u}_0, \lambda;\ q) &= -\langle g, \epsilon \bm{v}_0 \cdot \bm{\nu} \rangle_{\partial \Omega_D} + (\epsilon^2 f,q)_{\Omega}, \label{eq: weakform}
	\end{align}
	for all $(\bm{v}_0, \mu, q) \in \mathscr{V}_{0} \times \mathit{\Lambda} \times \mathscr{Q}$.

\subsection{Well-posedness}
\label{sec:well_posedness}

	Before proceeding to the discretization, it is important to analyze the variational problem \eqref{eq: weakform} in the continuous sense. To that end, we present a proof of the well-posedness of this problem within the dimensional hierarchy setting.
	
	For the purpose of the analysis, let us introduce a specific extension operator $\mathcal{R}: \mathit{\Lambda} \to \mathscr{V}$. For $1 \le d \le n$, let $\mathcal{R}^{d-1} \lambda^{d-1} \in \bm{V}^d$ and an auxiliary variable $p_{\lambda}^d \in Q^d$ be defined as the solution to the following problem:
		\begin{subequations} \label{Rtilde}
			\begin{align}
				(K^{-1} \mathcal{R}^{d-1} \lambda^{d-1}, \bm{v}_0^d)_{\Omega^d} - 
				(p_\lambda^d, 
				\nabla \cdot \epsilon\bm{v}_0^d)_{\Omega^d} &= 0 & \forall \bm{v}_0^d &\in \bm{V}_0^d, \\
				(\nabla \cdot \epsilon\mathcal{R}^{d-1} \lambda^{d-1}, q^d)_{\Omega^d} + (\epsilon p_\lambda^d, q^d)_{\Omega^d} &= 0 & \forall q^d &\in Q^d, \label{c} \\
				\mathcal{R}^{d-1} \lambda^{d-1} \cdot \bm{\nu} &= \lambda^{d-1} &&\text{on } \Gamma^{d-1}, \\
				\mathcal{R}^{d-1} \lambda^{d-1} \cdot \bm{\nu} &= 0 &&\text{on } \partial \Omega^d \backslash \Gamma^{d-1}.
			\end{align}
		\end{subequations}
	Note that the boundary conditions are chosen such that $\mathcal{R}^{d-1} \lambda^{d-1}$ is a suitable extension compliant with equation \eqref{eq: ExtensionR}. 

	\begin{lemma} \label{EllipticR}
		The solution $(\mathcal{R}^{d-1} \lambda^{d-1}, p_{\lambda}^d) \in \bm{V}^d \times Q^d$ to problem \eqref{Rtilde} satisfies the following bounds:
		\begin{subequations}
			\begin{align}
				\| K^{-\frac{1}{2}} \mathcal{R}^{d-1} \lambda^{d-1} \|_{L^2(\Omega^d)} + \| \epsilon^{\frac{1}{2}} p_{\lambda}^d \|_{L^2(\Omega^d)} &\lesssim \| \lambda^{d-1} \|_{L^2(\Gamma^{d-1})}, \label{bound Rlambda} \\
				\| \nabla \cdot \epsilon \mathcal{R}^{d-1} \lambda^{d-1} \|_{L^2(\Omega^d)} &\lesssim \| \epsilon_{\max}^{\frac{1}{2}} \lambda^{d-1} \|_{L^2(\Gamma^{d-1})}, \label{bound div Rlambda}
			\end{align}	
		\end{subequations}
		where $\epsilon_{\max}|_{\Omega_i^d} = \| \epsilon \|_{L^{\infty}(\Omega_i^d)}$.
	\end{lemma}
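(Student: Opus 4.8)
The plan is to reduce the lemma to an energy estimate for the mixed problem \eqref{Rtilde}, whose only nontrivial ingredient is the construction of a well-behaved extension of the boundary datum $\lambda^{d-1}$. Since \eqref{Rtilde} decouples across the subdomains $\Omega_i^d$ (and across $d$), I fix one pair $(\Omega_i^d,\Gamma_i^{d-1})$ and abbreviate $\bm{w} := \mathcal{R}^{d-1}\lambda^{d-1}$ and $p := p_\lambda^d$. The first task is to build an explicit $\bm{w}^* \in \bm{V}^d$ with $\bm{w}^* \cdot \bm{\nu} = \lambda^{d-1}$ on $\Gamma^{d-1}$ and $\bm{w}^* \cdot \bm{\nu} = 0$ on $\partial\Omega^d \backslash \Gamma^{d-1}$, so that $\bm{w} - \bm{w}^* \in \bm{V}_0^d$ is an admissible test function (note $(\bm{w}-\bm{w}^*)\cdot\bm{\nu}=0$ forces $\epsilon(\bm{w}-\bm{w}^*)\cdot\bm{\nu}=0$ on $\Gamma^{d-1}$, as required by the definition of $\bm{V}_0^d$).

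The energy identity comes next. I would test the first equation of \eqref{Rtilde} with $\bm{v}_0 = \bm{w} - \bm{w}^*$ and test \eqref{c} with $q = p$, the latter giving $(\nabla \cdot \epsilon \bm{w}, p)_{\Omega^d} = -\|\epsilon^{\frac{1}{2}} p\|_{L^2(\Omega^d)}^2$. Expanding the first relation and replacing the term $-(p, \nabla \cdot \epsilon \bm{w})_{\Omega^d}$ by this expression, I expect to arrive at
\begin{align*}
	\|K^{-\frac{1}{2}} \bm{w}\|_{L^2(\Omega^d)}^2 + \|\epsilon^{\frac{1}{2}} p\|_{L^2(\Omega^d)}^2 = (K^{-1} \bm{w}, \bm{w}^*)_{\Omega^d} - (p, \nabla \cdot \epsilon \bm{w}^*)_{\Omega^d}.
\end{align*}
Applying Cauchy--Schwarz on the right (pairing $p$ with $\nabla\cdot\epsilon\bm{w}^*$ through the weights $\epsilon^{\pm\frac12}$) followed by Young's inequality absorbs the $\bm{w}$- and $p$-contributions into the left-hand side, leaving
\begin{align*}
	\|K^{-\frac{1}{2}} \bm{w}\|_{L^2(\Omega^d)}^2 + \|\epsilon^{\frac{1}{2}} p\|_{L^2(\Omega^d)}^2 \lesssim \|K^{-\frac{1}{2}} \bm{w}^*\|_{L^2(\Omega^d)}^2 + \|\epsilon^{-\frac{1}{2}} \nabla \cdot \epsilon \bm{w}^*\|_{L^2(\Omega^d)}^2.
\end{align*}

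The main obstacle is then the extension bound $\|K^{-\frac{1}{2}}\bm{w}^*\|_{L^2(\Omega^d)}^2 + \|\epsilon^{-\frac{1}{2}}\nabla\cdot\epsilon\bm{w}^*\|_{L^2(\Omega^d)}^2 \lesssim \|\lambda^{d-1}\|_{L^2(\Gamma^{d-1})}^2$, which yields \eqref{bound Rlambda}. Here I would exploit the flatness of the subdomains: near each interface piece $\Gamma_{ij}^{d-1}$ I use boundary-fitted coordinates $(y,s)$, with $s$ the inward normal distance, and set $\bm{w}^* = \lambda^{d-1}(y)\chi(s)\bm{e}_s$ for a fixed cutoff $\chi$ with $\chi(0)=1$ supported in an $O(1)$ collar fitting inside $\Omega_i^d$ (whose geometric extent is $O(1)$, its thinness being encoded in $\epsilon$ rather than in the geometry). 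Flatness makes the tangential integration exact, so that $\|K^{-\frac12}\bm{w}^*\|^2 \lesssim \|\lambda^{d-1}\|_{L^2(\Gamma^{d-1})}^2$. For the weighted divergence one computes $\epsilon^{-\frac12}\nabla\cdot\epsilon\bm{w}^* = \lambda^{d-1}\big(\epsilon^{-\frac12}\partial_s\epsilon\,\chi + \epsilon^{\frac12}\chi'\big)$, and the decisive point is that \eqref{grad eps bound} gives $\epsilon^{-\frac12}|\partial_s\epsilon| \le \epsilon^{-\frac12}|\nabla\epsilon| \lesssim 1$, while $\epsilon^{\frac12}\chi' \lesssim 1$ since $\epsilon\lesssim 1$; hence the weighted divergence is controlled with a constant independent of $\epsilon$ and $\gamma$. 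This is precisely the step where the regularity assumption \eqref{grad eps bound} and the flatness hypothesis are indispensable for robustness.

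Finally, \eqref{bound div Rlambda} requires no new construction: equation \eqref{c} states $\nabla\cdot\epsilon\bm{w} = -\epsilon p$ in $L^2(\Omega_i^d)$, whence $\|\nabla\cdot\epsilon\bm{w}\|_{L^2(\Omega_i^d)} = \|\epsilon p\|_{L^2(\Omega_i^d)} \le \|\epsilon^{\frac12}\|_{L^\infty(\Omega_i^d)}\|\epsilon^{\frac12}p\|_{L^2(\Omega_i^d)} = \epsilon_{\max}^{\frac12}\|\epsilon^{\frac12}p\|_{L^2(\Omega_i^d)}$. Inserting the subdomain version of \eqref{bound Rlambda} and summing over $i$ then produces $\|\nabla\cdot\epsilon\bm{w}\|_{L^2(\Omega^d)} \lesssim \|\epsilon_{\max}^{\frac12}\lambda^{d-1}\|_{L^2(\Gamma^{d-1})}$, as claimed. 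The estimates are linear and, taking $\lambda^{d-1}=0$, force $\bm{w}=0$ and $p=0$, so they also yield uniqueness; existence follows from standard theory for this mixed Darcy-type problem.
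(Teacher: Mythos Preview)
Your argument is correct and follows essentially the same route as the paper: construct an extension of the boundary datum, test \eqref{Rtilde} with the difference and with $q=p_\lambda^d$ to obtain the energy identity, then control the right-hand side using \eqref{grad eps bound}; the divergence bound \eqref{bound div Rlambda} is obtained in both cases directly from \eqref{c}. The only notable differences are cosmetic: the paper invokes a standard $H(\operatorname{div})$ extension (citing Quarteroni--Valli) rather than your explicit cutoff construction, and it handles the term $(p_\lambda^d,\nabla\cdot\epsilon\bm{w}^*)$ by the product rule $(p_\lambda^d\nabla\epsilon,\bm{w}^*)+(\epsilon p_\lambda^d,\nabla\cdot\bm{w}^*)$ and then applies \eqref{grad eps bound}, instead of your weighted Cauchy--Schwarz with $\epsilon^{\pm\frac12}$---the product-rule version is marginally cleaner in that it never forms $\epsilon^{-\frac12}$ and so sidesteps any interpretation issues where $\epsilon$ vanishes.
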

	\begin{proof}
		Let us introduce the function $\bm{v}_\lambda^d$ as the $H(\operatorname{div})$-extension of $\lambda^{d-1}$ described in \cite{quarteroni1999domain} (Section 4.1.2). In particular, $\bm{v}_\lambda^d \cdot {\bf{\nu}} = \lambda^{d-1}$ and it satisfies the following bound:
		\begin{align}
			\| \bm{v}_\lambda^d \|_{L^2(\Omega^d)}^2 + \| \nabla \cdot \bm{v}_\lambda^d \|_{L^2(\Omega^d)}^2
			&\lesssim \| \lambda^{d-1} \|_{L^2(\Gamma^{d-1})}^2. \label{hdiv extension}
		\end{align}

		Inequality \eqref{bound Rlambda} is formed by setting the test functions in \eqref{Rtilde} as $\bm{v}_0^d = \mathcal{R}^{d-1} \lambda^{d-1} - \bm{v}_\lambda^d$ and $q^d = p_{\lambda}^d$. After summation of the equations, we obtain
		\begin{align*}			
			\| K^{-\frac{1}{2}} \mathcal{R}^{d-1} \lambda^{d-1} \|_{L^2(\Omega^d)}^2 &+ \| \epsilon^{\frac{1}{2}} p_{\lambda}^d \|_{L^2(\Omega^d)}^2 =
			(K^{-1} \mathcal{R}^{d-1} \lambda^{d-1}, \bm{v}_\lambda^d)_{\Omega^d} + 
				(p_{\lambda}^d, 
				\nabla \cdot \epsilon\bm{v}_\lambda^d)_{\Omega^d} \nonumber \\
			&=
			(K^{-1} \mathcal{R}^{d-1} \lambda^{d-1}, \bm{v}_\lambda^d)_{\Omega^d} + (p \nabla \epsilon, \bm{v}_\lambda^d)_{\Omega^d} + 
				(\epsilon p_\lambda^d, 
				\nabla \cdot \bm{v}_\lambda^d)_{\Omega^d}.
		\end{align*}
		The Cauchy-Schwarz inequality is then used followed by the positive-definiteness of $K$, the bound on $\nabla \epsilon$ from \eqref{grad eps bound}, and \eqref{hdiv extension} to give:
		\begin{align}			
			\| K^{-\frac{1}{2}} \mathcal{R}^{d-1} \lambda^{d-1} \|_{L^2(\Omega^d)}^2 
			&+ \| \epsilon^{\frac{1}{2}} p_{\lambda}^d \|_{L^2(\Omega^d)}^2 \nonumber \\
			&\lesssim
			\left(\| K^{-\frac{1}{2}} \mathcal{R}^{d-1} \lambda^{d-1} \|_{L^2(\Omega^d)} 
			+ \| \epsilon^{\frac{1}{2}} p_{\lambda}^d \|_{L^2(\Omega^d)} \right)
			\| \lambda^{d-1} \|_{L^2(\Gamma^{d-1})}. \label{first bound cont}
		\end{align}

		Secondly, we obtain \eqref{bound div Rlambda} by setting $q^d = \nabla \cdot \epsilon \mathcal{R}^{d-1} \lambda^{d-1}$ in equation \eqref{c}:
		\begin{align*}
			\| \nabla \cdot \epsilon \mathcal{R}^{d-1} \lambda^{d-1} \|_{L^2(\Omega^d)} &\le
			\| \epsilon p_{\lambda}^d \|_{L^2(\Omega^d)} 
			\le \| \epsilon_{\max}^{\frac{1}{2}} \epsilon^{\frac{1}{2}} p_{\lambda}^d \|_{L^2(\Omega^d)}
			\lesssim \| \epsilon_{\max}^{\frac{1}{2}} \lambda^{d-1} \|_{L^2(\Gamma^{d-1})}.
		\end{align*}
	\end{proof}

	The constructed extension operator $\mathcal{R}$ allows us to form the norms as used in the subsequent analysis:
	\begin{subequations} \label{norms cont}
		\begin{align}
			\| [\bm{v}_0, \mu ] \|_{\mathscr{X}_{\mathcal{R}}}^2 =&\  
			\| K^{-\frac{1}{2}} (\bm{v}_0 + \mathcal{R} \mu)\|_{L^2(\Omega)}^2 
			+ \| \gamma^{\frac{1}{2}} K_{\nu}^{-\frac{1}{2}} \mu \|_{L^2(\Gamma)}^2 \nonumber \\
			&+ \| \hat{\epsilon}_{\max}^{-1} D \cdot [\epsilon (\bm{v}_0 + \mathcal{R} \mu), \hat{\epsilon} \mu ] \|_{L^2(\Omega)}^2, \label{X-norm cont}\\
			\| q \|_{\mathscr{Q}} =& \ \| \hat{\epsilon}_{\max} q \|_{L^2(\Omega)}. \label{p-norm cont}
		\end{align}
	\end{subequations}
	Here, $\hat{\epsilon}_{\max}$ is used as defined in \eqref{eps_max bound}. The energy norm is created as the combination of these norms:
	\begin{align}
		\enorm{(\bm{u}_0, \lambda, p)}^2 &= \| [\bm{u}_0, \lambda ] \|_{\mathscr{X}_{\mathcal{R}}}^2 + \| p \|^2_{\mathscr{Q}}. \label{eq: enorm}
	\end{align}

	In order to show well-posedness of the problem in this energy norm, we present three lemmas, which provide the necessary tools to invoke standard saddle-point theory.

	\begin{lemma}[Completeness] \label{Lem: Completeness}
		With the extension operator $\mathcal{R}$ from \eqref{Rtilde}, the space $\mathscr{X}$ from \eqref{space X} is a Hilbert space with inner product
		\begin{align}
			( [\bm{u}_0, \lambda ], [\bm{v}_0, \mu ] )_{\mathscr{X}_{\mathcal{R}}} =&\  
			( K^{-1} (\bm{u}_0 + \mathcal{R} \lambda), \bm{v}_0 + \mathcal{R} \mu)_{L^2(\Omega)} 
			+ ( \gamma K_{\nu}^{-1} \lambda, \mu )_{L^2(\Gamma)} \nonumber \\
			&+ ( \hat{\epsilon}_{\max}^{-1} D \cdot [\epsilon (\bm{u}_0 + \mathcal{R} \lambda), \hat{\epsilon} \lambda ], \hat{\epsilon}_{\max}^{-1} D \cdot [\epsilon (\bm{v}_0 + \mathcal{R} \mu), \hat{\epsilon} \mu ] )_{L^2(\Omega)},
		\end{align}
		which induces the norm from \eqref{X-norm cont}.
	\end{lemma}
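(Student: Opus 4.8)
The plan is to establish the two defining features of a Hilbert space separately: that the stated bilinear form is a genuine inner product inducing the norm \eqref{X-norm cont}, and that $\mathscr{X}$ is complete under it. The first part is largely bookkeeping. Symmetry and bilinearity are immediate from the $L^2$-structure of each of the three terms, and setting $[\bm{u}_0,\lambda]=[\bm{v}_0,\mu]$ recovers \eqref{X-norm cont} verbatim; finiteness of each term on the limits we produce follows from the definition of $\mathscr{V}_0$, from Lemma~\ref{EllipticR}, from boundedness of the jump operator \eqref{jump}, and from the upper bound in \eqref{gamma_0}. The only delicate point is definiteness: if $\|[\bm{v}_0,\mu]\|_{\mathscr{X}_{\mathcal{R}}}=0$, the middle term forces $\gamma^{\frac12}K_\nu^{-\frac12}\mu=0$, and here I would invoke the lower bound $\gamma K_\nu^{-1}\ge c_0>0$ from \eqref{gamma_0} to conclude $\mu=0$ in $\mathit{\Lambda}$; the first term then reduces to $\|K^{-\frac12}\bm{v}_0\|_{L^2(\Omega)}=0$, whence $\bm{v}_0=0$ by positive-definiteness of $K$. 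This is precisely the place where the non-degeneracy assumption \eqref{gamma_0} is indispensable, as anticipated in the text.

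For completeness I would take a Cauchy sequence $([\bm{u}_0^{(k)},\lambda^{(k)}])_k$ in $\mathscr{X}_{\mathcal{R}}$ and construct a limit in $\mathscr{X}$. The three terms of \eqref{X-norm cont} show that $K^{-\frac12}(\bm{u}_0^{(k)}+\mathcal{R}\lambda^{(k)})$, $\gamma^{\frac12}K_\nu^{-\frac12}\lambda^{(k)}$, and $\hat{\epsilon}_{\max}^{-1}D\cdot[\epsilon(\bm{u}_0^{(k)}+\mathcal{R}\lambda^{(k)}),\hat{\epsilon}\lambda^{(k)}]$ are each Cauchy in $L^2(\Omega)$, respectively $L^2(\Gamma)$. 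From the middle one together with the lower bound in \eqref{gamma_0}, $\lambda^{(k)}$ is Cauchy in $L^2(\Gamma)=\mathit{\Lambda}$, so $\lambda^{(k)}\to\lambda$. The continuity of the extension operator furnished by Lemma~\ref{EllipticR}, namely \eqref{bound Rlambda} and \eqref{bound div Rlambda}, then gives $\mathcal{R}\lambda^{(k)}\to\mathcal{R}\lambda$ and $\nabla\cdot\epsilon\mathcal{R}\lambda^{(k)}\to\nabla\cdot\epsilon\mathcal{R}\lambda$ in $L^2(\Omega)$. Combining the first Cauchy property with the boundedness and positive-definiteness of $K$ shows that $\bm{u}_0^{(k)}+\mathcal{R}\lambda^{(k)}$ converges in $L^2(\Omega)$, and subtracting the known limit of $\mathcal{R}\lambda^{(k)}$ yields $\bm{u}_0^{(k)}\to\bm{u}_0$ in $L^2(\Omega)$ for a candidate limit $\bm{u}_0$.

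It remains to prove $[\bm{u}_0,\lambda]\in\mathscr{X}$, i.e.\ that $\bm{u}_0\in\mathscr{V}_0$, and this is the step I expect to be the main obstacle, since it is a closedness statement for the weighted divergence rather than a mere continuity estimate. The jump term $\llbracket\hat{\epsilon}\lambda^{(k)}\rrbracket$ converges in $L^2(\Omega)$ because $\lambda^{(k)}\to\lambda$ and the jump operator \eqref{jump} is bounded; since $\hat{\epsilon}_{\max}$ is bounded, convergence of the third (weighted) term upgrades to convergence of $D\cdot[\epsilon(\bm{u}_0^{(k)}+\mathcal{R}\lambda^{(k)}),\hat{\epsilon}\lambda^{(k)}]$ in unweighted $L^2(\Omega)$, and by \eqref{div D} this isolates $\nabla\cdot\epsilon(\bm{u}_0^{(k)}+\mathcal{R}\lambda^{(k)})$ as an $L^2$-convergent sequence. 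On the other hand $\epsilon(\bm{u}_0^{(k)}+\mathcal{R}\lambda^{(k)})\to\epsilon(\bm{u}_0+\mathcal{R}\lambda)$ in $L^2(\Omega)$, so its distributional divergence converges to $\nabla\cdot\epsilon(\bm{u}_0+\mathcal{R}\lambda)$; by uniqueness of distributional limits the $L^2$-limit coincides with it, proving $\nabla\cdot\epsilon(\bm{u}_0+\mathcal{R}\lambda)\in L^2(\Omega)$ and hence, after subtracting $\nabla\cdot\epsilon\mathcal{R}\lambda$, that $\nabla\cdot\epsilon\bm{u}_0\in L^2(\Omega)$. Finally, on each $\Omega^d$ we have $\epsilon\bm{u}_0^{(k)}\to\epsilon\bm{u}_0$ in $H(\operatorname{div};\Omega^d)$, so the normal-trace theorem gives convergence of $\epsilon\bm{u}_0^{(k)}\cdot\bm{\nu}$ in $H^{-\frac12}$ on each portion of $\partial\Omega^d$, and the homogeneous conditions defining $\mathscr{V}_0$ (in particular $\epsilon\bm{u}_0\cdot\bm{\nu}=0$ on $\Gamma^{d-1}$) pass to the limit. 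Therefore $\bm{u}_0\in\mathscr{V}_0$, $[\bm{u}_0,\lambda]\in\mathscr{X}$, and since all three norm components converge we obtain $[\bm{u}_0^{(k)},\lambda^{(k)}]\to[\bm{u}_0,\lambda]$ in $\mathscr{X}_{\mathcal{R}}$, which completes the argument.
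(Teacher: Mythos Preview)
Your proposal is correct and follows essentially the same route as the paper: both arguments extract $L^2$ limits for $\lambda^{(k)}$ (via the lower bound in \eqref{gamma_0}), for the combined flux, and for the weighted $D\cdot$ term, then identify the divergence of the limit distributionally to conclude $\bm{u}_0\in\mathscr{V}_0$. Your treatment is slightly more detailed in two places---you spell out definiteness of the inner product and you verify that the homogeneous normal-trace conditions defining $\mathscr{V}_0$ pass to the limit via $H(\operatorname{div})$ trace continuity---whereas the paper states these points without elaboration.
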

	\begin{proof}
		$\mathscr{X}$ is a linear space and $( \cdot, \cdot)_{\mathscr{X}_{\mathcal{R}}}$ is an inner product. In order to show completeness of $\mathscr{X}$ with respect to the induced norm \eqref{X-norm cont}, we consider a Cauchy sequence $\{ [\bm{v}_{0,k}, \mu_k ] \}_{k = 0}^\infty \subset \mathscr{X}$. In other words, as $l, k \to \infty$, we have
		\begin{align}
			\| [\bm{v}_{0,k} - \bm{v}_{0,l}, \mu_k - \mu_l ] \|_{\mathscr{X}_{\mathcal{R}}}^2 
			\to&\ 0.
		\end{align}

		By completeness of the $L^2$-spaces, there exists a $\bm{v} \in L^2(\Omega)$ such that $\bm{v}_{0, k} + \mathcal{R} \mu_k \to \bm{v}$ and a $\mu \in L^2(\Gamma)$ such that $\mu_k \to \mu$, using \eqref{gamma_0} for the latter. 
		Thus, we can define $\bm{v}_0 = \bm{v} - \mathcal{R} \mu \in L^2(\Omega)$. Using the same argumentation, $\xi \in L^2(\Omega)$ exists such that $\hat{\epsilon}_{\max}^{-1} D \cdot [\epsilon (\bm{v}_{0,k} + \mathcal{R} \mu_k), \hat{\epsilon} \mu_k ] \to \xi$. It remains to show how $\xi$ is connected to $[\bm{v}_0, \mu]$.

		Let us consider a test function $\psi$ with $\psi^0 \in L^2(\Omega^0)$ and $\psi^d \in C_0^\infty(\Omega^d)$ for $d \ge 1$ and derive
		\begin{align}
			(\hat{\epsilon}_{\max}^{-1} D \cdot [\epsilon (\bm{v}_{0,k} &+ \mathcal{R} \mu_k), \hat{\epsilon} \mu_k ], \psi)_{\Omega} = 
			(\hat{\epsilon}_{\max}^{-1} \nabla \cdot \epsilon (\bm{v}_{0,k} + \mathcal{R} \mu_k), \psi)_{\Omega} 
			+ (\hat{\epsilon}_{\max}^{-1} \llbracket \hat{\epsilon} \mu_k \rrbracket, \psi)_{\Omega} \nonumber \\
			=&\ - (\epsilon (\bm{v}_{0,k} + \mathcal{R} \mu_k), \nabla \hat{\epsilon}_{\max}^{-1} \psi)_{\Omega}
			+ (\hat{\epsilon}_{\max}^{-1} \llbracket \hat{\epsilon} \mu_k \rrbracket, \psi)_{\Omega} \nonumber \\
			=&\ -(\bm{v}_{0,k} + \mathcal{R} \mu_k, -\epsilon \hat{\epsilon}_{\max}^{-2} (\nabla \hat{\epsilon}_{\max})\psi + \epsilon \hat{\epsilon}_{\max}^{-1} (\nabla \psi))_{\Omega} 
			+ (\hat{\epsilon}_{\max}^{-1} \llbracket \hat{\epsilon} \mu_k \rrbracket, \psi)_{\Omega} \nonumber \\
			\xrightarrow{k\to \infty}&\ -(\bm{v}_0 + \mathcal{R}\mu, -\epsilon \hat{\epsilon}_{\max}^{-2} (\nabla \hat{\epsilon}_{\max})\psi + \epsilon \hat{\epsilon}_{\max}^{-1} (\nabla \psi))_{\Omega} 
			+ (\hat{\epsilon}_{\max}^{-1} \llbracket \hat{\epsilon} \mu \rrbracket, \psi)_{\Omega} \nonumber \\
			=&\ (\hat{\epsilon}_{\max}^{-1} D \cdot [\epsilon \bm{v}_0, \hat{\epsilon} \mu ], \psi)_{\Omega}
		\end{align}
		Hence, we have shown that $\xi = \hat{\epsilon}_{\max}^{-1} D \cdot [\epsilon \bm{v}_0, \hat{\epsilon} \mu ]$. Moreover, since $\mu \in L^2(\Gamma)$, it follows that $\llbracket \mu \rrbracket \in L^2(\Omega)$. With $\xi \in L^2(\Omega)$, we obtain $\nabla \cdot \epsilon \bm{v}_0 \in L^2(\Omega)$ and therewith $\bm{v}_0 \in \mathscr{V}_0$. Thus, $\mathscr{X}$ is complete.
	\end{proof}

	\begin{remark}
	The above proof exploits the lower bound on $\gamma K_{\nu}^{-1}$ stated in \eqref{gamma_0}. In order to avoid this restriction, weighted Sobolev spaces need to be considered similar to e.g. \cite{girault2015lubrication}.\end{remark}

	\begin{lemma}[Continuity and Ellipticity] \label{ElKerCont}
	The bilinear forms $a$ and $b$ from \eqref{operators} are continuous with respect to the norms given in \eqref{norms cont}. Moreover, if the pair $(\bm{u}_0, \lambda) \in \mathscr{X}$ satisfies
		\begin{align}
			b(\bm{u}_0, \lambda;\ q) = 0 \text{ for all } q \in \mathscr{Q}, \label{assumption on b}
		\end{align}
		then a constant $C_a > 0$ exists such that
		\begin{align}
			a(\bm{u}_0, \lambda;\ \bm{u}_0, \lambda) \geq C_a \enorm{(\bm{u}_0, \lambda, 0)}^2.
		\end{align}
	\end{lemma}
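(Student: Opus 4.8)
The plan is to treat continuity and ellipticity separately; both reduce to the Cauchy--Schwarz inequality once the norms in \eqref{norms cont} are unpacked, so I do not anticipate a genuine analytic obstacle here.

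For continuity of $a$, I would split the form \eqref{operatora} into its two inner products and bound each by Cauchy--Schwarz. The volume term is controlled by $\| K^{-\frac{1}{2}}(\bm{u}_0 + \mathcal{R}\lambda)\|_{L^2(\Omega)}\,\| K^{-\frac{1}{2}}(\bm{v}_0 + \mathcal{R}\mu)\|_{L^2(\Omega)}$ and the interface term by $\|\gamma^{\frac{1}{2}}K_\nu^{-\frac{1}{2}}\lambda\|_{L^2(\Gamma)}\,\|\gamma^{\frac{1}{2}}K_\nu^{-\frac{1}{2}}\mu\|_{L^2(\Gamma)}$; each factor is one of the first two summands of \eqref{X-norm cont}, so a discrete Cauchy--Schwarz over the two contributions yields $|a(\bm{u}_0,\lambda;\bm{v}_0,\mu)| \le \|[\bm{u}_0,\lambda]\|_{\mathscr{X}_{\mathcal{R}}}\|[\bm{v}_0,\mu]\|_{\mathscr{X}_{\mathcal{R}}}$. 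For continuity of $b$, I would insert the weight $\hat{\epsilon}_{\max}$ and rewrite \eqref{operatorb} as $b(\bm{v}_0,\mu;p) = -(\hat{\epsilon}_{\max}p,\ \hat{\epsilon}_{\max}^{-1} D\cdot[\epsilon(\bm{v}_0+\mathcal{R}\mu),\hat{\epsilon}\mu])_{\Omega}$; Cauchy--Schwarz then produces $\|p\|_{\mathscr{Q}}$ from \eqref{p-norm cont} times exactly the third summand of \eqref{X-norm cont}, giving continuity of $b$.

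For the ellipticity estimate, the essential step is to exploit the kernel hypothesis \eqref{assumption on b}. Since $b(\bm{u}_0,\lambda;q) = -(q, D\cdot[\epsilon(\bm{u}_0+\mathcal{R}\lambda),\hat{\epsilon}\lambda])_{\Omega}$ vanishes for every $q \in \mathscr{Q}$, and $\mathscr{Q}$ ranges over all of $L^2(\Omega^d)$, $0 \le d \le n$, I conclude that $D\cdot[\epsilon(\bm{u}_0+\mathcal{R}\lambda),\hat{\epsilon}\lambda] = 0$ almost everywhere in $\Omega$. In particular the third term of the norm \eqref{X-norm cont} is zero, so $\enorm{(\bm{u}_0,\lambda,0)}^2$ reduces to $\|K^{-\frac{1}{2}}(\bm{u}_0+\mathcal{R}\lambda)\|_{L^2(\Omega)}^2 + \|\gamma^{\frac{1}{2}}K_\nu^{-\frac{1}{2}}\lambda\|_{L^2(\Gamma)}^2$.

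Finally I would evaluate $a$ on the diagonal. Setting $\bm{v}_0 = \bm{u}_0$ and $\mu = \lambda$ in \eqref{operatora} gives $a(\bm{u}_0,\lambda;\bm{u}_0,\lambda) = \|K^{-\frac{1}{2}}(\bm{u}_0+\mathcal{R}\lambda)\|_{L^2(\Omega)}^2 + \|\gamma^{\frac{1}{2}}K_\nu^{-\frac{1}{2}}\lambda\|_{L^2(\Gamma)}^2$, which is precisely the reduced expression for $\enorm{(\bm{u}_0,\lambda,0)}^2$ obtained in the previous step. Hence equality holds and the claim follows even with $C_a = 1$. The only point requiring care is recognizing that $a$ is coercive not on the full energy norm but only after the divergence contribution has been annihilated by \eqref{assumption on b} --- which is exactly why the ellipticity is stated conditionally --- together with the bookkeeping of the positive, bounded weight $\hat{\epsilon}_{\max}$ so that each term matches the definitions in \eqref{norms cont}.
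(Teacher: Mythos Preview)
Your proposal is correct and follows essentially the same route as the paper: continuity via Cauchy--Schwarz (with the $\hat{\epsilon}_{\max}$ weighting for $b$), and ellipticity by using \eqref{assumption on b} to annihilate the divergence term so that $a(\bm{u}_0,\lambda;\bm{u}_0,\lambda)$ coincides with the reduced energy norm, yielding $C_a = 1$. The paper is slightly terser on continuity and phrases the kernel step via the inclusion $D\cdot\mathscr{X}\subseteq\mathscr{Q}$, but the substance is identical.
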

	\begin{proof}
		Continuity of the bilinear forms follows directly from the Cauchy-Schwarz inequality. Let us continue with assumption \eqref{assumption on b}, which translates to
		\begin{align*}
			( q, D \cdot [\epsilon (\bm{u}_0 + \mathcal{R} \lambda), \hat{\epsilon} \lambda ] )_{\Omega} &= 0,& \text{for all }q &\in \mathscr{Q}.
		\end{align*}
		Since $D \cdot \mathscr{X} \subseteq \mathscr{Q}$, it follows that 
		\begin{align*}
		 	\| D \cdot [\epsilon (\bm{u}_0 + \mathcal{R} \lambda), \hat{\epsilon} \lambda ] \|_{L^2(\Omega)}^2 = 0.
		\end{align*}
		Using the definition of $a$ from \eqref{operatora} and $\hat{\epsilon}_{\max} > 0$ from \eqref{eps_max bound}, we obtain:
		\begin{align*}
			a(\bm{u}_0, \lambda&;\ \bm{u}_0, \lambda) = 
			(K^{-1} (\bm{u}_0 + \mathcal{R} \lambda), \bm{u}_0 + \mathcal{R} \lambda)_{\Omega}
			+ \langle \frac{\gamma}{K_{\nu}} \lambda, \lambda \rangle_{\Gamma} \nonumber\\
			&= \| K^{-\frac{1}{2}} (\bm{u}_0 + \mathcal{R} \lambda) \|_{L^2(\Omega)}^2 
			+ \| \gamma^{\frac{1}{2}} K_{\nu}^{-1} \lambda \|_{L^2(\Gamma)}^2 
			+ \| \hat{\epsilon}_{\max}^{-1} D \cdot [\epsilon (\bm{u}_0 + \mathcal{R} \lambda), \hat{\epsilon} \lambda ] \|_{L^2(\Omega)}^2 \\
			&= \enorm{(\bm{u}_0, \lambda, 0)}^2.
		\end{align*}
		Thus, the result is shown with $C_a = 1$.
	\end{proof}

	\begin{lemma}[Inf-Sup] \label{lem: inf-sup b cont}
		Let the bilinear form $b$ be defined by equation \eqref{operatorb}. Then there exists a constant $C_b>0$ such that for any given function $p \in \mathscr{Q}$, 
		\begin{align}
			\sup_{(\bm{v}_0, \mu) \in \mathscr{X}} \frac{b(\bm{v}_0, \mu;\ p)}{\enorm{(\bm{v}_0, \mu, 0)} } &\geq C_b \enorm{(0,0, p)}. \label{eq: infsup1} 
		\end{align}
	\end{lemma}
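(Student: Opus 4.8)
The plan is to verify \eqref{eq: infsup1} by exhibiting, for each $p\in\mathscr{Q}$, an explicit pair $(\bm{v}_0,\mu)\in\mathscr{X}$ that attains the supremum up to a constant. Since $\enorm{(\bm{v}_0,\mu,0)}=\|[\bm{v}_0,\mu]\|_{\mathscr{X}_{\mathcal{R}}}$ and $\enorm{(0,0,p)}=\|p\|_{\mathscr{Q}}$, it suffices to construct $(\bm{v}_0,\mu)$ with
\begin{align*}
D\cdot[\epsilon(\bm{v}_0+\mathcal{R}\mu),\hat{\epsilon}\mu] = -\hat{\epsilon}_{\max}^2\, p \quad\text{in }\Omega, \qquad \|[\bm{v}_0,\mu]\|_{\mathscr{X}_{\mathcal{R}}}\lesssim \|p\|_{\mathscr{Q}}.
\end{align*}
The first identity gives $b(\bm{v}_0,\mu;p)=(\hat{\epsilon}_{\max}^2 p,p)_{\Omega}=\|p\|_{\mathscr{Q}}^2$ by \eqref{operatorb} and \eqref{p-norm cont}, and it forces the divergence contribution to \eqref{X-norm cont} to equal $\|\hat{\epsilon}_{\max}^{-1}(\hat{\epsilon}_{\max}^2 p)\|_{L^2(\Omega)}=\|p\|_{\mathscr{Q}}$. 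Dividing then yields \eqref{eq: infsup1} with $C_b$ the reciprocal of the norm constant.

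The construction proceeds by upward recursion on the dimension $d$, using the mortar variable to absorb the source in every subdomain of codimension at least one and reserving a genuine flux solve for the top dimension. For $1\le d\le n-1$ I set $\bm{v}_0^d=0$, so that $\mu^d$ is the only unknown left on $\Omega^d$; recalling the jump \eqref{jump} and the index $j_{\max}$ with $\hat{\epsilon}_{i,j_{\max}}^d=\hat{\epsilon}_{\max}$ from \eqref{eps_max bound}, I place all of $\mu^d$ on that interface by setting, on each $\Omega_i^d$,
\begin{align*}
\mu_{i,j_{\max}}^d = \hat{\epsilon}_{\max}\, p^d + \hat{\epsilon}_{\max}^{-1}\,\nabla\cdot\epsilon\,\mathcal{R}^{d-1}\mu^{d-1}, \qquad \mu_{ij}^d=0 \ \ (j\ne j_{\max}),
\end{align*}
and for $d=0$ the same formula without the $\mathcal{R}^{d-1}$ term (there being no tangential flux). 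A direct computation using \eqref{jump} then gives $\nabla\cdot\epsilon(\bm{v}_0^d+\mathcal{R}^{d-1}\mu^{d-1})+\llbracket\hat{\epsilon}\mu^d\rrbracket=-\hat{\epsilon}_{\max}^2 p^d$ on $\Omega^d$. Finally, on each $\Omega_i^n$ (where $\epsilon=\hat{\epsilon}_{\max}=1$) the mortar $\mu^{n-1}$ is already fixed, so I solve the single divergence problem $\nabla\cdot\bm{v}_0^n=-p^n-\nabla\cdot\mathcal{R}^{n-1}\mu^{n-1}$ with vanishing normal flux on $\Gamma^{n-1}$ and $\partial\Omega_N^n$; this is solvable by a bounded right inverse of the divergence precisely because every $\Omega_i^n$ has $|\partial\Omega_i^n\cap\partial\Omega_D|>0$.

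It then remains to bound $\|[\bm{v}_0,\mu]\|_{\mathscr{X}_{\mathcal{R}}}$ term by term. The mortar term is controlled through \eqref{gamma_0}, which gives $\|\gamma^{1/2}K_\nu^{-1/2}\mu\|_{L^2(\Gamma)}\lesssim\|\mu\|_{L^2(\Gamma)}$; the flux term $\|K^{-1/2}(\bm{v}_0+\mathcal{R}\mu)\|_{L^2(\Omega)}$ reduces on each lower-dimensional subdomain to $\|K^{-1/2}\mathcal{R}^{d-1}\mu^{d-1}\|_{L^2(\Omega^d)}$, estimated by \eqref{bound Rlambda}, plus the top-level $\|\bm{v}_0^n\|_{L^2}$ coming from the right inverse. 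Everything thus reduces to a recursive estimate of $\|\mu^d\|_{L^2(\Gamma^d)}$: from its defining formula, $\|\mu^d\|\le\|\hat{\epsilon}_{\max}p^d\|_{L^2(\Omega^d)}+\|\hat{\epsilon}_{\max}^{-1}\nabla\cdot\epsilon\,\mathcal{R}^{d-1}\mu^{d-1}\|_{L^2(\Omega^d)}$, and bounding the second term by Lemma~\ref{EllipticR}, inequality \eqref{bound div Rlambda}, together with the weight inequality \eqref{eps upperbounded by eps_max} in the form $\|\hat{\epsilon}_{\max}^{-1}\|_{L^\infty}\|\epsilon^{1/2}\|_{L^\infty}\lesssim1$, yields $\|\mu^d\|\lesssim\|\hat{\epsilon}_{\max}p^d\|_{L^2(\Omega^d)}+\|\mu^{d-1}\|_{L^2(\Gamma^{d-1})}$. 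Since there are only finitely many dimensions, unrolling the recursion gives $\|\mu^d\|\lesssim\|p\|_{\mathscr{Q}}$, and collecting the three contributions proves $\|[\bm{v}_0,\mu]\|_{\mathscr{X}_{\mathcal{R}}}\lesssim\|p\|_{\mathscr{Q}}$.

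I anticipate the main obstacle to be the robustness of this last estimate with respect to the aperture. The recursion transports the factors $\hat{\epsilon}_{\max}^{-1}\epsilon_{\max}^{1/2}$ between successive levels, and only the structural assumption \eqref{eps upperbounded by eps_max} keeps their product uniformly bounded as $\epsilon,\gamma\downarrow0$, so that the accumulated constant depends on $n$ but not on the aperture. The second delicate point is closing the hierarchy: the entire cascade of mortar sources must ultimately be balanced by a genuine flux, which is possible only because the top-dimensional subdomains each meet $\partial\Omega_D$, furnishing a bounded right inverse of the divergence with an aperture-independent constant (since there $\epsilon=\hat{\epsilon}_{\max}=1$).
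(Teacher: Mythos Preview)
Your proposal is correct and follows essentially the same route as the paper: both construct $(\bm{v}_0,\mu)$ by setting $\bm{v}_0^d=0$ for $d<n$, loading $\mu^d$ entirely on the interface $j_{\max}$ so that $\llbracket\hat\epsilon\mu^d\rrbracket$ absorbs $\hat\epsilon_{\max}^2 p^d$ plus the extension contribution $\nabla\cdot\epsilon\mathcal{R}^{d-1}\mu^{d-1}$, and then closing the hierarchy at $d=n$ via a divergence solve made possible by the Dirichlet boundary assumption; the recursive bound on $\|\mu^d\|$ via Lemma~\ref{EllipticR} and \eqref{eps upperbounded by eps_max} is likewise identical. The only cosmetic difference is that the paper obtains $\bm{v}_0^n$ from an auxiliary mixed problem whereas you invoke a bounded right inverse of the divergence directly, which is equivalent.
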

	\begin{proof} 
		Assume $p \in \mathscr{Q}$ given. We aim to construct a pair $(\bm{v}_0, \mu) \in \mathscr{X}$ such that the inequality holds. The construction is done by sequentially ascending through the dimensional hierarchy. For convenience, we recall the definition of $b$:
		\begin{align*}	
			-b(\bm{v}_0, \mu;\ p) &=
			( p, \nabla \cdot \epsilon(\bm{v}_0 + \mathcal{R} \mu))_{\Omega}
			+ ( p, \llbracket \hat{\epsilon} \mu \rrbracket )_{\Omega}.
		\end{align*}

		The function $\mu \in \mathit{\Lambda}$ is constructed in a hierarchical manner. Let us start by choosing $\mu^0$ such that the following is satisfied:
		\begin{align}
			\llbracket \hat{\epsilon} \mu^0 \rrbracket &= \hat{\epsilon}_{\max}^2 p^0 , 
			& \| \mu^0 \|_{L^2(\Gamma^0)} \lesssim \| \hat{\epsilon}_{\max} p^0 \|_{L^2(\Omega^0)}. \label{boundmu}
		\end{align}
		We construct a suitable $\mu^0$ for a given index $i$ by finding $j_{\max}$ where $\hat{\epsilon}_{ij_{\max}} = \hat{\epsilon}_{\max}$ and setting $\mu_{ij_{\max}}^0 = \hat{\epsilon}_{\max} p^0$ while choosing $\mu_{ik}^0 = 0$ for $k \ne j_{\max}$.
		
		The next step is to generalize this strategy to $1 \le d \le n-1$. In this, we need to counteract the contribution of the extension operator. Let us construct $\mu^d$ such that it satisfies:
		\begin{align}
			\llbracket \hat{\epsilon} \mu^d \rrbracket &= 
			\hat{\epsilon}_{\max}^2 p^d 
			- \nabla \cdot \epsilon \mathcal{R}^{d-1} \mu^{d-1}.
		\end{align}
		Again, only $\mu_{ij_{\max}}^d$ is non-zero. We now have
		\begin{align}
			\| \mu^d \|_{L^2(\Gamma^d)} &\lesssim 
			\| \hat{\epsilon}_{\max} p^d \|_{L^2(\Omega^d)}
			+ \| \hat{\epsilon}_{\max}^{-1} \nabla \cdot \epsilon \mathcal{R}^{d-1} \mu^{d-1} \|_{L^2(\Omega^d)} \nonumber\\
			&\lesssim
			\| \hat{\epsilon}_{\max} p^d \|_{L^2(\Omega^d)}
			+ \| \hat{\epsilon}_{\max}^{-1} \epsilon_{\max}^{\frac{1}{2}} \mu^{d-1} \|_{L^2(\Omega^d)} \nonumber\\
			&\lesssim \| \hat{\epsilon}_{\max} p^d \|_{L^2(\Omega^d)}
			+ \| \mu^{d-1} \|_{L^2(\Gamma^{d-1})}, \label{boundmu2}
		\end{align}
		where we used Lemma~\ref{EllipticR} and property \eqref{eps upperbounded by eps_max}. 

		Next, we set the functions $\bm{v}_0^d$ with $1 \le d \le n-1$ to zero and continue with $d = n$. Let us construct $\bm{v}_0^n \in \bm{V}_0^n$ and a supplementary variable $p_v^n \in Q^n$ using the following auxiliary problem :
		\begin{align*}
			(K^{-1}  \bm{v}_0^n, \bm{w}_0^n)_{\Omega^n} - 
			(p_v^n, 
			\nabla \cdot \bm{w}_0^n)_{\Omega^n} &= 0 & \forall \bm{w}_0^n &\in \bm{V}_0^n, \\
			(\nabla \cdot  \bm{v}_0^n, q^n)_{\Omega^n} &= (p^n - \nabla \cdot \mathcal{R}^{n-1} \mu^{n-1}, q^n)_{\Omega^n}  & \forall q^n &\in Q^n.
		\end{align*}
		This problem is well-posed since $|\partial \Omega_i^n \cap \partial \Omega_D| > 0$ for each $i$ and thus each subdomain borders on a homogeneous, Dirichlet boundary condition. 
		Standard stability arguments for this mixed formulation combined with the estimate from Lemma~\ref{EllipticR} and the defined $\epsilon = 1$ in $\Omega^n$ then give us
		\begin{align}
			\| K^{-\frac{1}{2}} \bm{v}_0^n \|_{L^2(\Omega^n)}^2 + \| \nabla \cdot \epsilon \bm{v}_0^n \|_{L^2(\Omega^n)}^2
			&\lesssim \| p^n \|_{L^2(\Omega^n)} + \| \nabla \cdot \mathcal{R}^{n-1} \mu^{n-1} \|_{L^2(\Omega^n)} \nonumber\\
			&\lesssim \| p^n \|_{L^2(\Omega^n)} + \| \mu^{n-1} \|_{L^2(\Gamma^{n-1})}. \label{ellipticphi}
		\end{align}

		The choice $(\bm{v}_0, \mu) \in \mathscr{V}_0 \times \mathit{\Lambda}$ is now finalized and two key observations can be made. First, we recall the positive-definiteness of $K_\nu$ and the boundedness of $\mathcal{R}$ given by Lemma~\ref{EllipticR}. Combined with the bounds \eqref{boundmu}, \eqref{boundmu2}, and \eqref{ellipticphi}, we derive using \eqref{eps upperbounded by eps_max}:
		\begin{align}
			\| [\bm{v}_0, \mu ] \|_{\mathscr{X}_{\mathcal{R}}}^2
			\lesssim&\ \| K^{-\frac{1}{2}} (\bm{v}_0 + \mathcal{R} \mu) \|_{L^2(\Omega)}^2 
			+ \| \mu \|_{L^2(\Gamma)}^2 
			+ \| \hat{\epsilon}_{\max}^{-1} D \cdot [\epsilon (\bm{v}_0 + \mathcal{R} \mu), \hat{\epsilon} \mu ] \|_{L^2(\Omega)}^2 \nonumber\\
			\lesssim&\ \| p^n \|_{L^2(\Omega^n)}
			+ \| \mu \|_{L^2(\Gamma)}^2 
			+ \| K^{-\frac{1}{2}} \mathcal{R} \mu\|_{L^2(\Omega)}^2  \nonumber\\
			&+ \| \hat{\epsilon}_{\max}^{-1} \nabla \cdot \epsilon \mathcal{R} \mu\|_{L^2(\Omega)}^2
			+ \| \hat{\epsilon}_{\max}^{-1} \llbracket \hat{\epsilon} \mu \rrbracket \|_{L^2(\Omega)}^2 \nonumber\\
			\lesssim&\ \| p^n \|_{L^2(\Omega^n)}^2
			+ \| \mu \|_{L^2(\Gamma)}^2 \nonumber\\
			\lesssim&\ \| p^n \|_{L^2(\Omega^n)}^2
			+ \textstyle{\sum_{d=0}^{n-1}} \| \hat{\epsilon}_{\max} p^d \|_{L^2(\Omega^d)}^2
			= \enorm{(0,0, p)}^2. \label{result2}
		\end{align}

		Moreover, substitution of the constructed $(\bm{v}_0, \mu)$ in the form $b$ gives us
		\begin{align*}
			(p^d, \nabla \cdot \epsilon (\bm{v}^d_0 + \mathcal{R}^{d-1} \mu^{d-1}))_{\Omega^d}
			+ ( p^d, \llbracket \hat{\epsilon} \mu^d \rrbracket )_{\Omega^d}
			&= \| \hat{\epsilon}_{\max} p^d \|_{L^2(\Omega^d)}^2, & 0 \le d \le n.
		\end{align*}
		Thus, after summation over all dimensions, we obtain
		\begin{align}
			b(\bm{v}_0, \mu;\ p) = \enorm{(0,0, p)}^2. \label{result1}
		\end{align}
		The proof is concluded by combining \eqref{result2} and \eqref{result1}.
	\end{proof}

	We emphasize that the constants used in the previous lemmas are independent of $\gamma$ and $\epsilon$. In fact, the dependency on the aperture is completely reflected in the definition of the norms. 
	\begin{theorem} \label{well-posedness cont}
		Problem \eqref{eq: weakform} is well-posed with respect to the energy norm \eqref{eq: enorm}, i.e. there exists a unique solution such that
		\begin{align}
			\enorm{(\bm{u}_0, \lambda, p)} \lesssim \| \epsilon^{\frac{3}{2}} f \|_{L^2(\Omega)} + \| g \|_{H^{\frac{1}{2}}(\partial \Omega_D)}. 
		\end{align}
	\end{theorem}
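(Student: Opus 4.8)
The plan is to cast \eqref{eq: weakform} as a linear saddle point problem for the pair $(\bm{u}_0, \lambda) \in \mathscr{X}$ and the multiplier $p \in \mathscr{Q}$, and then to invoke the standard saddle point theory \cite{Boffi, brezzi1991mixed}. The hypotheses of that theory are furnished exactly by the three preceding lemmas: Lemma~\ref{Lem: Completeness} ensures that $\mathscr{X}$ and $\mathscr{Q}$ are Hilbert spaces under the norms \eqref{norms cont}; Lemma~\ref{ElKerCont} supplies continuity of $a$ and $b$ together with coercivity of $a$ on the kernel of $b$; and Lemma~\ref{lem: inf-sup b cont} provides the inf-sup condition for $b$. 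Since the associated constants are independent of $\epsilon$ and $\gamma$, the abstract theory yields a unique solution obeying
\begin{align*}
	\enorm{(\bm{u}_0, \lambda, p)} \lesssim \|F\|_{\mathscr{X}'} + \|G\|_{\mathscr{Q}'},
\end{align*}
where $F(\bm{v}_0, \mu) = -\langle g, \epsilon \bm{v}_0 \cdot \bm{\nu} \rangle_{\partial \Omega_D}$ and $G(q) = (\epsilon^2 f, q)_{\Omega}$ denote the two right-hand side functionals of \eqref{eq: weakform}. It then remains only to estimate these two dual norms by the data norms appearing in the statement.

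The forcing functional is the more routine of the two. Factoring $(\epsilon^2 f, q)_{\Omega} = (\epsilon^{\frac{3}{2}} f, \epsilon^{\frac{1}{2}} q)_{\Omega}$ and applying the Cauchy--Schwarz inequality, the weight $\epsilon^{\frac{1}{2}}$ on the second factor is dominated by $\hat{\epsilon}_{\max}$ by virtue of \eqref{eps upperbounded by eps_max}, so that $\|\epsilon^{\frac{1}{2}} q\|_{L^2(\Omega)} \lesssim \|\hat{\epsilon}_{\max} q\|_{L^2(\Omega)} = \|q\|_{\mathscr{Q}}$. Hence $G(q) \lesssim \|\epsilon^{\frac{3}{2}} f\|_{L^2(\Omega)} \|q\|_{\mathscr{Q}}$ and therefore $\|G\|_{\mathscr{Q}'} \lesssim \|\epsilon^{\frac{3}{2}} f\|_{L^2(\Omega)}$, as desired.

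The boundary functional requires more care and is where I expect the main difficulty to lie. Duality on $\partial \Omega_D$ gives
\begin{align*}
	F(\bm{v}_0, \mu) \le \|g\|_{H^{\frac{1}{2}}(\partial \Omega_D)} \, \|\epsilon \bm{v}_0 \cdot \bm{\nu}\|_{H^{-\frac{1}{2}}(\partial \Omega_D)}.
\end{align*}
Because $\partial \Omega_D$ is external and therefore disjoint from the interior interfaces $\Gamma$, the extension $\mathcal{R} \mu$ has vanishing normal trace there, so $\epsilon \bm{v}_0 \cdot \bm{\nu} = \epsilon (\bm{v}_0 + \mathcal{R} \mu) \cdot \bm{\nu}$ on $\partial \Omega_D$; this substitution is essential, since only the combination $\bm{v}_0 + \mathcal{R} \mu$ is controlled by the energy norm. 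Invoking the normal trace theorem then bounds the $H^{-\frac{1}{2}}$-norm by the $H(\operatorname{div})$-norm of $\epsilon (\bm{v}_0 + \mathcal{R} \mu)$, whose $L^2$-part is controlled through the boundedness of $K$ and whose divergence is controlled through the weighted divergence appearing in $\|[\bm{v}_0, \mu]\|_{\mathscr{X}_{\mathcal{R}}}$, using $\hat{\epsilon}_{\max} \le 1$ to absorb the weight and Lemma~\ref{EllipticR} to handle the residual jump contribution $\llbracket \hat{\epsilon} \mu \rrbracket$. This yields $F(\bm{v}_0, \mu) \lesssim \|g\|_{H^{\frac{1}{2}}(\partial \Omega_D)} \enorm{(\bm{v}_0, \mu, 0)}$ and hence $\|F\|_{\mathscr{X}'} \lesssim \|g\|_{H^{\frac{1}{2}}(\partial \Omega_D)}$. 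Combining the two estimates gives the claim. The delicate point throughout is to route every trace and product inequality so that the $\epsilon$- and $\gamma$-weights cancel against those built into \eqref{norms cont}, thereby keeping the final constant genuinely independent of the aperture.
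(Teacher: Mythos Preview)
Your approach is essentially identical to the paper's: invoke saddle point theory via Lemmas~\ref{Lem: Completeness}--\ref{lem: inf-sup b cont}, then bound the two right-hand side functionals exactly as you outline, including the key substitution $\epsilon\bm{v}_0\cdot\bm{\nu} = \epsilon(\bm{v}_0+\mathcal{R}\mu)\cdot\bm{\nu}$ on $\partial\Omega_D$. One small correction: the residual jump term $\|\llbracket\hat{\epsilon}\mu\rrbracket\|_{L^2(\Omega)}$ is not handled by Lemma~\ref{EllipticR} but by assumption~\eqref{gamma_0}, which gives $\|\mu\|_{L^2(\Gamma)} \lesssim \|\gamma^{\frac{1}{2}}K_\nu^{-\frac{1}{2}}\mu\|_{L^2(\Gamma)}$ and hence places the jump under the $\mathscr{X}_{\mathcal{R}}$-norm; this is precisely what the paper invokes at that step.
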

	\begin{proof}
		We firstshow the continuity of the right-hand side of \eqref{eq: weakform}. We consider each term separately:
		\begin{align}
			-\langle g, \epsilon \bm{v}_0 \cdot \bm{\nu} \rangle_{\partial \Omega_D} 
			=&\ -\langle g, \epsilon (\bm{v}_0 + \mathcal{R} \mu) \cdot \bm{\nu} \rangle_{\partial \Omega_D} \nonumber\\
			\lesssim&\ \| g \|_{H^{\frac{1}{2}}(\partial \Omega_D)}
			\| \epsilon (\bm{v}_0 + \mathcal{R} \mu) \|_{H(\operatorname{div}, \Omega)} \nonumber\\
			\lesssim&\ \| g \|_{H^{\frac{1}{2}}(\partial \Omega_D)}
			(\| \epsilon (\bm{v}_0 + \mathcal{R} \mu) \|_{L^2(\Omega)} +
			\| D \cdot [\epsilon (\bm{v}_0 + \mathcal{R} \mu), \hat{\epsilon} \mu ] \|_{L^2(\Omega)} \nonumber\\
			&+
			\| \llbracket \hat{\epsilon} \mu \rrbracket \|_{L^2(\Omega)}) \nonumber\\
			\lesssim&\ \| g \|_{H^{\frac{1}{2}}(\partial \Omega_D)}
			\| [\bm{v}_0, \mu ] \|_{\mathscr{X}_{\mathcal{R}}}.
		\end{align}
		Here, we used assumption \eqref{gamma_0} and $\hat{\epsilon}_{\max} > 0$ in the final step. For the second term, we use \eqref{eps upperbounded by eps_max} to derive:
		\begin{align}
			(\epsilon^2 f,q)_{\Omega}
			&\le \| \hat{\epsilon}_{\max}^{-1} \epsilon^2 f \|_{L^2(\Omega)} \| \hat{\epsilon}_{\max} q \|_{L^2(\Omega)}
			\le \| \epsilon^{\frac{3}{2}} f \|_{L^2(\Omega)} \| q \|_{\mathscr{Q}}.
		\end{align}
		Using Lemmas~\ref{Lem: Completeness} to \ref{lem: inf-sup b cont}, the well-posedness of problem \eqref{eq: weakform} follows from saddle-point problem theory, see e.g. \cite{Boffi}. 
	\end{proof}

\section{Discretization} 
\label{sec:discretization}

	In this section, the discretization of problem \eqref{eq: weakform} is considered. First, the requirements on the choice of discrete function spaces are stated. We then continue by showing stability for the discrete problem and end the section with a priori error estimates. 

\subsection{Discrete Spaces} 
\label{sub:discrete_spaces}

	In order to properly define the discrete equivalent of \eqref{eq: weakform}, we start by introducing the mesh. Let $\mathcal{T}^d_{\Omega}$ with $0\leq d \leq n$ be a finite element partition of $\Omega^d$ made up of $d$-dimensional, shape-regular, simplicial elements. Secondly, let $\mathcal{T}^d_{\Gamma}$ with $0\leq d \leq n - 1$ be a partition of $\Gamma^d$ consisting of $d$-dimensional simplices. We will commonly refer to $\mathcal{T}^d_{\Gamma}$ as the mortar mesh. Furthermore, let $h$ denote the mesh size.

	The discrete analogues of the function spaces are constructed using the dimensional hierarchy. Let us introduce $\bm{V}^d_h \subset \bm{V}^d$ and $\bm{V}^d_{0,h} \subset \bm{V}^d_0$ for $1 \le d \le n$ and $Q^d_h \subset Q^d$ with $0 \le d \le n$. Finally, the mortar space is given by
	\begin{align*}
		\Lambda^d_{ij, h} &\subset L^2(\Gamma_{ij}^d), & \Lambda^d_h &= \bigoplus_{i=1}^{N^d} \bigoplus_{j \in \mathcal{J}_i^d} \Lambda^d_{ij, h}, & 0 &\le d \le n-1.
	\end{align*}
	The discrete, dimensionally composite function spaces are then defined in analogy to \eqref{eq: direct_sum} as
	\begin{align*}
		\mathscr{V}_h &= \bigoplus_{d=1}^n \bm{V}_h^d, 
		& \mathscr{V}_{0,h} &= \bigoplus_{d=1}^n \bm{V}_{0,h}^d, 
		& \mathit{\Lambda}_h &= \bigoplus_{d=0}^{n-1} \Lambda_h^d, 
		& \mathscr{Q}_h &= \bigoplus_{d=0}^n Q_h^d.
	\end{align*}
	Finally, the combined space containing the fluxes is given by
	\begin{align*}
		\mathscr{X}_h = \mathscr{V}_{0, h} \times \mathit{\Lambda}_h.
	\end{align*}

	Before we continue with the analysis in Subsection~\ref{sub:a_priori_estimates}, let us present a total of four conditions on the discrete function spaces. The first is necessary, while the remaining conditions provide attractive features of the numerical method.

	First, it is essential that the pair $\mathscr{V}_{h} \times \mathscr{Q}_h$ is chosen such that
	\begin{align}
		Q^d_h &= \nabla \cdot V_{h}^d, & 1 &\le d \le n. \label{Fortin}
	\end{align}
	This can be satisfied by choosing any of the usual mixed finite element pairs \cite{Arbogast,Boffi}. 

	The second condition concerns the space $\mathit{\Lambda}_h$. For simplicity, we assume that the function spaces defined on different sides bordering $\Omega^d_i$ are the same. In other words, we have
	\begin{align*}
		\Lambda^d_{ij,h} &= \Lambda^d_{ik,h}, & j, k \in \mathcal{J}_i^d.
	\end{align*} 

	Third, conventional mortar methods (e.g. \cite{Arbogast}) require that the mortar mesh $\mathcal{T}^d_{\Gamma}$ is a sufficiently coarse partition of $\Gamma^d$ when compared to $\mathcal{T}^{d+1}_{\Omega}$. Let us define $\hat{\Pi}_h^d : \Lambda_h^d \to \bm{V}_h^{d+1} \cdot \bm{\nu}|_{\Gamma^d}$ as the $L^2$-projection from the mortar mesh onto the trace of the bordering, higher-dimensional mesh. In the unified setting, the projection $\hat{\Pi}_h$ is then given by $\bigoplus_{d = 0}^{n-1} \hat{\Pi}_h^d$ and the mortar condition can be described for $\mu_h \in \mathit{\Lambda}_h$ as
	\begin{align}
		\|\hat{\Pi}_h \mu_h \|_{L^2(\Gamma)} \gtrsim \|\mu_h \|_{L^2(\Gamma)}. \label{eq: CR}
	\end{align}
	This can easily be satisfied in case of matching grids by aligning the mortar grid with the trace of the surrounding mesh. Otherwise, it suffices to choose $\mathcal{T}^d_{\Gamma}$ as slightly coarser.

	As shown in \cite{Roberts}, the introduction of a flow problem inside the fracture guarantees a unique solution even if the mortar mesh is finer, thus removing the need for \eqref{eq: CR}. The same principle applies here. However, in this work we choose the mortar variable as the normal flux, instead of the fracture pressure, in order to have a stronger notion of mass conservation. Due to this choice, the control on the $L^2$-norm of the mortar variable is weighted with $\gamma$, as is apparent from \eqref{X-norm cont}. Since $\gamma$ is typically small, the main control on $\mu$ comes from $\mathcal{R}_h \mu$, which only sees $\hat{\Pi}_h \mu_h$ as boundary data. Thus, in order to eliminate the possible non-zero kernel of $\hat{\Pi}_h$, which may result in numerical oscillations of the mortar flux, it is advantageous to satisfy \eqref{eq: CR} in practice.

	Fourth, we let all lower-dimensional meshes match with the corresponding mortar mesh, such that
	\begin{align}
		\llbracket \mathit{\Lambda}_{h} \rrbracket &= \mathscr{Q}_h. \label{Q_h equals jump Lambda_h}
	\end{align}

	In the discretized setting, we have need of a discrete extension operator $\mathcal{R}_h: \mathit{\Lambda}_h \to \mathscr{V}_h$. In accordance with \eqref{eq: ExtensionR}, the function $\mathcal{R}_h \mu$ is such that $\mathcal{R}_h \mu \cdot \bm{\nu}|_{\Gamma} = \hat{\Pi}_h \mu$ and has zero normal trace on the remaining boundaries. A particularly attractive choice is to construct $\mathcal{R}_h \mu$ with a predefined support near the boundary. The bounded support then results in a beneficial sparsity pattern.

	To finish the section, we explicitly state a family of discrete function spaces which satisfy all conditions on simplicial elements for $n = 3$ and polynomial order $k$. Any choice of stable mixed spaces is valid and our choice is given by
		\begin{align}
	 		\label{Lowest order choice}
			\mathscr{V}_h &= 
			\textstyle{\bigoplus_{d = 1}^{3}} \mathbb{RT}_k(\mathcal{T}_{\Omega}^d), &
			\mathscr{Q}_h &= \textstyle{\bigoplus_{d = 0}^{3}} \mathbb{P}_k(\mathcal{T}_{\Omega}^d), & 
			\mathit{\Lambda}_{h} &= \textstyle{\bigoplus_{d = 0}^{2}} \mathbb{P}_{k}(\mathcal{T}_{\Gamma}^d).
		\end{align}
	Here, $\mathbb{RT}_k$ represents the $k$-th order Raviart-Thomas(-Nedelec) space \cite{Nedelec,Raviart-Thomas} which corresponds with continuous Lagrange elements of order $k+1$ for $d = 1$. The space $\mathbb{P}_k$ then represents $k$-th order discontinuous Lagrange elements. As is required, we choose $\mathscr{V}_{0,h} = \mathscr{V}_{0} \cap \mathscr{V}_{h}$ with zero normal trace on $\Gamma$. The function spaces corresponding to $k = 0$ will be referred to as the lowest-order choice.

	With the chosen discrete spaces, we are ready to define the discrete functionals. In the remainder, we will omit the index $h$ in most places for notational simplicity.
	\begin{subequations} \label{operators_h}
		\begin{align}
			a_h(\bm{u}_0, \lambda;\ \bm{v}_0, \mu) &= 
			(K^{-1} (\bm{u}_0 + \mathcal{R}_h \lambda), \bm{v}_0 + \mathcal{R}_h \mu)_{\Omega}
			+ \langle \frac{\gamma}{K_{\nu}} \lambda, \mu \rangle_{\Gamma}, \\
			b_h(\bm{v}_0, \mu;\ p) &= - ( p, D \cdot [\epsilon (\bm{v}_0 + \mathcal{R}_h \mu), \hat{\epsilon} \mu])_{\Omega}. \label{operatorb_h}
		\end{align}
	\end{subequations}

	The finite element problem associated with \eqref{eq: weakform} is now formulated as follows:
	Find $(\bm{u}_0, \lambda, p) \in \mathscr{V}_{0,h} \times \mathit{\Lambda}_h \times \mathscr{Q}_h$ such that 
	\begin{align}
		a_h(\bm{u}_0, \lambda;\ \bm{v}_0, \mu) + 
				b_h(\bm{v}_0, \mu;\ p) 
				- b_h(\bm{u}_0, \lambda;\ q) &= -\langle g, \epsilon \bm{v}_0 \cdot \bm{\nu} \rangle_{\partial \Omega_D} + (\epsilon^2 f,q)_{\Omega},  \label{weakform_h}
	\end{align}
	for all $(\bm{v}_0, \mu, q) \in \mathscr{V}_{0,h} \times \mathit{\Lambda}_h \times \mathscr{Q}_h$.

\subsection{Stability and Convergence} 
\label{sub:a_priori_estimates}

	With the choice of discrete function spaces and the formulation of the finite element problem \eqref{weakform_h} in Subsection~\ref{sub:discrete_spaces}, we continue to study the stability of the scheme. The analysis is similar to that presented in Subsection~\ref{sec:well_posedness} and we particularly emphasize the issues arising from the discretization in this separate presentation. 

	First, the incorporation of varying apertures requires some additional attention. For this purpose, we introduce the maximum value of $\epsilon$ on each element of the grid. More specifically, let us define $\epsilon_{e}$ as a piecewise constant function such that
	\begin{align}
		\epsilon_{e} &= \sup_{x \in e_\Omega} \epsilon(x) & \text{ on each } e_\Omega \in \mathcal{T}_{\Omega}. \label{def eps_Gamma}
	\end{align}
	By definition, this parameter equals one in $\Omega^n$.

	Secondly, for the purpose of the analysis, a specific discrete extension operator $\mathcal{R}_h : \mathit{\Lambda}_h \to \mathscr{V}_h$ is constructed similar to $\mathcal{R}$ from \eqref{Rtilde}. In particular, let the pair $(\mathcal{R}_h^{d-1} \lambda^{d-1}, p_\lambda^d) \in \bm{V}_h^d \times Q_h^d$ with $1 \le d \le n$ be the solution to the following problem:
		\begin{subequations} \label{Rhtilde}
			\begin{align}
				(K^{-1} \mathcal{R}_h^{d-1} \lambda^{d-1}, \bm{v}_0^d)_{\Omega^d} - 
				(p_\lambda^d, 
				\nabla \cdot \epsilon\bm{v}_0^d)_{\Omega^d} &= 0 &\forall \bm{v}_0^d &\in \bm{V}_{0,h}^d \label{Discrete Gradient} \\
				(\nabla \cdot \epsilon\mathcal{R}_h^{d-1} \lambda^{d-1}, q^d)_{\Omega^d} 
				+ (\epsilon_{e} p_{\lambda}^d, q^d)_{\Omega^d}&= 0 & \forall q^d &\in Q_h^d. \label{same as p_lambda}
			\end{align}
		\end{subequations}
	The corresponding boundary conditions are chosen to comply with the desired condition given in equation \eqref{eq: ExtensionR}, namely:
		\begin{subequations} \label{Rhtilde bcs}
			\begin{align}
				\mathcal{R}_h^{d-1} \lambda^{d-1} \cdot \bm{\nu} &= \hat{\Pi}_h^{d-1} \lambda^{d-1} &\text{on } &\Gamma^{d-1}, \\
				\mathcal{R}_h^{d-1} \lambda^{d-1} \cdot \bm{\nu} &= 0 &\text{on } &\partial \Omega^d \backslash \Gamma^{d-1}.
			\end{align}
		\end{subequations}

	The estimates on $\mathcal{R}_h^{d-1} \lambda^{d-1}$, analogous to Lemma~\ref{EllipticR} are given by the following lemma.
	\begin{lemma} \label{lem: Rhtilde}
	The solution $(\mathcal{R}_h^{d-1} \lambda^{d-1}, p_{\lambda}^d) \in \bm{V}_h^d \times Q_h^d$ to problem \eqref{Rhtilde} with boundary conditions given by \eqref{Rhtilde bcs} satisfies the following bounds:
		\begin{subequations} \label{Rhtilde bounds}
			\begin{align}
				\| K^{-\frac{1}{2}} \mathcal{R}_h^{d-1} \lambda^{d-1} \|_{L^2(\Omega^d)} + \| \epsilon_{e}^{\frac{1}{2}} p_{\lambda}^d \|_{L^2(\Omega^d)}
				&\lesssim \| \lambda^{d-1} \|_{L^2(\Gamma^{d-1})} \label{bound Rhlambda}\\
				\| \Pi_{Q_h}^d \nabla \cdot \epsilon \mathcal{R}_h^{d-1} \lambda^{d-1} \|_{L^2(\Omega^d)} &\lesssim \| \epsilon_{\max}^{\frac{1}{2}} \lambda^{d-1} \|_{L^2(\Gamma^{d-1})} \label{bound div Rhlambda}.
			\end{align}
		\end{subequations}
		with $\epsilon_{\max}|_{\Omega_i^d} = \| \epsilon \|_{L^{\infty}(\Omega_i^d)}$ and $\Pi_{Q_h}^d$ the $L^2$-projection onto $Q_h^d$.
	\end{lemma}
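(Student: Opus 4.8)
The plan is to mirror the argument used for the continuous operator in Lemma~\ref{EllipticR}, adapting it to the discrete spaces, to the piecewise-constant weight $\epsilon_e$ from \eqref{def eps_Gamma}, and to the projected divergence appearing in \eqref{bound div Rhlambda}. The only genuinely new ingredient is a discrete, $H(\operatorname{div})$-stable extension of the mortar data. Concretely, I would first produce a field $\bm{v}_\lambda^d \in \bm{V}_h^d$ whose normal trace equals $\hat{\Pi}_h^{d-1}\lambda^{d-1}$ on $\Gamma^{d-1}$, vanishes on $\partial\Omega^d \backslash \Gamma^{d-1}$, and satisfies $\| \bm{v}_\lambda^d \|_{L^2(\Omega^d)}^2 + \| \nabla \cdot \bm{v}_\lambda^d \|_{L^2(\Omega^d)}^2 \lesssim \| \lambda^{d-1} \|_{L^2(\Gamma^{d-1})}^2$ with a constant independent of $h$, $\epsilon$ and $\gamma$. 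The natural construction takes the continuous $H(\operatorname{div})$-extension of $\lambda^{d-1}$ from \cite{quarteroni1999domain} (already used in Lemma~\ref{EllipticR}) and applies the canonical interpolation onto $\bm{V}_h^d$: the commuting-diagram property forces the discrete normal trace to be exactly $\hat{\Pi}_h^{d-1}(\bm{v}\cdot\bm{\nu}) = \hat{\Pi}_h^{d-1}\lambda^{d-1}$ on $\Gamma^{d-1}$ and zero elsewhere, matching \eqref{Rhtilde bcs}, while $H(\operatorname{div})$-stability of the interpolation transfers the bound. The slightly higher regularity of the continuous extension (noted after \eqref{eq: ExtensionR}) is what makes this interpolation admissible.

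With $\bm{v}_\lambda^d$ in hand, I would prove \eqref{bound Rhlambda} as in the continuous case. Since $\mathcal{R}_h^{d-1}\lambda^{d-1}$ and $\bm{v}_\lambda^d$ share the normal trace $\hat{\Pi}_h^{d-1}\lambda^{d-1}$ on $\Gamma^{d-1}$ and both vanish on the remainder of $\partial\Omega^d$, the difference $\bm{v}_0^d := \mathcal{R}_h^{d-1}\lambda^{d-1} - \bm{v}_\lambda^d$ lies in $\bm{V}_{0,h}^d$ and is admissible in \eqref{Discrete Gradient}. Testing \eqref{Discrete Gradient} with this $\bm{v}_0^d$ and \eqref{same as p_lambda} with $q^d = p_\lambda^d$, then adding, the two occurrences of $(p_\lambda^d, \nabla \cdot \epsilon \mathcal{R}_h^{d-1}\lambda^{d-1})_{\Omega^d}$ cancel (they pair an $L^2$ function against $p_\lambda^d \in Q_h^d$), leaving the energy identity
\begin{align*}
\| K^{-\frac{1}{2}} \mathcal{R}_h^{d-1}\lambda^{d-1} \|_{L^2(\Omega^d)}^2 + \| \epsilon_e^{\frac{1}{2}} p_\lambda^d \|_{L^2(\Omega^d)}^2 = (K^{-1}\mathcal{R}_h^{d-1}\lambda^{d-1}, \bm{v}_\lambda^d)_{\Omega^d} - (p_\lambda^d, \nabla \cdot \epsilon \bm{v}_\lambda^d)_{\Omega^d}.
\end{align*}
I would then expand $\nabla \cdot \epsilon \bm{v}_\lambda^d = (\nabla \epsilon) \cdot \bm{v}_\lambda^d + \epsilon \nabla \cdot \bm{v}_\lambda^d$ and estimate the right-hand side by Cauchy--Schwarz, using $|\nabla \epsilon| \lesssim \epsilon^{\frac{1}{2}} \le \epsilon_e^{\frac{1}{2}}$ from \eqref{grad eps bound}, the pointwise inequality $\epsilon \le \epsilon_e$, boundedness of $K^{-1}$, and the extension bound, to obtain a right-hand side of the form $(\| K^{-\frac{1}{2}} \mathcal{R}_h^{d-1}\lambda^{d-1} \|_{L^2(\Omega^d)} + \| \epsilon_e^{\frac{1}{2}} p_\lambda^d \|_{L^2(\Omega^d)}) \, \| \lambda^{d-1} \|_{L^2(\Gamma^{d-1})}$; \eqref{bound Rhlambda} follows by Young's inequality.

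For \eqref{bound div Rhlambda} I would test \eqref{same as p_lambda} with $q^d = \Pi_{Q_h}^d \nabla \cdot \epsilon \mathcal{R}_h^{d-1}\lambda^{d-1} \in Q_h^d$. Because $\Pi_{Q_h}^d$ is the $L^2$-projection (self-adjoint and idempotent), the first term collapses to $\| \Pi_{Q_h}^d \nabla \cdot \epsilon \mathcal{R}_h^{d-1}\lambda^{d-1} \|_{L^2(\Omega^d)}^2$, so Cauchy--Schwarz gives $\| \Pi_{Q_h}^d \nabla \cdot \epsilon \mathcal{R}_h^{d-1}\lambda^{d-1} \|_{L^2(\Omega^d)} \le \| \epsilon_e p_\lambda^d \|_{L^2(\Omega^d)}$. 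The projection is precisely what legitimizes this step: for spatially varying $\epsilon$ the field $\nabla \cdot \epsilon \mathcal{R}_h^{d-1}\lambda^{d-1}$ need not lie in $Q_h^d$, whereas \eqref{Fortin} only guarantees $\nabla \cdot \bm{V}_h^d = Q_h^d$. Finally, since $\epsilon_e \le \epsilon_{\max}$ and $\epsilon_{\max}$ is constant on each $\Omega_i^d$, I would write $\| \epsilon_e p_\lambda^d \| \le \| \epsilon_{\max}^{\frac{1}{2}} \epsilon_e^{\frac{1}{2}} p_\lambda^d \|$, factor $\epsilon_{\max}^{\frac{1}{2}}$ out subdomain by subdomain, and invoke \eqref{bound Rhlambda} locally to reach the claimed bound $\lesssim \| \epsilon_{\max}^{\frac{1}{2}} \lambda^{d-1} \|_{L^2(\Gamma^{d-1})}$.

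I expect the discrete extension of the first paragraph to be the main obstacle: the $H(\operatorname{div})$ bound must be uniform in $h$, $\epsilon$ and $\gamma$, and the normal trace must match $\hat{\Pi}_h^{d-1}\lambda^{d-1}$ exactly. A naive construction from a single layer of Raviart--Thomas face functions reproduces the trace but scales like $h^{-1/2}$ in the divergence, so it is essential to interpolate a genuine continuous extension and rely on the commuting property; this is also the step where the regularity of the continuous extension is used. The remaining estimates are then routine adaptations of Lemma~\ref{EllipticR}, the two substantive bookkeeping changes being the replacement of $\epsilon$ by the piecewise constant $\epsilon_e$ in the auxiliary equation \eqref{same as p_lambda} and the appearance of $\Pi_{Q_h}^d$ in the divergence estimate.
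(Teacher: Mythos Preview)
Your proposal is correct and follows essentially the same route as the paper: build the discrete lifting $\bm{v}_\lambda^d$ by applying the Fortin interpolator $\Pi_{V_h}^d$ to a continuous $H(\operatorname{div})$-extension (with $H^s$ regularity so that $\Pi_{V_h}^d$ is continuous), test \eqref{Rhtilde} with $(\mathcal{R}_h^{d-1}\lambda^{d-1}-\bm{v}_\lambda^d,\ p_\lambda^d)$ to obtain the energy identity, and then test \eqref{same as p_lambda} with $q^d=\Pi_{Q_h}^d\nabla\cdot\epsilon\mathcal{R}_h^{d-1}\lambda^{d-1}$ together with $\epsilon_e\le\epsilon_{\max}$ for \eqref{bound div Rhlambda}. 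The only inconsequential variation is that the paper first applies $\hat{\Pi}_h^{d-1}$ to the boundary datum before extending, whereas you extend $\lambda^{d-1}$ directly and let the interpolator produce $\hat{\Pi}_h^{d-1}\lambda^{d-1}$ on $\Gamma^{d-1}$; both yield the same trace and the same bound.
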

	\begin{proof}
		Let $\Pi_{V_h}^d$ be the Fortin interpolator related to $\bm{V}_h^d$ \cite{Boffi}. Moreover, let $\bm{v}_{\lambda, h}^d = \Pi_{V_h}^d \bm{v}_{\lambda}^d$ with $\bm{v}_{\lambda}^d \in \bm{V}^d$ such that
		\begin{align*}
			\bm{v}_{\lambda}^d \cdot \bm{\nu} &= \hat{\Pi}_h^{d-1} \lambda^{d-1} &\text{on } &\Gamma^{d-1}, \\
				\bm{v}_{\lambda}^d \cdot \bm{\nu} &= 0 &\text{on } &\partial \Omega^d \backslash \Gamma^{d-1},
		\end{align*}
		while also satisfying for some $s > 0$ (see \cite{quarteroni1999domain})
		\begin{align*}
			\| \bm{v}_{\lambda}^d \|_{H^s(\Omega^d)} + \| \nabla \cdot \bm{v}_{\lambda}^d \|_{L^2(\Omega^d)}
				&\lesssim \| \lambda^{d-1} \|_{L^2(\Gamma^{d-1})}.
		\end{align*}
		It follows that 
		\begin{align*}
			\bm{v}_{\lambda, h}^d \cdot \bm{\nu} 
			&= (\Pi_{V_h}^d \bm{v}_{\lambda}^d) \cdot \bm{\nu}
			= \hat{\Pi}_h^{d-1} (\bm{v}_{\lambda}^d \cdot \bm{\nu})
			= \hat{\Pi}_h^{d-1} \lambda^{d-1} &\text{on } &\Gamma^{d-1}.
		\end{align*}
		Hence, we may set the test function $\bm{v}_0^d = \mathcal{R}_h^{d-1} \lambda^{d-1} - \bm{v}_{\lambda, h}^d \in \bm{V}_{0, h}$. By continuity of the interpolator $\Pi_{V_h}^d$, see \cite{Arbogast}, 
		\begin{align} \label{bound a}
			\| \bm{v}_{\lambda, h}^d \|_{L^2(\Omega^d)} = 
			\| \Pi_{V_h}^d \bm{v}_{\lambda}^d \|_{L^2(\Omega^d)} \lesssim
			\| \bm{v}_{\lambda}^d \|_{H^s(\Omega^d)} + \| \nabla \cdot \bm{v}_{\lambda}^d \|_{L^2(\Omega^d)}
			&\lesssim \| \lambda^{d-1} \|_{L^2(\Gamma^{d-1})}
		\end{align}
		Furthermore, the interpolator has the property $\nabla \cdot \Pi_{V_h}^d \bm{v}_{\lambda}^d = \Pi_{Q_h}^d \nabla \cdot \bm{v}_{\lambda}^d$, with $\Pi_{Q_h}^d$ the $L^2$-projection onto $Q_h^d$. From this, we obtain
		\begin{align} \label{bound b}
			\| \nabla \cdot \bm{v}_{\lambda, h}^d \|_{L^2(\Omega^d)}
			= \| \nabla \cdot \Pi_{V_h}^d \bm{v}_{\lambda}^d \|_{L^2(\Omega^d)} 
			\le \| \nabla \cdot \bm{v}_{\lambda}^d \|_{L^2(\Omega^d)}
			&\lesssim \| \lambda^{d-1} \|_{L^2(\Gamma^{d-1})}
		\end{align}
		Now, let us set the test functions in \eqref{Rhtilde} as $\bm{v}_0^d = \mathcal{R}_h^{d-1} \lambda^{d-1} - \bm{v}_{\lambda, h}^d$ and $q^d = p_{\lambda}^d$. This gives us, as in equation \eqref{first bound cont}:
		\begin{align*}
			\| K^{-\frac{1}{2}} \mathcal{R}_h^{d-1} \lambda^{d-1} \|_{L^2(\Omega^d)}^2 +& \| \epsilon_{e}^{\frac{1}{2}} p_{\lambda}^d \|_{L^2(\Omega^d)}^2
			= (K^{-1} \mathcal{R}_h^{d-1} \lambda^{d-1}, \bm{v}_{\lambda, h}^d)_{\Omega^d} - 
				(p_\lambda^d, 
				\nabla \cdot \epsilon\bm{v}_{\lambda, h}^d)_{\Omega^d} \\
			&\lesssim 
			(\| K^{-\frac{1}{2}} \mathcal{R}_h^{d-1} \lambda^{d-1} \|_{L^2(\Omega^d)} + \| \epsilon_{e}^{\frac{1}{2}} p_{\lambda}^d \|_{L^2(\Omega^d)}) \
			\| \lambda^{d-1} \|_{L^2(\Gamma^{d-1})}.
		\end{align*}
		Here, we have used \eqref{bound a}, \eqref{bound b}, and the fact that $\epsilon(x) \le \epsilon_e(x)$ for all $x \in \Omega$. The first bound \eqref{bound Rhlambda} is now shown. Secondly, \eqref{bound div Rhlambda} follows by setting $q^d = \Pi_{Q_h}^d \nabla \cdot \epsilon \mathcal{R}_h^{d-1} \lambda^{d-1}$ and using \eqref{bound Rhlambda}:
		\begin{align*}
			\| \Pi_{Q_h}^d \nabla \cdot \epsilon \mathcal{R}_h^{d-1} \lambda^{d-1} \|_{L^2(\Omega^d)}
			\le \| \epsilon_e p_\lambda^d \|_{L^2(\Omega^d)}
			= \| \epsilon_e^{\frac{1}{2}} \epsilon_e^{\frac{1}{2}} p_\lambda^d \|_{L^2(\Omega^d)}
			&\le \| \epsilon_{\max}^{\frac{1}{2}} \lambda^{d-1} \|_{L^2(\Gamma^{d-1})}.
		\end{align*}
	\end{proof}

	We emphasize once more that this extension operator is only constructed for the sake of the analysis. Since we are continually interested in the combined flux $\bm{u}_0 + \mathcal{R}_h \lambda$ instead of the individual parts, it is generally more practical to choose $\mathcal{R}_h$ as any preferred extension operator which incorporates the essential boundary conditions.

	Let us continue by defining the norms in the discrete setting, which differ only slightly from the norms defined in \eqref{norms cont}.
	For $[\bm{v}_0, \mu] \in \mathscr{X}_h$, let us introduce the following norm:
		\begin{align}
			\| [\bm{v}_0, \mu ] \|_{\mathscr{X}_{\mathcal{R}, h}}^2 =& \ 
			\| K^{-\frac{1}{2}} (\bm{v}_0 + \mathcal{R}_h \mu) \|_{L^2(\Omega)}^2 
			+ \| \gamma^{\frac{1}{2}} K_{\nu}^{-\frac{1}{2}}\mu \|_{L^2(\Gamma)}^2 \nonumber\\
			&+ \| \Pi_{\mathscr{Q}_h} D \cdot [\epsilon (\bm{v}_0 + \mathcal{R}_h \mu), \hat{\epsilon} \mu ] \|_{L^2(\Omega)}^2. \label{VLnorms}
		\end{align}
	Here, $\Pi_{\mathscr{Q}_h}$ is the $L^2$-projection onto $\mathscr{Q}_h$. 
	The flexibility in the choice of $\mathcal{R}_h$ is apparent in this norm by noting that it depends on the combined flux, instead of its separate parts $\bm{u}_0$ and $\mathcal{R}_h \lambda$. The norm on the pressure $q \in \mathscr{Q}_h$ remains unchanged, and we recall it for convenience:
	\begin{align}
		\|q\|_{\mathscr{Q}_h} &= 
		\|q\|_{\mathscr{Q}} = \| \hat{\epsilon}_{\max} q \|_{L^2(\Omega)}. \label{Pnorm}
	\end{align}

	The discrete energy norm is formed as the combination of \eqref{VLnorms} and \eqref{Pnorm}:
	\begin{align}
		\enorm{(\bm{u}_0, \lambda, p)}_h^2 &= \| [\bm{u}_0, \lambda ] \|_{\mathscr{X}_{\mathcal{R}, h}}^2 + \| p \|_{\mathscr{Q}_h}^2. \label{eq: enormh}
	\end{align}

	Next, this energy norm is used to prove an inf-sup condition on $b_h$, as shown in the following Lemma.


	\begin{lemma}[Inf-Sup] \label{lem: inf-sup b}
		Let the bilinear form $b_h$ be defined by equation \eqref{operatorb_h} and let the function spaces $\mathscr{V}_{0,h}$, $\mathit{\Lambda}_h$, and $\mathscr{Q}_h$ comply with the restrictions from Subsection~\ref{sub:discrete_spaces}. Then there exists a constant $C_{b_h}>0$, independent of $\gamma$, $\epsilon$, and $h$ such that for any given function $p \in \mathscr{Q}_h$, 
		\begin{align}
			\sup_{(\bm{v}_0, \mu) \in \mathscr{X}_{h}} \frac{b_h(\bm{v}_0, \mu;\ p)}{\enorm{(\bm{v}_0, \mu, 0)}_h } &\geq C_{b_h} \enorm{(0,0, p)}_h.
		\end{align}
	\end{lemma}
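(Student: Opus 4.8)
The plan is to mimic the continuous inf-sup argument from Lemma~\ref{lem: inf-sup b cont}, constructing an explicit test pair $(\bm{v}_0,\mu)\in\mathscr{X}_h$ for a given $p\in\mathscr{Q}_h$ by ascending through the dimensional hierarchy, but now respecting the discrete structure. Recall that, since $\Pi_{\mathscr{Q}_h}$ appears in the norm \eqref{VLnorms} and $D\cdot[\epsilon(\bm v_0+\mathcal R_h\mu),\hat\epsilon\mu]$ already lies in $\mathscr{Q}_h$ by the Fortin property \eqref{Fortin} together with \eqref{Q_h equals jump Lambda_h}, the projection acts as the identity on the relevant quantities, which keeps the discrete computation parallel to the continuous one. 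First I would build $\mu^d$ hierarchically: choose $\mu^0$ with $\llbracket\hat\epsilon\mu^0\rrbracket=\hat\epsilon_{\max}^2 p^0$ by activating only the component at $j_{\max}$ (exactly as in \eqref{boundmu}), and for $1\le d\le n-1$ set $\llbracket\hat\epsilon\mu^d\rrbracket=\hat\epsilon_{\max}^2 p^d-\nabla\cdot\epsilon\mathcal R_h^{d-1}\mu^{d-1}$, using \eqref{Q_h equals jump Lambda_h} to guarantee that such a $\mu^d\in\Lambda_h^d$ exists in the discrete space. The stability estimate $\|\mu^d\|_{L^2(\Gamma^d)}\lesssim\|\hat\epsilon_{\max}p^d\|_{L^2(\Omega^d)}+\|\mu^{d-1}\|_{L^2(\Gamma^{d-1})}$ follows from Lemma~\ref{lem: Rhtilde} and property \eqref{eps upperbounded by eps_max}, just as in \eqref{boundmu2}.

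Next I would fix $\bm v_0^d=0$ for $1\le d\le n-1$ and solve a discrete mixed auxiliary problem on $\Omega^n$ for $\bm v_0^n\in\bm V_{0,h}^n$, with right-hand side $p^n-\nabla\cdot\mathcal R_h^{n-1}\mu^{n-1}$ tested against $q^n\in Q_h^n$. Well-posedness of this discrete problem follows from the standard mixed finite element inf-sup condition guaranteed by \eqref{Fortin} together with the assumption $|\partial\Omega_i^n\cap\partial\Omega_D|>0$, and the discrete stability bound yields $\|K^{-1/2}\bm v_0^n\|_{L^2(\Omega^n)}^2+\|\nabla\cdot\epsilon\bm v_0^n\|_{L^2(\Omega^n)}^2\lesssim\|p^n\|_{L^2(\Omega^n)}^2+\|\mu^{n-1}\|_{L^2(\Gamma^{n-1})}^2$, in analogy with \eqref{ellipticphi}. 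With this construction, testing $b_h$ against $p$ and summing over dimensions gives $b_h(\bm v_0,\mu;\ p)=\enorm{(0,0,p)}_h^2$, reproducing \eqref{result1}; the key point is that each dimensional contribution telescopes so that $(p^d,\nabla\cdot\epsilon(\bm v_0^d+\mathcal R_h^{d-1}\mu^{d-1}))_{\Omega^d}+(p^d,\llbracket\hat\epsilon\mu^d\rrbracket)_{\Omega^d}=\|\hat\epsilon_{\max}p^d\|_{L^2(\Omega^d)}^2$.

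Finally I would establish the norm bound $\|[\bm v_0,\mu]\|_{\mathscr{X}_{\mathcal R,h}}^2\lesssim\enorm{(0,0,p)}_h^2$, combining the stability estimates for $\mu^d$ and $\bm v_0^n$ with Lemma~\ref{lem: Rhtilde} and \eqref{eps upperbounded by eps_max}, exactly as in the chain \eqref{result2}. Dividing $b_h(\bm v_0,\mu;\ p)$ by $\enorm{(\bm v_0,\mu,0)}_h$ then yields the claimed constant $C_{b_h}$, independent of $\gamma$, $\epsilon$, and $h$. The main obstacle I anticipate is keeping the discrete projections honest: I must verify that the Fortin-commuting property $\nabla\cdot\Pi_{V_h}^d=\Pi_{Q_h}^d\nabla\cdot$ and the matching condition \eqref{Q_h equals jump Lambda_h} together guarantee that $\mu^d$ lands in $\Lambda_h^d$ and that $\Pi_{\mathscr Q_h}$ does not spoil the identity $b_h(\bm v_0,\mu;\ p)=\enorm{(0,0,p)}_h^2$; subtle mismatches here are where the discrete argument genuinely differs from the continuous one, and the mortar condition \eqref{eq: CR} is what ultimately rescues control of $\mu$ when $\gamma$ is small.
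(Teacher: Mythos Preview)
Your overall strategy matches the paper's: build $\mu^d$ hierarchically using only the $j_{\max}$ component, set $\bm v_0^d=0$ for $d<n$, solve a discrete mixed problem on $\Omega^n$, and verify both $b_h(\bm v_0,\mu;p)=\|p\|_{\mathscr{Q}_h}^2$ and the norm bound via Lemma~\ref{lem: Rhtilde}. There is, however, a genuine gap in the construction of $\mu^d$ for $1\le d\le n-1$.

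You want the \emph{strong} equation $\llbracket\hat\epsilon\mu^d\rrbracket=\hat\epsilon_{\max}^2 p^d-\nabla\cdot\epsilon\mathcal R_h^{d-1}\mu^{d-1}$ and appeal to \eqref{Q_h equals jump Lambda_h}. But when $\epsilon$ and $\hat\epsilon_{\max}$ vary spatially (as the paper explicitly allows), neither side lies in $Q_h^d$: the right-hand side contains $\hat\epsilon_{\max}^2 p^d$ and $\nabla\cdot(\epsilon\,\cdot)$, which are not piecewise polynomial, and on the left $\llbracket\hat\epsilon\mu^d\rrbracket=-\hat\epsilon_{\max}\mu_{i,j_{\max}}^d$ is likewise not in $Q_h^d$. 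Condition \eqref{Q_h equals jump Lambda_h} states $\llbracket\Lambda_h\rrbracket=\mathscr{Q}_h$, not $\llbracket\hat\epsilon\Lambda_h\rrbracket=\mathscr{Q}_h$, so it does not deliver the existence you claim. For the same reason your assertion that $D\cdot[\epsilon(\bm v_0+\mathcal R_h\mu),\hat\epsilon\mu]\in\mathscr{Q}_h$ so that $\Pi_{\mathscr Q_h}$ ``acts as the identity'' is false in general; this is precisely why the discrete norm \eqref{VLnorms} carries the projection.

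The paper fixes this by replacing the pointwise equation with a \emph{weighted projection}: find $\mu_{i,j_{\max}}^d\in\Lambda_{i,j_{\max},h}^d$ satisfying
\[
(\hat\epsilon_{\max}\mu_{i,j_{\max}}^d,\varphi)_{\Omega_i^d}=(-\hat\epsilon_{\max}^2 p^d+\nabla\cdot\epsilon\mathcal R_h^{d-1}\mu^{d-1},\varphi)_{\Omega_i^d}\quad\text{for all }\varphi\in\Lambda_{i,j_{\max},h}^d.
\]
The associated matrix is a mass matrix weighted by $\hat\epsilon_{\max}>0$, hence SPD, so $\mu^d$ is well defined. Because $p^d\in Q_h^d$ and \eqref{Q_h equals jump Lambda_h} identifies $Q_h^d$ with $\Lambda_{i,j_{\max},h}^d$, one may test with $\varphi=p^d$ to recover exactly $(\llbracket\hat\epsilon\mu^d\rrbracket,p^d)=\|\hat\epsilon_{\max}p^d\|^2-(\nabla\cdot\epsilon\mathcal R_h^{d-1}\mu^{d-1},p^d)$, which is all that the telescoping identity needs; the bound on $\|\mu^d\|$ follows as before. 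This weak construction is where the discrete argument genuinely departs from the continuous one. As a minor point, the mortar condition \eqref{eq: CR} is not used in the paper's inf-sup proof; it is discussed only as a practical stabilization, so you should not rely on it here.
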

	\begin{proof} 
		A similar strategy to that used in Lemma~\ref{lem: inf-sup b cont} is employed. First, the function $\mu^0 \in \Lambda^0_h$ is constructed. For each index $i$, recall that $j_{\max}$ denotes the index for which $\hat{\epsilon}_{i,j_{\max}} = \hat{\epsilon}_{\max}$. We then set $\mu_{ik}^0 = 0$ for $k \ne j_{\max}$ and choose
		\begin{align*}
			\mu_{i,j_{\max}}^0 = -\hat{\epsilon}_{\max} p^0.
		\end{align*}
		The following two properties then follow readily:
		\begin{align*}
			(\llbracket \hat{\epsilon} \mu^0 \rrbracket, p^0)_{\Omega^0}
			&= (-\hat{\epsilon}_{\max} \mu_{i,j_{\max}}^0, p^0)_{\Omega^0}
			= (-\mu_{i,j_{\max}}^0, \hat{\epsilon}_{\max} p^0)_{\Omega^0}
			= \| \hat{\epsilon}_{\max} p^0 \|^2
			, \nonumber \\
			\| \mu^0 \|_{L^2(\Gamma^0)}^2 &= \| \hat{\epsilon}_{\max} p^0 \|_{L^2(\Omega^0)}^2.
		\end{align*}
		Using a similar strategy, we construct $\mu^d$ with $1 \le d \le n-1$ such that $\mu_{ik}^d = 0$ for $k \ne j_{\max}$. The remaining function $\mu_{i,j_{\max}}^d$ is defined such that
		\begin{align}
			(\hat{\epsilon}_{\max} \mu_{i,j_{\max}}^d, \varphi_k)_{\Omega_i^d} 
			&= (- \hat{\epsilon}_{\max}^2 p^d + \nabla \cdot \epsilon \mathcal{R}_h^{d-1} \mu^{d-1}, \varphi_k)_{\Omega_i^d}. \label{constr mumax}
		\end{align}
		for all basis functions $\varphi_k \in \Lambda_{ij, h}^d$. We show that $\mu_{i,j_{\max}}^d$ is well-defined by rewriting it as the linear combination $\mu_{i,j_{\max}}^d = \sum_k \alpha_k \varphi_k$. The matrix for solving $\alpha_k$ is then given by $A_{kl} = (\hat{\epsilon}_{\max} \varphi_l, \varphi_k)_{\Omega_i^d}$ which is symmetric and positive definite given $\hat{\epsilon}_{\max} > 0$ by \eqref{eps_max bound}.

		Moreover, the chosen $\mu^d$ has the following properties where we use \eqref{Q_h equals jump Lambda_h} and the bounds \eqref{eps upperbounded by eps_max} and \eqref{bound div Rhlambda}.
		\begin{subequations}
			\begin{align}
				(\llbracket \hat{\epsilon} \mu^d \rrbracket, p^d)_{\Omega^d} 
				&= \| \hat{\epsilon}_{\max} p^d \|_{L^2(\Omega^d)}^2 - (\nabla \cdot \epsilon \mathcal{R}_h^{d-1} \mu^{d-1}, p^d)_{\Omega^d}, \\
				\| \mu^d \|_{L^2(\Gamma^d)} &\lesssim \| \hat{\epsilon}_{\max} p^d \|_{L^2(\Omega^d)} 
				+ \| \mu^{d-1} \|_{L^2(\Gamma^{d-1})}. \label{mu_h bound}
			\end{align}
		\end{subequations}

		The functions $\bm{v}_0^d$ with $1 \le d \le n$ now remain to be constructed in order to obtain additional control on the pressure. As in Lemma~\ref{lem: inf-sup b cont}, we set 
		\begin{align}
			\bm{v}_0^d &= 0 & \text{for } 1 &\le d \le n - 1. 
		\end{align}
		For the final case $d = n$, we recall that $Q_h^n \times V_{0,h}^n$ is a stable mixed finite element pair as given by \eqref{Fortin}. Keeping this in mind, $\bm{v}_0^n$ is constructed such that it forms the following solution together with $p_v^n \in Q_h^n$
		\begin{align*}
			(K^{-1} \bm{v}_0^n, \bm{w}_0^n)_{\Omega^n} -
			(p_v^n, 
			\nabla \cdot \bm{w}_0^n)_{\Omega^n} &= 0 &\bm{w}_0^n &\in \bm{V}_{0,h}^n \\
			(\nabla \cdot \bm{v}_0^n, q^n)_{\Omega^n} &= (p^n - \nabla \cdot \mathcal{R}_h^{n-1} \mu^{n - 1}, q^n)_{\Omega^n}, & q^n &\in Q_h^n\\
			\bm{v}_0^n \cdot \bf{\nu} &= 0, & \text{on } &\partial \Omega^n \backslash \partial \Omega_D.
		\end{align*}

		We note that $\epsilon = \hat{\epsilon}_{\max} = 1$ in $\Omega^n$ and it follows by construction that
		\begin{align}
			-b_h(\bm{v}_0, \mu;\ p) &= ( p, \nabla \cdot \epsilon(\bm{v}_0 + \mathcal{R}_h \mu))_{\Omega}
			+ ( p, \llbracket \hat{\epsilon} \mu \rrbracket )_{\Omega} \nonumber \\
			&= \| \hat{\epsilon}_{\max} p \|_{L^2(\Omega)}^2 
			= \| p \|_{\mathscr{Q}_h}^2. \label{result1disc}
		\end{align}

		The corresponding bounds on $\bm{v}_0^n$ are derived using standard mixed finite element arguments and \eqref{bound div Rhlambda}:
		\begin{align}
			\| K^{-\frac{1}{2}} \bm{v}_0^n \|_{L^2(\Omega^n)} 
			+ \| \Pi_{Q_h}^n \nabla \cdot \epsilon \bm{v}_0^n \|_{L^2(\Omega^n)} &= 
			\| K^{-\frac{1}{2}} \bm{v}_0^n \|_{L^2(\Omega^n)} 
			+ \| \nabla \cdot \bm{v}_0^n \|_{L^2(\Omega^n)} \nonumber\\
			&\lesssim
			\| p^n \|_{L^2(\Omega^n)} 
			+ \| \nabla \cdot \mathcal{R}_h^{n-1} \mu^{n - 1} \|_{L^2(\Gamma^{n-1})} \nonumber\\
			&\lesssim
			\| p^n \|_{L^2(\Omega^n)} 
			+ \| \mu^{n - 1} \|_{L^2(\Gamma^{n-1})} \label{v_h bound}
		\end{align}

		The construction of $(\bm{v}_0,\mu)$ is now complete and the bounds \eqref{mu_h bound} and \eqref{v_h bound} in combination with \eqref{Rhtilde bounds} give us
			\begin{align}
				\enorm{(\bm{v}_0, \mu, 0)}_h^2 
				\lesssim & \ 
				\| K^{-\frac{1}{2}} \bm{v}_0 \|_{L^2(\Omega)}^2
				+ \| K^{-\frac{1}{2}} \mathcal{R}_h \mu \|_{L^2(\Omega)}^2 
				+ \| \gamma^{\frac{1}{2}} K_{\nu}^{-\frac{1}{2}}\mu \|_{L^2(\Gamma)}^2 \nonumber\\
				&+ \| \Pi_{\mathscr{Q}_h} (\nabla \cdot \epsilon (\bm{v}_0 + \mathcal{R}_h \mu)
				+ \llbracket \hat{\epsilon} \mu \rrbracket) \|_{L^2(\Omega)}^2 \nonumber\\
				\lesssim& \ 
				\| \hat{\epsilon}_{\max} p \|_{L^2(\Omega)}^2 
				= \| p \|_{\mathscr{Q}_h}^2. \label{result2disc}
			\end{align}

		The proof is concluded by combining \eqref{result1disc} and \eqref{result2disc}.
	\end{proof}

	With the previous lemma, we are ready to present the stability result, given by the following theorem.
	\begin{theorem}[Stability] \label{Well-posedness discrete}
		Let the mesh and function spaces $\mathscr{V}_{0,h}$, $\mathit{\Lambda}_h$, and $\mathscr{Q}_h$ be chosen such that they comply with the restrictions from Subsection~\ref{sub:discrete_spaces}. Then the discrete problem \eqref{weakform_h} has a unique solution satisfying the stability estimate
		\begin{align}
			\enorm{(\bm{u}_0, \lambda, p)}_h \lesssim \| \epsilon^{\frac{3}{2}} f \|_{L^2(\Omega)} + \| g \|_{H^{\frac{1}{2}}(\partial \Omega_D)}. \label{estimate disc}
		\end{align}
	\end{theorem}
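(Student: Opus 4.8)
The plan is to establish well-posedness by verifying the three Brezzi conditions for the discrete saddle-point problem \eqref{weakform_h}---continuity of the bilinear forms, ellipticity of $a_h$ on the kernel of $b_h$, and the inf-sup condition for $b_h$---and then to invoke standard saddle-point theory exactly as in the proof of Theorem~\ref{well-posedness cont}. The discrete inf-sup condition is already available from Lemma~\ref{lem: inf-sup b}, so the remaining work is to reproduce the continuity and ellipticity estimates of Lemma~\ref{ElKerCont} in the discrete norms \eqref{VLnorms}--\eqref{Pnorm} and to bound the right-hand side of \eqref{weakform_h}. Since $\mathscr{V}_{0,h}\times\mathit{\Lambda}_h\times\mathscr{Q}_h$ is finite-dimensional, no separate completeness argument analogous to Lemma~\ref{Lem: Completeness} is required.

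I would first treat continuity and ellipticity. Continuity of $a_h$ and $b_h$ with respect to $\|\cdot\|_{\mathscr{X}_{\mathcal{R},h}}$ and $\|\cdot\|_{\mathscr{Q}_h}$ follows directly from the Cauchy--Schwarz inequality, exactly as in the continuous case. For ellipticity on the kernel, suppose $b_h(\bm{u}_0,\lambda;\,q)=0$ for all $q\in\mathscr{Q}_h$. By \eqref{operatorb_h} this means $(q, D\cdot[\epsilon(\bm{u}_0+\mathcal{R}_h\lambda),\hat{\epsilon}\lambda])_{\Omega}=0$ for every $q\in\mathscr{Q}_h$, i.e. the projection $\Pi_{\mathscr{Q}_h} D\cdot[\epsilon(\bm{u}_0+\mathcal{R}_h\lambda),\hat{\epsilon}\lambda]$ vanishes. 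This is precisely the third term in the discrete norm \eqref{VLnorms}, so that term drops and
\begin{align*}
a_h(\bm{u}_0,\lambda;\,\bm{u}_0,\lambda)
= \|K^{-\frac12}(\bm{u}_0+\mathcal{R}_h\lambda)\|_{L^2(\Omega)}^2
+ \|\gamma^{\frac12}K_\nu^{-\frac12}\lambda\|_{L^2(\Gamma)}^2
= \enorm{(\bm{u}_0,\lambda,0)}_h^2,
\end{align*}
giving ellipticity with constant $C_a=1$, just as in Lemma~\ref{ElKerCont}. The point distinguishing the discrete case is that, because of the spatially varying $\epsilon$, the quantity $D\cdot[\epsilon(\bm{u}_0+\mathcal{R}_h\lambda),\hat{\epsilon}\lambda]$ need not lie in $\mathscr{Q}_h$; the projection $\Pi_{\mathscr{Q}_h}$ built into \eqref{VLnorms} is exactly what $b_h$ sees, so the kernel condition translates cleanly into the vanishing of the corresponding norm contribution.

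Next I would bound the right-hand side of \eqref{weakform_h} along the lines of Theorem~\ref{well-posedness cont}. For the source term, $(\epsilon^2 f,q)_\Omega \le \|\epsilon^{\frac32}f\|_{L^2(\Omega)}\|q\|_{\mathscr{Q}_h}$ follows from \eqref{eps upperbounded by eps_max} and the definition of $\|\cdot\|_{\mathscr{Q}_h}$. For the boundary term, I would use that $\mathcal{R}_h\mu$ has vanishing normal trace on $\partial\Omega_D$, so that $\langle g,\epsilon\bm{v}_0\cdot\bm{\nu}\rangle_{\partial\Omega_D}=\langle g,\epsilon(\bm{v}_0+\mathcal{R}_h\mu)\cdot\bm{\nu}\rangle_{\partial\Omega_D}$, and then bound by $\|g\|_{H^{\frac12}(\partial\Omega_D)}$ times the $H(\operatorname{div})$-norm of $\epsilon(\bm{v}_0+\mathcal{R}_h\mu)$, which is controlled by $\|[\bm{v}_0,\mu]\|_{\mathscr{X}_{\mathcal{R},h}}$ using the lower bound in \eqref{gamma_0} and $\hat{\epsilon}_{\max}>0$. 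With continuity, kernel-ellipticity, the inf-sup condition of Lemma~\ref{lem: inf-sup b}, and the bounded right-hand side in hand, existence, uniqueness, and the estimate \eqref{estimate disc} follow from the Brezzi theory of saddle-point problems.

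I expect the main obstacle to be ensuring that all constants---in continuity, in ellipticity, and especially the inf-sup constant---are genuinely independent of $\gamma$, $\epsilon$, and $h$, so that \eqref{estimate disc} is robust. This uniformity rests on the discrete extension bounds of Lemma~\ref{lem: Rhtilde}, the mortar condition \eqref{eq: CR}, and the compatibility conditions \eqref{Fortin} and \eqref{Q_h equals jump Lambda_h}, which must be threaded carefully through the right-hand-side bound and through the construction underlying Lemma~\ref{lem: inf-sup b}, since the whole purpose of the weighted norms is to absorb the aperture dependence into the norm rather than into the stability constant.
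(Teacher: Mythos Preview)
Your plan has a genuine gap in the continuity step. Compare the discrete norm \eqref{VLnorms} with the continuous norm \eqref{X-norm cont}: the divergence term in the discrete norm carries no $\hat{\epsilon}_{\max}^{-1}$ weight. Consequently, when you write
\[
|b_h(\bm{v}_0,\mu;\,p)| = |(\hat{\epsilon}_{\max} p,\ \hat{\epsilon}_{\max}^{-1}\Pi_{\mathscr{Q}_h} D\cdot[\epsilon(\bm{v}_0+\mathcal{R}_h\mu),\hat{\epsilon}\mu])_\Omega|
\]
and apply Cauchy--Schwarz, the resulting factor $\|\hat{\epsilon}_{\max}^{-1}\Pi_{\mathscr{Q}_h} D\cdot[\ldots]\|_{L^2}$ is \emph{not} bounded by $\|[\bm{v}_0,\mu]\|_{\mathscr{X}_{\mathcal{R},h}}$ uniformly in $\epsilon$. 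So $b_h$ is not uniformly continuous in these norms, and the abstract Brezzi estimate you would obtain carries an $\epsilon$-dependent constant, which defeats the robustness claim in \eqref{estimate disc}. This is exactly the difficulty you flag in your last paragraph, but it is not a technicality to be ``threaded carefully''---it actually blocks the abstract route.

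The paper circumvents this by abandoning the Brezzi black box in the discrete case and arguing directly: it inserts the concrete test functions $\bm{v}_0=\bm{u}_0+\delta_1\bm{u}_{0,p}$, $\mu=\lambda+\delta_1\lambda_p$, $q=p+\Pi_{\mathscr{Q}_h}D\cdot[\ldots]$ (with $(\bm{u}_{0,p},\lambda_p)$ from Lemma~\ref{lem: inf-sup b}) into \eqref{weakform_h} and balances the resulting terms by hand. A second point you underestimate is the boundary term: controlling $\|\epsilon(\bm{v}_0+\mathcal{R}_h\mu)\|_{H(\operatorname{div})}$ by $\|[\bm{v}_0,\mu]\|_{\mathscr{X}_{\mathcal{R},h}}$ is not immediate, because $\nabla\cdot\epsilon(\bm{v}_0+\mathcal{R}_h\mu)\notin\mathscr{Q}_h$ in general. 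The paper spends a full page on this (splitting $\epsilon$ into a piecewise constant approximation $\epsilon_h$ plus remainder, using $\nabla\cdot\mathscr{V}_h\subseteq\mathscr{Q}_h$, an $L^\infty$ approximation estimate for $\epsilon-\epsilon_h$, and an inverse inequality), and this machinery is essential, not a routine trace bound.
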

	\begin{proof}
		Starting with Lemma~\ref{lem: inf-sup b}, let $(\bm{u}_{0, p}, \lambda_p)$ be the constructed pair based on the pressure distribution $p$ with the following two properties
		\begin{subequations}\label{eq: u_p}
			\begin{align}
				- ( p, D \cdot [\epsilon (\bm{u}_{0, p} + \mathcal{R}_h \lambda_p), \hat{\epsilon} \lambda_p])_{\Omega} &= \| p \|_{\mathscr{Q}_h}^2, \\
				\| [\bm{u}_{0, p}, \lambda_p] \|_{\mathscr{X}_{\mathcal{R}, h}} &\lesssim \| p \|_{\mathscr{Q}_h}. 
			\end{align}
		\end{subequations}
		We then introduce the following test functions with $\delta_1 > 0$ a constant to be determined later:
		\begin{align*}
			\bm{v}_0 &= \bm{u}_0 + \delta_1 \bm{u}_{0, p}, &
			\mu &= \lambda + \delta_1 \lambda_p, &
			q &= p + \Pi_{\mathscr{Q}_h} D \cdot [\epsilon (\bm{u}_0 + \mathcal{R}_h \lambda), \hat{\epsilon} \lambda ].
		\end{align*}

		Substitution of these test functions in \eqref{weakform_h} gives us
		\begin{align*}
			\| K^{-\frac{1}{2}} &(\bm{u}_0 + \mathcal{R}_h \lambda) \|_{L^2(\Omega)}^2 
			+ \| \gamma^{\frac{1}{2}} K_{\nu}^{-\frac{1}{2}}\lambda \|_{L^2(\Gamma)}^2
			+ \| \Pi_{\mathscr{Q}_h} D \cdot [\epsilon (\bm{u}_0 + \mathcal{R}_h \lambda), \hat{\epsilon} \lambda ] \|_{L^2(\Omega)}^2 \nonumber\\
			&+ \delta_1 \| p \|_{\mathscr{Q}_h}^2 \nonumber\\
			=&\ 
			-\langle g, \epsilon (\bm{u}_0 + \delta_1 \bm{u}_{0, p}) \cdot \bm{\nu} \rangle_{\partial \Omega_D} 
			+ (\epsilon^2 f, p + \Pi_{\mathscr{Q}_h} D \cdot [\epsilon (\bm{u}_0 + \mathcal{R}_h \lambda), \hat{\epsilon} \lambda ])_{\Omega} \nonumber \\
			&- (K^{-1} (\bm{u}_0 + \mathcal{R}_h \lambda), \delta_1 (\bm{u}_{0, p} + \mathcal{R}_h \lambda_p))_{\Omega}
			- \langle \frac{\gamma}{K_{\nu}} \lambda, \delta_1 \lambda_p \rangle_{\Gamma} \nonumber\\
			\le&\ 
			\frac{1}{2 \delta_2} \| g \|_{H^{\frac{1}{2}}(\partial \Omega_D)}^2 
			+ \frac{\delta_2}{2} \| \epsilon (\bm{u}_0 + \delta_1 \bm{u}_{0, p}) \cdot \bm{\nu} \|_{H^{-\frac{1}{2}}(\partial \Omega_D)}^2 
			+ \big(\frac{1}{2 (\delta_1 \hat{\epsilon}_{\max})^2} + \frac{1}{2}\big)
			\|\epsilon^2 f \|_{L^2(\Omega)}^2 \nonumber\\
			&+ \frac{\delta_1^2}{2} \| \hat{\epsilon}_{\max} p \|_{L^2(\Omega)}^2 
			+ \frac{1}{2} \| \Pi_{\mathscr{Q}_h} D \cdot [\epsilon (\bm{u}_0 + \mathcal{R}_h \lambda), \hat{\epsilon} \lambda ] \|_{L^2(\Omega)}^2 \nonumber \\
			&+ \frac{1}{2} \| K^{-\frac{1}{2}} (\bm{u}_0 + \mathcal{R}_h \lambda) \|_{L^2(\Omega)}^2 
			+ \frac{\delta_1^2}{2} \| K^{-\frac{1}{2}} (\bm{u}_{0, p} + \mathcal{R}_h \lambda_p) \|_{L^2(\Omega)}^2) \nonumber\\
			&+ \frac{1}{2} \| \gamma^{\frac{1}{2}} K_{\nu}^{-\frac{1}{2}}\lambda \|_{L^2(\Gamma)}^2
			+ \frac{\delta_1^2}{2} \| \gamma^{\frac{1}{2}} K_{\nu}^{-\frac{1}{2}}\lambda_p \|_{L^2(\Gamma)}^2,
		\end{align*}
		with $\delta_2 > 0$ a constant. 

		Let us consider the second term after the inequality. The fact that the extension operator $\mathcal{R}_h$ has zero normal trace on $\partial \Omega^d$, the positive definiteness of $K$, and the trace theorem give us
		\begin{align}
			\| \epsilon (\bm{u}_0 + \delta_1 \bm{u}_{0, p}) \cdot \bm{\nu} &\|_{H^{-\frac{1}{2}}(\partial \Omega_D)}
			\le
			\| \epsilon (\bm{u}_0 + \mathcal{R}_h \lambda) \cdot \bm{\nu} \|_{H^{-\frac{1}{2}}(\partial \Omega_D)}
			+ \delta_1 \| \epsilon (\bm{u}_{0, p} + \mathcal{R}_h \lambda_p) \cdot \bm{\nu} \|_{H^{-\frac{1}{2}}(\partial \Omega_D)} \nonumber\\
			\lesssim&\ 
			\| K^{-\frac{1}{2}} (\bm{u}_0 + \mathcal{R}_h \lambda) \|_{L^2(\Omega)}
			+\| \nabla \cdot \epsilon (\bm{u}_0 + \mathcal{R}_h \lambda) \|_{L^2(\Omega)}  \nonumber\\
			&+  \delta_1 \| K^{-\frac{1}{2}} (\bm{u}_{0, p} + \mathcal{R}_h \lambda_p) \|_{L^2(\Omega)}
			+ \delta_1 \| \nabla \cdot \epsilon (\bm{u}_{0, p} + \mathcal{R}_h \lambda_p) \|_{L^2(\Omega)}.
		\end{align}
		Considering the second term, let $\epsilon_h$ be the piecewise constant approximation of $\epsilon$. We then use $\nabla \cdot \mathscr{V}_h \subseteq \mathscr{Q}_h$ from \eqref{Fortin} and the $L^{\infty}$ approximation property of $\epsilon_h$ from \cite{Douglas1975} to obtain
		\begin{align}
			\| \nabla \cdot \epsilon (\bm{u}_0 + \mathcal{R}_h &\lambda) \|_{L^2(\Omega)}
			\le\ \| (\nabla \epsilon) \cdot (\bm{u}_0 + \mathcal{R}_h \lambda) \|_{L^2(\Omega)}
			+ \| \epsilon \nabla \cdot (\bm{u}_0 + \mathcal{R}_h \lambda) \|_{L^2(\Omega)} \nonumber\\
			\lesssim&\ \| \epsilon^{\frac{1}{2}} \|_{L^{\infty}(\Omega)} \| \bm{u}_0 + \mathcal{R}_h \lambda \|_{L^2(\Omega)}
			+ \| (\epsilon - \epsilon_h) \nabla \cdot (\bm{u}_0 + \mathcal{R}_h \lambda) \|_{L^2(\Omega)} \nonumber\\
			&+ \| \Pi_{\mathscr{Q}_h} \epsilon_h \nabla \cdot (\bm{u}_0 + \mathcal{R}_h \lambda) \|_{L^2(\Omega)} \nonumber\\
			\lesssim&\ \| \bm{u}_0 + \mathcal{R}_h \lambda \|_{L^2(\Omega)}
			+ \| (\epsilon - \epsilon_h) \nabla \cdot (\bm{u}_0 + \mathcal{R}_h \lambda) \|_{L^2(\Omega)} \nonumber\\
			&+ \| \Pi_{\mathscr{Q}_h} (\epsilon_h - \epsilon) \nabla \cdot (\bm{u}_0 + \mathcal{R}_h \lambda) \|_{L^2(\Omega)}
			+ \| \Pi_{\mathscr{Q}_h} \epsilon \nabla \cdot (\bm{u}_0 + \mathcal{R}_h \lambda) \|_{L^2(\Omega)} \nonumber\\
			\lesssim&\ \| \bm{u}_0 + \mathcal{R}_h \lambda \|_{L^2(\Omega)}
			+ h\| \nabla \epsilon \|_{L^{\infty}(\Omega)} \| \nabla \cdot (\bm{u}_0 + \mathcal{R}_h \lambda) \|_{L^2(\Omega)} \nonumber\\
			&+ \| \Pi_{\mathscr{Q}_h} \epsilon \nabla \cdot (\bm{u}_0 + \mathcal{R}_h \lambda) \|_{L^2(\Omega)} \nonumber\\
			\lesssim&\ \| \bm{u}_0 + \mathcal{R}_h \lambda \|_{L^2(\Omega)}
			+ \| \Pi_{\mathscr{Q}_h} \epsilon \nabla \cdot (\bm{u}_0 + \mathcal{R}_h \lambda) \|_{L^2(\Omega)}, \label{eq: step 1}
		\end{align}
		using an inverse inequality.
		Finally, we use assumption \eqref{gamma_0} and the positive definiteness of $K$ to derive
		\begin{align}
			\| \Pi_{\mathscr{Q}_h} \epsilon \nabla \cdot (\bm{u}_0 + \mathcal{R}_h \lambda) \|_{L^2(\Omega)}
			\lesssim&\
			\| \Pi_{\mathscr{Q}_h} (\nabla \epsilon) \cdot (\bm{u}_0 + \mathcal{R}_h \lambda) \|_{L^2(\Omega)}
			+ \| \Pi_{\mathscr{Q}_h} \nabla \cdot \epsilon (\bm{u}_0 + \mathcal{R}_h \lambda) \|_{L^2(\Omega)} \nonumber\\
			\lesssim&\ \| \bm{u}_0 + \mathcal{R}_h \lambda \|_{L^2(\Omega)}
			+ \| \Pi_{\mathscr{Q}_h} \nabla \cdot \epsilon (\bm{u}_0 + \mathcal{R}_h \lambda) \|_{L^2(\Omega)} \nonumber\\
			\lesssim&\ \| \bm{u}_0 + \mathcal{R}_h \lambda \|_{L^2(\Omega)}
			+ \| \Pi_{\mathscr{Q}_h} D \cdot [\epsilon (\bm{v}_0 + \mathcal{R}_h \mu), \hat{\epsilon} \mu ] \|_{L^2(\Omega)} \nonumber\\
			&\ + \| \Pi_{\mathscr{Q}_h} \llbracket \hat{\epsilon} \mu \rrbracket \|_{L^2(\Omega)} \nonumber\\
			\lesssim&\ \| [\bm{u}_0, \lambda ] \|_{\mathscr{X}_{\mathcal{R}, h}}. \label{eq: step 2}
		\end{align}

		The steps from \eqref{eq: step 1} and \eqref{eq: step 2} are then repeated for $\bm{u}_{0, p} + \mathcal{R}_h \lambda_p$ and we conclude
		\begin{align}
			\| \epsilon (\bm{u}_0 + \delta_1 \bm{u}_{0, p}) \cdot \bm{\nu} &\|_{H^{-\frac{1}{2}}(\partial \Omega_D)}^2
			\lesssim \| [\bm{u}_0, \lambda ] \|_{\mathscr{X}_{\mathcal{R}, h}}^2 + \delta_1^2 \| [\bm{u}_{0, p}, \lambda_p ] \|_{\mathscr{X}_{\mathcal{R}, h}}^2.
		\end{align}
		By setting $\delta_2$ sufficiently small and using the properties of $(\bm{u}_{0, p}, \lambda_p)$ from \eqref{eq: u_p}, we obtain
		\begin{align}
			\| [\bm{u}_0, \lambda ] \|_{\mathscr{X}_{\mathcal{R}, h}}^2 
			+ \delta_1 \| p \|_{\mathscr{Q}_h}^2 
			&\lesssim 
			\| \epsilon^{\frac{3}{2}} f \|_{L^2(\Omega)}^2
			+ \| g \|_{H^{\frac{1}{2}}(\partial \Omega_D)}^2
			+ \delta_1^2 \| p \|_{\mathscr{Q}_h}^2 
		\end{align}
		Choosing a sufficiently small value for $\delta_1$ then concludes the stability estimate. Since we are considering a square linear system, this estimate implies existence and uniqueness of the solution.
	\end{proof}

	With the stability result from Theorem~\ref{Well-posedness discrete}, we continue with the basic error estimates. The true solution, i.e. the unique solution to \eqref{eq: weakform}, will be denoted by $(\bm{u}_0, \lambda, p)$
	and the finite element solution will be called $(\bm{u}_{0,h}, \lambda_h, p_h)$. Since we are interested in the combined fluxes, we re-introduce
	\begin{align*}
		\bm{u} &= \bm{u}_{0} + \mathcal{R} \lambda, & 
		\bm{u}_h &= \bm{u}_{0, h} + \mathcal{R}_h \lambda_h.
	\end{align*}

	These definitions show the flexibility in the choice of extension operator. In fact, for a given $\bm{u}$ with normal trace $\lambda$ on $\Gamma$, the above decomposition is possible for an arbitrary choice of $\mathcal{R}$ satisfying \eqref{eq: ExtensionR}. In turn, the goal of the analysis which follows is not to prove that $\bm{u}_{0,h}$ converges to $\bm{u}_0$ since this depends completely on the choice of extension operator. Rather, we aim to show that the combined flux $\bm{u}_h$ converges to $\bm{u}$. To emphasize this nuance, we introduce the norm:
	\begin{align}
		\| [\bm{v}, \mu] \|_{\mathscr{X}_h}^2 =& \ 
			\| K^{-\frac{1}{2}} \bm{v} \|_{L^2(\Omega)}^2 
			+ \| \gamma^{\frac{1}{2}} K_{\nu}^{-\frac{1}{2}} \mu \|_{L^2(\Gamma)}^2
			+ \| \Pi_{\mathscr{Q}_h} D \cdot [\epsilon \bm{v}, \hat{\epsilon} \mu ] \|_{L^2(\Omega)}^2. \label{norm on u}
	\end{align}

	Let $\Pi_{\mathit{\Lambda}_h}: \mathit{\Lambda} \to \mathit{\Lambda}_h$ and $\Pi_{\mathscr{Q}_h}: \mathscr{Q} \to \mathscr{Q}_h$ be $L^2$-projection operators to the corresponding discrete spaces. Additionally, let $\Pi_{V_h}^d: V^d \cap (L^{2 + s})^d \to V_h^d$ for $1 \le d \le n$ and $s>0$ denote the standard Fortin interpolator associated with the chosen flux space $V^d$. The direct sum of $\Pi_{V_h}^d$ over all dimensions $1 \le d \le n$ gives us $\Pi_{\mathscr{V}_h}$.

	Let $k$ represent the order of the polynomials in the pressure space. The following interpolation estimates hold for the operators $\Pi_{\mathscr{V}_h}$, $\Pi_{\mathit{\Lambda}_h}$, and $\Pi_{\mathscr{Q}_h}$ and a chosen value of $k$ (see e.g. \cite{Arbogast,Boffi}):
	\begin{subequations} \label{eqs proj est}
		\begin{align}
			\| \bm{u} - \Pi_{\mathscr{V}_h} \bm{u} \|_{0,\Omega} &\lesssim \| \bm{u} \|_{r,\Omega} ~ h^r, 
			& 1 &\leq r \leq k+1, \label{proj est u}\\
			\| \nabla \cdot (\bm{u} - \Pi_{\mathscr{V}_h} \bm{u}) \|_{0,\Omega} &\lesssim \| \nabla \cdot \bm{u} \|_{r,\Omega} ~ h^r,
			& 1 &\leq r \leq k+1, \label{proj est divu}\\
			\| \lambda - \Pi_{\mathit{\Lambda}_h} \lambda \|_{0, \Gamma} &\lesssim \| \lambda \|_{r, \Gamma} ~ h^r,
			& 1 &\leq r \leq k+1, \label{proj est lambda}\\
			\| p - \Pi_{\mathscr{Q}_h} p \|_{0,\Omega} &\lesssim\| p \|_{r,\Omega} ~ h^r,
			& 1 &\leq r \leq k+1. \label{proj est p}
		\end{align}
	\end{subequations}
	Here, $\|\cdot\|_{r, \Sigma}$ is short-hand for the $H^r(\Sigma)$-norm. 

	We are now ready to continue with the error estimates. For this, we employ the same strategy as in \cite{arbogast2016linear}.
	First, the test functions are chosen from the discrete function spaces and we subtract the systems \eqref{eq: weakform} and \eqref{weakform_h} to obtain
	\begin{align}
			(K^{-1} (\bm{u} - \bm{u}_h), \bm{v}_h)_{\Omega}
			&+ \langle \frac{\gamma}{K_{\nu}} (\lambda - \lambda_h), \mu_h \rangle_{\Gamma}
			- ( p - p_h, D \cdot [\epsilon \bm{v}_h, \hat{\epsilon} \mu_h])_{\Omega} \nonumber \\
			&+ ( q_h, D \cdot [\epsilon (\bm{u} - \bm{u}_h), \hat{\epsilon} (\lambda - \lambda_h)])_{\Omega}
			= 0. \label{sys subtracted}
	\end{align}

	An immediate consequence of choosing $q_h = \Pi_{\mathscr{Q}_h} D \cdot [\epsilon (\bm{u} - \bm{u}_h), \hat{\epsilon} (\lambda - \lambda_h)]$ is that
	\begin{align}
		\Pi_{\mathscr{Q}_h} D \cdot [\epsilon (\bm{u} - \bm{u}_h), \hat{\epsilon} (\lambda - \lambda_h)] = 0. \label{eq: perfect div}
	\end{align}
	
	Turning back to \eqref{sys subtracted}, we introduce the projections of the true solution onto the corresponding spaces and manipulate the equation to
	\begin{align*}
		(K^{-1} (\Pi_{\mathscr{V}_h} \bm{u} &- \bm{u}_h), \bm{v}_h)_{\Omega}
		+ \langle \frac{\gamma}{K_{\nu}} (\Pi_{\mathit{\Lambda}_h} \lambda - \lambda_h), \mu_h \rangle_{\Gamma}
		- ( \Pi_{\mathscr{Q}_h} p - p_h, D \cdot [\epsilon \bm{v}_h, \hat{\epsilon} \mu_h])_{\Omega} \nonumber \\
		&+ ( q_h, \Pi_{\mathscr{Q}_h} D \cdot [\epsilon (\Pi_{\mathscr{V}_h} \bm{u} - \bm{u}_h), \hat{\epsilon} (\Pi_{\mathit{\Lambda}_h} \lambda - \lambda_h)])_{\Omega} \nonumber \\
		=&\ (K^{-1} (\Pi_{\mathscr{V}_h} \bm{u} - \bm{u}), \bm{v}_h)_{\Omega}
		+ \langle \frac{\gamma}{K_{\nu}} (\Pi_{\mathit{\Lambda}_h} \lambda - \lambda), \mu_h \rangle_{\Gamma} 
		- ( \Pi_{\mathscr{Q}_h} p - p, D \cdot [\epsilon \bm{v}_h, \hat{\epsilon} \mu_h])_{\Omega} \nonumber \\
		&+ ( q_h, \Pi_{\mathscr{Q}_h} D \cdot [\epsilon (\Pi_{\mathscr{V}_h} \bm{u} - \bm{u}), \hat{\epsilon} (\Pi_{\mathit{\Lambda}_h} \lambda - \lambda)])_{\Omega}.
	\end{align*}
	
	We continue by making the following explicit choice of test functions. For that, we first introduce the pair $(\bm{u}_{p, h}, \lambda_{p, h})$ from the inf-sup condition in Lemma~\ref{lem: inf-sup b} based on the pressure distribution $\Pi_{\mathscr{Q}_h} p - p_h$. Let us recall the following two properties
	\begin{subequations} 
		\begin{align}
			- ( \Pi_{\mathscr{Q}_h} p - p_h, D \cdot [\epsilon \bm{u}_{p, h}, \hat{\epsilon} \lambda_{p, h}])_{\Omega} &= \| \Pi_{\mathscr{Q}_h} p - p_h \|_{\mathscr{Q}_h}^2, \label{eq: u_p,h eq}\\
			\| [\bm{u}_{p, h}, \lambda_{p, h}] \|_{\mathscr{X}_h} &\lesssim \| \Pi_{\mathscr{Q}_h} p - p_h \|_{\mathscr{Q}_h}.\label{eq: u_p,h ineq}
		\end{align}
	\end{subequations}

	Under the assumption that the solution has sufficient regularity, we are ready to set the test functions as
	\begin{align*}
		\bm{v}_h &= \Pi_{\mathscr{V}_h} \bm{u} - \bm{u}_h + \delta_1 \bm{u}_{p, h}, \\
		\mu_h &= \Pi_{\mathit{\Lambda}_h} \lambda - \lambda_h + \delta_1 \lambda_{p, h}, \\
		q_h &= \Pi_{\mathscr{Q}_h} p - p_h,
	\end{align*}
	with $\delta_1 > 0$ to be determined later.	
	Substitution in the above system and applying Cauchy-Schwarz and Young inequalities multiple times (with parameters ${\delta_1, \delta_2, \delta_3 > 0}$) and \eqref{eq: u_p,h eq} then gives us
	\begin{align}
		\left(1 - \frac{\delta_2}{2} - \frac{\delta_3}{2} \right) & \left( \| K^{-\frac{1}{2}} (\Pi_{\mathscr{V}_h} \bm{u} - \bm{u}_h) \|_{L^2(\Omega)}^2
		+ \| \gamma^{\frac{1}{2}} K_{\nu}^{-\frac{1}{2}} (\Pi_{\mathit{\Lambda}_h} \lambda - \lambda_h) \|_{L^2(\Gamma)}^2 \right) \nonumber\\
		&+ \delta_1 \| \Pi_{\mathscr{Q}_h} p - p_h \|_{\mathscr{Q}_h}^2 \nonumber\\
		\le& \ \left(\frac{1}{2\delta_3} + \frac{1}{2} \right) \Big(\| K^{-\frac{1}{2}} (\Pi_{\mathscr{V}_h} \bm{u} - \bm{u}) \|_{L^2(\Omega)}^2
		+ \| \gamma^{\frac{1}{2}} K_{\nu}^{-\frac{1}{2}} (\Pi_{\mathit{\Lambda}_h} \lambda - \lambda) \|_{L^2(\Gamma)}^2 \Big) \nonumber\\
		&+ \frac{1}{2\delta_1} \| \hat{\epsilon}_{\max}^{-1} \Pi_{\mathscr{Q}_h} D \cdot [\epsilon (\Pi_{\mathscr{V}_h} \bm{u} - \bm{u}), \hat{\epsilon} (\Pi_{\mathit{\Lambda}_h} \lambda - \lambda)] \|_{L^2(\Omega)}^2\nonumber\\
		&+ \frac{\delta_1}{2}\| \hat{\epsilon}_{\max} (\Pi_{\mathscr{Q}_h} p - p_h ) \|_{L^2(\Omega)}^2 \nonumber\\
		&+ \left(\frac{1}{2\delta_2} + \frac{1}{2} \right) \delta_1^2 \left(\| K^{-\frac{1}{2}} \bm{u}_{p, h} \|_{L^2(\Omega)}^2
		 + \| \gamma^{\frac{1}{2}} K_{\nu}^{-\frac{1}{2}} \lambda_{p, h} \|_{L^2(\Gamma)}^2  \right)\nonumber\\
		&+ ( p - \Pi_{\mathscr{Q}_h} p, D \cdot [\epsilon (\Pi_{\mathscr{V}_h} \bm{u} - \bm{u}_h + \delta_1 \bm{u}_{p, h}), \hat{\epsilon} (\Pi_{\mathit{\Lambda}_h} \lambda - \lambda_h + \delta_1 \lambda_{p, h})])_{\Omega}. \label{firstestimate}
	\end{align}

	We continue to form a bound on the last term in \eqref{firstestimate}. For brevity, we briefly revert to the notation of $\bm{v}_h$ and $\mu_h$. The definition of the operator $D \cdot$ and the product rule give us
	\begin{align}
		( p - \Pi_{\mathscr{Q}_h} p &, D \cdot [\epsilon \bm{v}_h, \hat{\epsilon} \mu_h])_{\Omega}
		= (p - \Pi_{\mathscr{Q}_h} p, \nabla \cdot \epsilon \bm{v}_h + \llbracket \hat{\epsilon} \mu_h \rrbracket)_{\Omega} \nonumber\\
		&= (p - \Pi_{\mathscr{Q}_h} p, \nabla \epsilon \cdot \bm{v}_h)_{\Omega}
		+ (p - \Pi_{\mathscr{Q}_h} p, \epsilon \nabla \cdot \bm{v}_h)_{\Omega}
		+ (p - \Pi_{\mathscr{Q}_h} p, \llbracket \hat{\epsilon} \mu_h \rrbracket)_{\Omega}. \label{eq: er est 1}
	\end{align}

	Let us consider the three terms on the right-hand side one at a time. For the first term, we use Cauchy-Schwarz, \eqref{grad eps bound}, and the positive-definiteness of $K$ to derive
	\begin{align}
		(p - \Pi_{\mathscr{Q}_h} p, \nabla \epsilon \cdot \bm{v}_h)_{\Omega} 
		&\le \| p - \Pi_{\mathscr{Q}_h} p \|_{L^2(\Omega)} \| \nabla \epsilon \|_{L^\infty(\Omega)} \| \bm{v}_h \|_{L^2(\Omega)} \nonumber\\
		&\lesssim \| p - \Pi_{\mathscr{Q}_h} p \|_{L^2(\Omega)} \| \bm{v}_h \|_{L^2(\Omega)} \nonumber\\
		&\lesssim \| p - \Pi_{\mathscr{Q}_h} p \|_{L^2(\Omega)} \| K^{-\frac{1}{2}} \bm{v}_h \|_{L^2(\Omega)}.
	\end{align}

	Let us continue with the second term. Let $\epsilon_h$ be the piecewise constant approximation of $\epsilon$. Since $\nabla \cdot \bm{v}_h \in \mathscr{Q}_h$, we have $\epsilon_h \nabla \cdot \bm{v}_h \in \mathscr{Q}_h$. 
	We use this in combination with the $L^{\infty}$ approximation property of $\epsilon_h$ from \cite{Douglas1975} and an inverse inequality to derive
	\begin{align}
		(p - \Pi_{\mathscr{Q}_h} p, \epsilon \nabla \cdot \bm{v}_h)_{\Omega}
		&= ((I - \Pi_{\mathscr{Q}_h}) p, (\epsilon - \epsilon_h) \nabla \cdot \bm{v}_h)_{\Omega} \nonumber \\
		&\lesssim \| p - \Pi_{\mathscr{Q}_h} p \|_{L^2(\Omega)} h \| \nabla \epsilon \|_{L^{\infty}(\Omega)} \|\nabla \cdot \bm{v}_h \|_{L^2(\Omega)} \nonumber \\
		&\lesssim \| p - \Pi_{\mathscr{Q}_h} p \|_{L^2(\Omega)} \| \bm{v}_h \|_{L^2(\Omega)} \nonumber \\
		&\lesssim \| p - \Pi_{\mathscr{Q}_h} p \|_{L^2(\Omega)} \| K^{-\frac{1}{2}} \bm{v}_h \|_{L^2(\Omega)}.   \label{eq: er est 2}
	\end{align}

	Next, we consider the final term in \eqref{eq: er est 1}. With the exception of $d=1$ and $n=3$, this term is zero since $\llbracket \mathit{\Lambda}_h \rrbracket = \mathscr{Q}_h$ by \eqref{Q_h equals jump Lambda_h} and $\hat{\epsilon}$ is constant. 
	Thus, let us consider a $\Omega^d$ with $d = 1$ and $n=3$. In this case, we derive
	\begin{align}
		(p - \Pi_{\mathscr{Q}_h} p, \llbracket \hat{\epsilon} \mu_h \rrbracket)_{\Omega^d}
		&\lesssim  \| p - \Pi_{\mathscr{Q}_h} p \|_{L^2(\Omega^d)} \| \hat{\epsilon} \mu_h \|_{L^2(\Omega^d)} \nonumber \\
		&\lesssim  \| p - \Pi_{\mathscr{Q}_h} p \|_{L^2(\Omega^d)} \| \gamma^{\frac{1}{2}} K_{\nu}^{-\frac{1}{2}} \mu_h \|_{L^2(\Omega^d)}. \label{eq: er est 3}
	\end{align}	
	The final equality follows by noting that $\hat{\epsilon}^1 \eqsim \epsilon^2 \eqsim \gamma^{\frac{3 - 2}{2}}$ and 
	that there is no extension of the one-dimensional domain beyond the point where $\gamma = 0$. 

	For the final term in \eqref{firstestimate}, we then obtain from \eqref{eq: er est 1}-\eqref{eq: er est 3}:
	\begin{align*}
		( p - \Pi_{\mathscr{Q}_h} p, &\ D \cdot [\epsilon \bm{v}_h, \hat{\epsilon} \mu_h])_{\Omega} \nonumber\\
		\lesssim&\ \| p - \Pi_{\mathscr{Q}_h} p \|_{L^2(\Omega)} (\| K^{-\frac{1}{2}} \bm{v}_h \|_{L^2(\Omega)} + \| \gamma^{\frac{1}{2}} K_{\nu}^{-\frac{1}{2}} \mu_h^1 \|_{L^2(\Gamma^1)}) \nonumber\\
		\le&\ \| p - \Pi_{\mathscr{Q}_h} p \|_{L^2(\Omega)} 
		(\| K^{-\frac{1}{2}} (\Pi_{\mathscr{V}_h} \bm{u} - \bm{u}_h) \|_{L^2(\Omega)} + \| K^{-\frac{1}{2}} \delta_1 \bm{u}_{p, h} \|_{L^2(\Omega)} \nonumber\\
		&+\| \gamma^{\frac{1}{2}} K_{\nu}^{-\frac{1}{2}} (\Pi_{\Lambda_h}^1 \lambda^1 - \lambda_h^1) \|_{L^2(\Gamma^1)} + \| \gamma^{\frac{1}{2}} K_{\nu}^{-\frac{1}{2}} \delta_1 \lambda_{p, h}^1 \|_{L^2(\Gamma^1)}) \nonumber\\
		\le&\ \left(\frac{1}{2\delta_4} + \frac{1}{2 \delta_5} + 1 \right) \| p - \Pi_{\mathscr{Q}_h} p \|_{L^2(\Omega)}^2 \nonumber\\
		&+ \frac{\delta_4}{2} \| K^{-\frac{1}{2}} \Pi_{\mathscr{V}_h} \bm{u} - \bm{u}_h \|_{L^2(\Omega)}^2 
		+ \frac{1}{2} \delta_1^2 \| K^{-\frac{1}{2}} \bm{u}_{p, h} \|_{L^2(\Omega)}^2 \nonumber \\
		&+ \frac{\delta_5}{2} \| \gamma^{\frac{1}{2}} K_{\nu}^{-\frac{1}{2}} (\Pi_{\Lambda_h}^1 \lambda^1 - \lambda_h^1) \|_{L^2(\Gamma^1)}^2
		+ \frac{1}{2} \delta_1^2 \| \gamma^{\frac{1}{2}} K_{\nu}^{-\frac{1}{2}} \lambda_{p, h}^1 \|_{L^2(\Gamma^1)}^2).
	\end{align*}

	We collect the above and set the parameters $\delta_2$ to $\delta_5$ sufficiently small. In turn, \eqref{eq: u_p,h ineq} and a sufficiently small $\delta_1$ then give us the estimate
	\begin{align*}
		\| K^{-\frac{1}{2}} (\Pi_{\mathscr{V}_h} \bm{u} - \bm{u}_h) \|_{L^2(\Omega)}^2
		&+ \| \gamma^{\frac{1}{2}} K_{\nu}^{-\frac{1}{2}} (\Pi_{\mathit{\Lambda}_h} \lambda - \lambda_h) \|_{L^2(\Gamma)}^2
		+ \| \Pi_{\mathscr{Q}_h} p - p_h \|_{\mathscr{Q}_h}^2 \nonumber\\
		&\lesssim \| [\bm{u} - \Pi_{\mathscr{V}_h} \bm{u}, \lambda - \Pi_{\mathit{\Lambda}_h} \lambda] \|_{\mathscr{X}_h}^2
		+ \| p - \Pi_{\mathscr{Q}_h} p \|_{L^2(\Omega)}^2.
	\end{align*}
	Thus, with \eqref{eq: perfect div}, the triangle inequality, and the properties from \eqref{eqs proj est}, we have shown convergence of order $k+1$ as stated in the following theorem.
	\begin{theorem}[Convergence] \label{thm: convergence}
		Let $(\bm{u}_0, \lambda, p)$ solve \eqref{eq: weakform} and denote $\bm{u} = \bm{u}_0 + \mathcal{R}\lambda$. Analogously, let $(\bm{u}_{0, h}, \lambda_h, p_h)$ solve \eqref{weakform_h} and denote $\bm{u}_h = \bm{u}_{0, h} + \mathcal{R}_h\lambda_h$. Then, given a quasi-uniform grid, the norms from \eqref{Pnorm} and \eqref{norm on u}, and the Fortin interpolators from \eqref{eqs proj est}, the following error estimate holds
		\begin{align}
			\| [\bm{u} - \bm{u}_h, \lambda - \lambda_h] \|_{\mathscr{X}_h}
			+&\ \| p - p_h \|_{\mathscr{Q}_h} \nonumber\\
			&\lesssim \| [\bm{u} - \Pi_{\mathscr{V}_h} \bm{u}, \lambda - \Pi_{\mathit{\Lambda}_h} \lambda] \|_{\mathscr{X}_h}
			+ \| p - \Pi_{\mathscr{Q}_h} p \|_{L^2(\Omega)} \nonumber\\
			&\lesssim h^{k + 1} (\| \bm{u} \|_{k+1,\Omega} 
						+ \| \nabla \cdot \bm{u} \|_{k+1,\Omega} 
						+ \| \lambda \|_{k+1,\Gamma}
						+ \| p \|_{k+1, \Omega}).
		\end{align}
	\end{theorem}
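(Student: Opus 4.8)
The plan is to prove the first (quasi-best-approximation) inequality by a Galerkin error-equation argument, and then read off the stated $h^{k+1}$ rate from the interpolation estimates \eqref{eqs proj est}. First I would test both the continuous problem \eqref{eq: weakform} and the discrete problem \eqref{weakform_h} against a discrete triple $(\bm{v}_h, \mu_h, q_h) \in \mathscr{V}_{0,h} \times \mathit{\Lambda}_h \times \mathscr{Q}_h$ and subtract, arriving at the error identity \eqref{sys subtracted}. The decisive structural observation is that the choice $q_h = \Pi_{\mathscr{Q}_h} D \cdot [\epsilon(\bm{u} - \bm{u}_h), \hat\epsilon(\lambda - \lambda_h)]$ annihilates the projected divergence of the flux error, giving \eqref{eq: perfect div}; this is precisely the strong (projected) mass-conservation property of the scheme and it removes the divergence contribution of the error from further consideration.

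Next I would insert the projections $\Pi_{\mathscr{V}_h}\bm{u}$, $\Pi_{\mathit{\Lambda}_h}\lambda$, and $\Pi_{\mathscr{Q}_h}p$ into the error equation, keeping the discrete errors (projection minus discrete solution) on the left and moving the approximation errors (projection minus true solution) to the right. To gain control over the pressure error I would invoke the discrete inf-sup condition of Lemma~\ref{lem: inf-sup b}, which produces a pair $(\bm{u}_{p,h}, \lambda_{p,h})$ associated with $\Pi_{\mathscr{Q}_h} p - p_h$ satisfying \eqref{eq: u_p,h eq} and \eqref{eq: u_p,h ineq}. Testing with $\bm{v}_h = \Pi_{\mathscr{V}_h}\bm{u} - \bm{u}_h + \delta_1 \bm{u}_{p,h}$, $\mu_h = \Pi_{\mathit{\Lambda}_h}\lambda - \lambda_h + \delta_1 \lambda_{p,h}$, and $q_h = \Pi_{\mathscr{Q}_h} p - p_h$, and applying Cauchy--Schwarz and Young inequalities with parameters $\delta_1,\dots,\delta_5$, leads to the intermediate estimate \eqref{firstestimate}.

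The main obstacle is bounding the consistency-type term $(p - \Pi_{\mathscr{Q}_h}p, D\cdot[\epsilon\bm{v}_h, \hat\epsilon\mu_h])_{\Omega}$, where the spatially varying aperture prevents a direct appeal to $L^2$-orthogonality of the pressure projection. Here I would expand $\nabla\cdot\epsilon\bm{v}_h = (\nabla\epsilon)\cdot\bm{v}_h + \epsilon\nabla\cdot\bm{v}_h$ and treat three pieces. The first, $(p-\Pi_{\mathscr{Q}_h}p, (\nabla\epsilon)\cdot\bm{v}_h)_{\Omega}$, is controlled by $|\nabla\epsilon|\lesssim\epsilon^{\frac{1}{2}}$ from \eqref{grad eps bound} together with the positive-definiteness of $K$. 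For $(p-\Pi_{\mathscr{Q}_h}p, \epsilon\nabla\cdot\bm{v}_h)_{\Omega}$ I would subtract the piecewise-constant approximation $\epsilon_h$, so that $\epsilon_h\nabla\cdot\bm{v}_h \in \mathscr{Q}_h$ is annihilated by $(I-\Pi_{\mathscr{Q}_h})$, and then absorb the residual $(\epsilon-\epsilon_h)\nabla\cdot\bm{v}_h$ using the $L^\infty$ approximation property of $\epsilon_h$, an inverse inequality, and once more \eqref{grad eps bound}. The jump piece $(p-\Pi_{\mathscr{Q}_h}p, \llbracket\hat\epsilon\mu_h\rrbracket)_{\Omega}$ vanishes whenever $\hat\epsilon$ is constant, owing to $\llbracket\mathit{\Lambda}_h\rrbracket = \mathscr{Q}_h$ in \eqref{Q_h equals jump Lambda_h}; the sole surviving case $d=1$, $n=3$ I would handle directly via the codimension scaling $\hat\epsilon^1\eqsim\epsilon^2\eqsim\gamma^{\frac{1}{2}}$, which recasts $\|\hat\epsilon\mu_h\|$ as the weighted mortar quantity $\|\gamma^{\frac{1}{2}}K_\nu^{-\frac{1}{2}}\mu_h\|$.

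Finally I would collect these bounds and choose $\delta_2,\dots,\delta_5$ small enough to absorb the discrete-error terms appearing on the right back into the left-hand side, and then $\delta_1$ small enough to close the pressure estimate through \eqref{eq: u_p,h ineq}. This yields the quasi-best-approximation inequality controlling $\|[\Pi_{\mathscr{V}_h}\bm{u} - \bm{u}_h, \Pi_{\mathit{\Lambda}_h}\lambda - \lambda_h]\|_{\mathscr{X}_h}$ and $\|\Pi_{\mathscr{Q}_h}p - p_h\|_{\mathscr{Q}_h}$ by the projection errors. The theorem then follows by the triangle inequality, the identity \eqref{eq: perfect div}, and the interpolation estimates \eqref{eqs proj est}, which furnish the optimal order $h^{k+1}$.
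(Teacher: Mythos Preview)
Your proposal is correct and follows essentially the same route as the paper: the error identity \eqref{sys subtracted}, the projected-divergence observation \eqref{eq: perfect div}, insertion of the projections, the inf-sup pair $(\bm{u}_{p,h},\lambda_{p,h})$ with the same test functions and Young-parameter bookkeeping, and the identical three-part treatment of the consistency term $(p-\Pi_{\mathscr{Q}_h}p, D\cdot[\epsilon\bm{v}_h,\hat\epsilon\mu_h])_\Omega$ via \eqref{grad eps bound}, the $\epsilon_h$-plus-inverse-inequality trick, and the $d=1$, $n=3$ scaling argument. The concluding triangle inequality and appeal to \eqref{eqs proj est} match the paper as well.
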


\section{Numerical Results}
\label{sec:numerical_results}

	To confirm the theory derived in the previous sections, we show two sets of numerical results using test cases designed to highlight some of the typical challenges associated with fracture flow simulation. First, we introduce a setup in two dimensions and describe the included features with their associated parameters, followed by an evaluation of the results. This example includes a fracture tip gradually decreasing to zero, thus indicating that \eqref{gamma_0} may not be a necessary condition. Next, a three-dimensional problem is considered which provides an accessible illustration of the dimensional decomposition full dimensional decomposition. 

\subsection{Two-Dimensional Problem}
\label{sub:2D}

	For the two-dimensional test case, the domain $\Omega$ is the unit square. A unit pressure drop is simulated by imposing a Dirichlet boundary condition for the pressure at the top and bottom boundaries of $\Omega$. On the remaining sides, a no-flow boundary condition is imposed. For simplicity, the source function $f$ is set to zero.
	
	Multiple fractures with different properties are added to $\Omega$ to show the extent of the capabilities of the method. Figure~\ref{fig: Domain} (Left) gives an illustration of these fractures. First, the domain contains a fracture intersection. Modeling intersections is non-trivial for a variety of fracture flow schemes and typically calls for special considerations (see e.g. \cite{Formaggia,Scotti_intersect,Sandve2,Schwenck2015}). In contrast, for the method we present, the implementation of an intersection follows naturally due to the dimensional decomposition. Although the test case merely contains a single intersection, this can easily be extended.

	In addition to the intersection, a second aspect is the termination of fractures. The method is designed to handle these endings well, which is investigated by introducing immersed and half-immersed fractures as well as fractures crossing through the boundary as shown in Figure~\ref{fig: Domain} (Left). As suggested in Subsection~\ref{sub:governing_equations}, a fracture ending can either be modeled by ending the feature and setting a zero-flux boundary condition or letting the aperture decrease to zero. Both models are included here, applied to the lower and upper horizontal fractures, respectively.

	By setting the aperture to zero, a virtual extension is created which may be desirable for computational reasons. Due to the close relation to mortar methods, a virtual fracture can serve as an interface between two subdomains with non-matching grids, thus creating a domain decomposition method. By setting the aperture to zero, tangential flow is naturally eliminated and the method simplifies to a mortar scheme with the normal flux as the mortar variable. For our test case, the region where this occurs is illustrated by a dashed line in Figure~\ref{fig: Domain}.

	Furthermore, we investigate the handling of non-matching grids by independently meshing the two sides of all fractures, illustrated by Figure~\ref{fig: Domain} (Right). The mortar mesh is then chosen sufficiently coarse in order to meet requirement \eqref{eq: CR}.

	\begin{figure}[thbp]
		\centering
		\includegraphics[width = 10 cm]{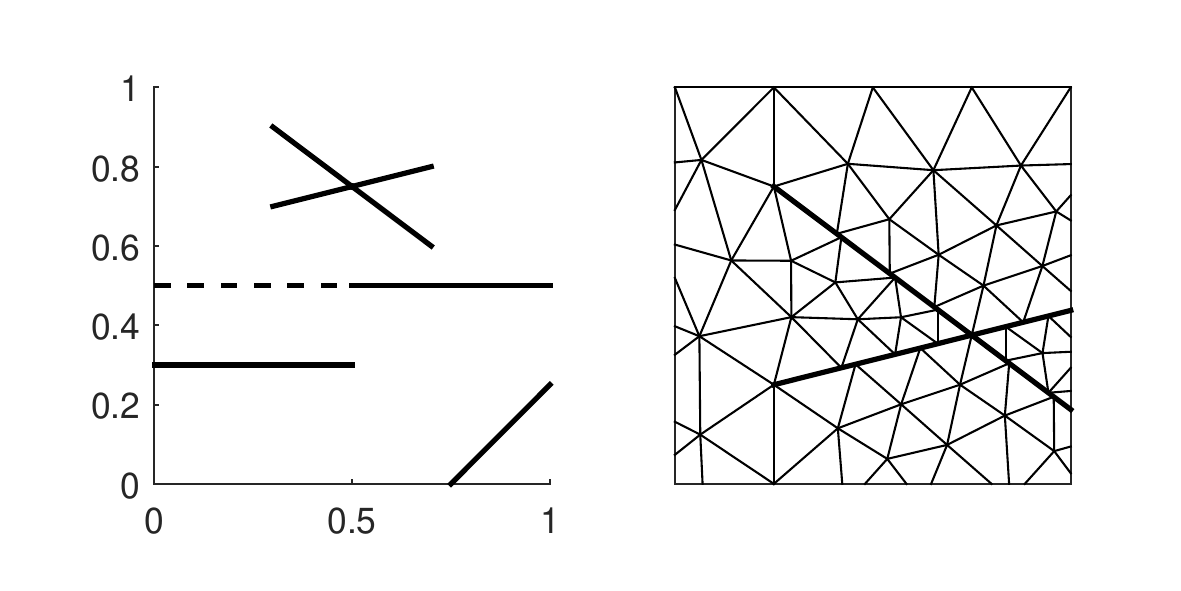}
		\caption{(Left) The domain contains an intersection, multiple fracture endings, a fracture passing through the domain and a virtual extension of a fracture represented by the dashed line. (Right) The grid is non-matching along all fractures, including the sections with zero aperture.}
		\label{fig: Domain}
	\end{figure}

	Let us continue by defining the parameters for the test cases. First, we assume isotropic permeability in $\Omega^2$ and set $K^2$ as the $2 \times 2$ identity tensor. The different included fractures are given different material properties, given in Table~\ref{table:parameters}. 
	The aperture $\gamma$ is chosen as a constant in all fractures except for the central horizontal feature $\Omega_7^1$, which has zero aperture for $x_1 \le 0.5$ and we let the aperture increase for $x_1 > 0.5$ subject to the constraint on the gradient from \eqref{grad eps bound}. The precise formula is given in Table~\ref{table:parameters}.
	Fractures with high permeabilities are expected to stimulate flow whereas a low permeability leads to blocking features. 

	For this example, we assume that $K_\nu^0$, i.e. the permeability in the intersection point, is given. It is possible to define this permeability differently on each interface between fracture and intersection depending on the permeabilities of the attached fractures. Alternatively, a single value can be prescribed, yet this will rely heavily on the modeling assumptions. Here, we omit such procedures in order to present the scheme in the most general setting.

	\begin{table}[thbp]
		\centering 
		\caption{The coordinates and parameters associated with the lower-dimensional domains.}
		\label{table:parameters}
		\begin{footnotesize}\begin{tabular}{ccccl} 
		\hline 
		& $x_{start}$ & $x_{end}$ & $K, K_\nu$ & $\gamma$ \\
		\hline
		$\Omega_1^0$ & (0.5, 0.75) & 			& 100 &0.01 \\[1pt]
		$\Omega_1^1$ & (0.5, 0.75) & (0.7, 0.8) & 100 &0.01 \\[1pt]
		$\Omega_2^1$ & (0.5, 0.75) & (0.3, 0.9) & 100 &0.01 \\[1pt]
		$\Omega_3^1$ & (0.5, 0.75) & (0.3, 0.7) & 100 &0.01 \\[1pt]
		$\Omega_4^1$ & (0.5, 0.75) & (0.7, 0.6) & 100 &0.01 \\[1pt]
		$\Omega_5^1$ & (0.75, 0) & (0.5, 0.75) & 100 &0.01 \\[1pt]
		$\Omega_6^1$ & (0, 0.3) & (0.5, 0.3) & 0.01 &0.01 \\[1pt]
		$\Omega_7^1$ & (0, 0.5) & (1, 0.5) & 0.01 & $0.01(2\max(x_1 - 0.5, 0))^4$ \\
		\hline 
		\end{tabular}\end{footnotesize}
	\end{table}

\subsubsection{Qualitative Results}
\label{sub:qualitative_results}
	
	The results for the two-dimensional test case introduced above are shown in Figure~\ref{fig: Results} with the use of lowest order Raviart-Thomas elements for the flux and piecewise constants for the mortar and pressure variables (see \eqref{Lowest order choice}). As expected, the results are free of oscillations and neither the fracture endings, intersection, or non-matching grid cause problems for the scheme. Moreover, the solution is qualitatively in accordance with the physically expected results.

	Most notably, we observe the effects on the pressure distribution related to the prescribed permeabilities and apertures. High permeabilities enforce a nearly continuous pressure, which is clearly shown both between the fracture and matrix pressures, but similarly between the fracture and intersection pressure, represented by a dot. On the other hand, the two regions with low permeabilities result in a pressure discontinuity across the fracture. Recall that the abrupt fracture ending calls for a no-flux boundary condition, whereas a gradual decrease in aperture naturally stops the flow beyond the closure point of the fracture. From the pressure and flux distributions in Figure~\ref{fig: Results}, it is clear that these two different models for fracture endings lead to different behavior of the solution. In particular, the solution is visibly less regular around the abrupt fracture ending compared to the region where a fracture pinches out. Thus, the result emphasizes the impact of abrupt fracture endings relative to gradual pinch-outs for low permeabilities. 

	As an additional comment, we have also investigated fracture pinch-outs which violate \eqref{grad eps bound}. In this case, minor oscillations are seen near the fracture tip, verifying that inequality \eqref{grad eps bound} is a necessary condition not just for the analysis, but also for the method. 

	\begin{figure}[thbp]
		\centering
		\includegraphics[width = 12 cm]{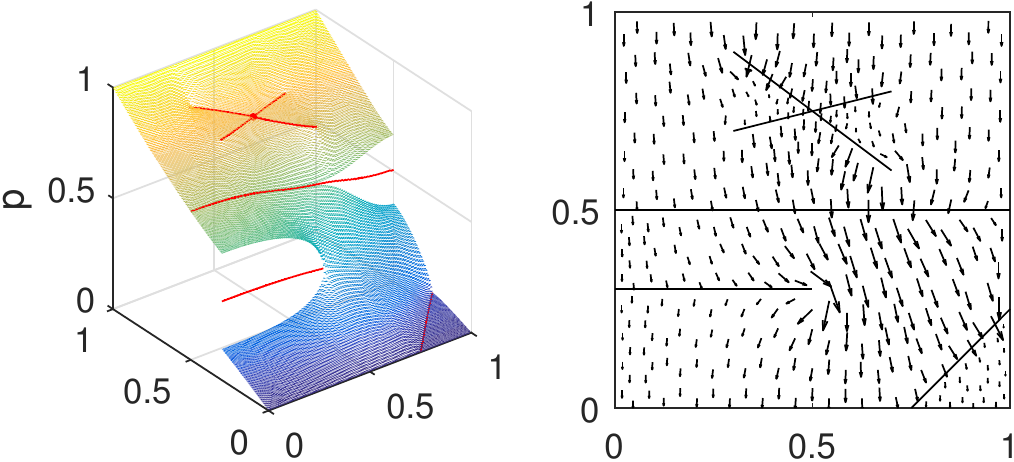}
		\caption{(Left) The pressure distribution for the two-dimensional test case. The effects of abrupt fracture endings as opposed to gradual closure of fractures is apparent around the tips of the blocking features. Continuity of the solution is visible where the aperture equals zero. (Right) The flow uses conducting fractures as preferential flow paths whereas it is forced around the features with low permeability.}
		\label{fig: Results}
	\end{figure} 

\subsubsection{Convergence}
\label{sub:convergence}

	According to the theory, we expect to see linear convergence in all variables for the lowest order choice of spaces described by \eqref{Lowest order choice} with $k = 0$. To verify this, numerical experiments were performed on five consecutively refined grids. All solutions were then compared to the solution on the finest grid.

	Let us continue by describing the norms used in this comparison, starting with the flux variables. These fluxes have irregular behavior around fracture tips resulting in a loss of convergence rates in these regions. For that reason, we exclude balls with some small radius $\rho>0$ centered at the fracture tips, denoted by $\mathfrak{B}_{\rho}$. 	For this test case, it has been found sufficient to set $\rho = 0.02$.
	We emphasize that the flux variable is given by $\bm{u}_h = \bm{u}_{0, h} + \mathcal{R}_h \lambda_h$, i.e. the full flux is compared in accordance with the theory from Subsection~\ref{sub:a_priori_estimates}.
	Moreover, \eqref{eq: perfect div} shows there is no error in the divergence of the flux when comparing the discrete to the continuous solution using the norms from \eqref{norm on u}. Therefore, we consider convergence in the following, appropriately scaled norms:
	\begin{subequations}
		\begin{align}
			| \bm{v} |_{\mathscr{V}} &= \| K^{-1/2} \bm{v} \|_{L^2(\Omega \backslash \mathfrak{B}_{\rho})}, & \bm{v} &\in \mathscr{V}, \\
			|\mu|_{\mathit{\Lambda}} &= \| \gamma^{\frac{1}{2}} K_{\nu}^{-\frac{1}{2}} \mu\|_{L^2(\Gamma)}, & \mu &\in \mathit{\Lambda}, \\
			|q|_{\mathscr{Q}} &= \| \hat{\epsilon}_{\max} q\|_{L^2(\Omega)}, & q &\in \mathscr{Q}.
		\end{align}
	\end{subequations}
	
	The errors and convergence results are shown in Table~\ref{table:Conv}. On average, we observe linear convergence in all variables, confirming the theory. For $d=0$, which corresponds to a point evaluation of the solution, the accuracy becomes dependent on the particular grid near the intersection, and while the general trend is first-order convergence, the particular rates for this example appear erratic. 
	\begin{table}[ht]                                              
	\centering                                                     
	\caption{Relative errors and convergence rates on a grid with typical mesh size $h_{\text{coarse}}$ and consecutively refined grids.}
	\label{table:Conv}                                             
	\begin{footnotesize}
    \begin{tabular}{l|l|cc|cc|cc}
    	\hline
    	& & \multicolumn{2}{c|}{d=0} & \multicolumn{2}{c|}{d=1}
    	 & \multicolumn{2}{c}{d=2} \\
    	& $h/h_{\text{coarse}}$ & Error & Rate & Error & Rate & Error & Rate \\
    	\hline                                                        
    	\multirow{4}{*}{$\bm{u}_h$}		& $2^{0}$ &   &   & 1.40e-01 &   & 1.10e-01 &   \\                 
    			& $2^{-1}$ &   &   & 6.84e-02 & 1.04 & 7.07e-02 & 0.64 \\           
    			& $2^{-2}$ &   &   & 3.17e-02 & 1.11 & 3.19e-02 & 1.15 \\           
    			& $2^{-3}$ &   &   & 1.21e-02 & 1.39 & 1.39e-02 & 1.19 \\  \hline          
    	\multirow{4}{*}{$\lambda_h$}		& $2^{0}$ & 5.46e-02 &   & 1.56e-01 &   &   &   \\                 
    			& $2^{-1}$ & 1.47e-02 & 1.90 & 8.36e-02 & 0.90 &   &   \\           
    			& $2^{-2}$ & 3.74e-03 & 1.97 & 4.32e-02 & 0.95 &   &   \\           
    			& $2^{-3}$ & 1.94e-03 & 0.95 & 2.06e-02 & 1.07 &   &   \\ \hline           
    	\multirow{4}{*}{$p_h$}		& $2^{0}$ & 9.63e-05 &   & 1.04e-02 &   & 2.44e-02 &   \\          
    			& $2^{-1}$ & 4.43e-06 & 4.44 & 4.96e-03 & 1.07 & 1.21e-02 & 1.01 \\ 
    			& $2^{-2}$ & 1.40e-05 & -1.66 & 2.40e-03 & 1.05 & 5.87e-03 & 1.04 \\
    			& $2^{-3}$ & 5.88e-06 & 1.26 & 1.04e-03 & 1.21 & 2.59e-03 & 1.18 \\ 
    	\hline                                                         
	\end{tabular}  \end{footnotesize}                                                
	\end{table} 

	\subsection{Three-Dimensional Problem} 
	\label{sub:three_dimensional_problem}

	The model problem presented in this section is specifically chosen to illustrate the dimensional decomposition in three dimensions. The domain $\Omega$ is constructed by starting with the unit cube and introducing three planar fractures defined by $x_1 = 0.5$, $x_2 = 0.5$, and $x_3 = 0.5$, respectively.

	The dimensional decomposition of $\Omega$ as described in Section~\ref{sub:geometric_representation} is then performed as follows. The fractures split the domain into 8 smaller cubes whose union defines $\Omega^3$. The domain $\Omega^2$ is defined as the union of the fractures excluding the intersection lines (i.e. $\Omega^2$ consists of 12 planes). Next, the union of the 6 intersection lines, after exclusion of the intersection point, forms $\Omega^1$. Finally, the single intersection point with coordinates $(0.5,0.5,0.5)$ defines $\Omega^0$. To conclude, $\Gamma$ is defined as the union of all interfaces between subdomains of codimension one.

	To close the problem, the following boundary conditions are introduced. The pressure is given at the top and bottom by the function $g(\bm{x}) = x_3 \left(x_1^2 + x_2 \right)$. A no-flux condition is set on the remaining boundaries. All fracture planes and lines touching the boundary $\partial \Omega$ naturally inherit these conditions.

	The parameters for this test case are chosen such that the problem reflects conducting fractures. Specifically, we set $K^3 = 1$ as the matrix permeability, $K^d = K_{\nu}^d = 100$ for $0 \le d \le 2$, and the aperture as $\gamma = 0.01$ for all lower-dimensional domains. The simplicial meshes generated for this problem are matching along all intersections and thus, a matching mortar mesh is employed. The discretized problem is implemented with the use of FEniCS \cite{LoggMardalEtAl2012a}.

	Due to the lack of immersed fracture tips in the proposed domain, no special considerations are needed and each variable is expected to converge linearly for all values of $d$. The numerical results displayed by Table~\ref{tab:Rates3DRegular} confirm these expectations. Once again, the solution on a finer grid is used to serve as the true solution.

	\begin{table}[thbp]
		\caption{Relative errors and convergence rates for the 3D problem. The results show that each variable has (at least) first order convergence in each dimension.}
		\label{tab:Rates3DRegular}
		\centering
		\begin{footnotesize}\begin{tabular}{l|l|cc|cc|cc|cc}
		\hline
		& & \multicolumn{2}{c|}{d=0} & \multicolumn{2}{c|}{d=1}
		 & \multicolumn{2}{c|}{d=2} & \multicolumn{2}{c}{d=3} \\
		& h & Error & Rate & Error & Rate & Error & Rate & Error & Rate \\
		\hline
			 			& $2^{-1}$ & &&  1.46e-01 &  &   3.50e-01 &  &   2.76e-01 &  \\
		$\bm{u}_h$	& $2^{-2}$ & &&  4.62e-02 & 1.66 &   1.97e-01 & 0.83 &   1.56e-01 & 0.83 \\
			 			& $2^{-3}$ & &&  1.31e-02 & 1.81 &   9.76e-02 & 1.02 &   7.76e-02 & 1.00 \\
		\hline	
			 			& $2^{-1}$ &   2.24e-01 &  &   2.15e-01 &  &   1.94e-01 &  \\
		$\lambda_h$	 	& $2^{-2}$ &   1.71e-02 & 3.71 &   9.93e-02 & 1.12 &   1.07e-01 & 0.86 \\
			 			& $2^{-3}$ &   5.96e-03 & 1.52 &   4.30e-02 & 1.21 &   5.60e-02 & 0.93 \\
		\hline
			 			& $2^{-1}$ &   4.51e-02 &  &   1.55e-01 &  &   1.50e-01 &  &   1.36e-01 &  \\
		$p_h$	 			& $2^{-2}$ &   7.11e-03 & 2.67 &   7.40e-02 & 1.07 &   7.15e-02 & 1.07 &   6.69e-02 & 1.02 \\
			 			& $2^{-3}$ &   1.49e-03 & 2.25 &   3.29e-02 & 1.17 &   3.17e-02 & 1.17 &   3.06e-02 & 1.13 \\
		\hline
		\end{tabular}\end{footnotesize}
	\end{table}

	To visualize the solution obtained in this test case, Figure~\ref{fig: 3DResults} shows the pressure distribution and the two-dimensional fluxes, i.e. the fluxes tangential to the fractures. Due to the parameters and boundary conditions, the solution exhibits a dominant flow through the conductive fractures from top to bottom. 

	\begin{figure}[thbp]
		\centering
		\includegraphics[width = 0.8\textwidth]{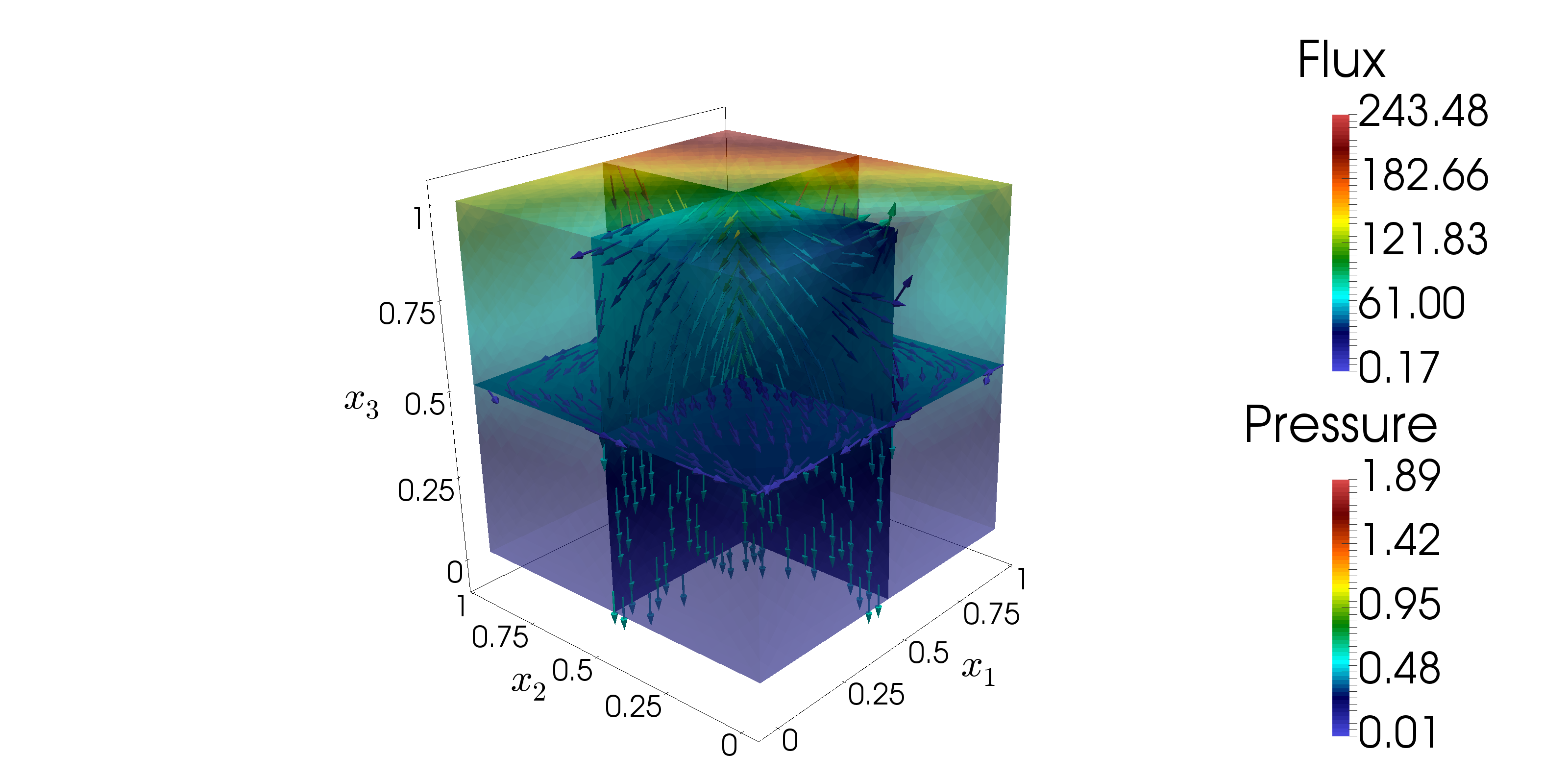}
		\caption{The pressure distribution in the regular three-dimensional case superimposed on the two-dimensional flux field. The solution is qualitatively consistent with expectations for a problem with conducting fractures.}
		\label{fig: 3DResults}
	\end{figure} 

\section{Conclusion}
\label{sec:conclusion}

	In this work, we proposed a mixed finite element method for Darcy flow problems in fractured porous media. The use of flux mortars in a mixed method results in a mass conservative scheme which is able to handle non-matching grids. The key novel components of the method are the hierarchical approach obtained after subdividing the domain in a dimensional manner, as well as the use of dimensionally composite function spaces to analyze the problem with respect to stability and a priori error estimates. Our analysis shows the method is robust and convergent allowing for varying, and arbitrary small apertures. Numerical results confirm the theory, and furthermore show that the constraint on the degeneracy of the normal permeability used in the analysis may not be needed in practice. 

\section*{Acknowledgments}
\label{sec:acknowledgments}

	The authors wish to thank Inga Berre, Sarah Gasda, and Eirik Keilegavlen for valuable comments and discussions on this topic. The research is funded in part by the Norwegian Research Council grants: 233736, 250223. The third author was partially supported by the US DOE grant DE-FG02-04ER25618 and NSF grant DMS 1418947.

\bibliography{reportbib}
	
\end{document}